\documentclass{article}%

\usepackage{amsmath}%
\usepackage{amsfonts}%
\usepackage{amssymb}%
\usepackage{amsthm}
\usepackage{graphicx}
\usepackage{a4wide}
\usepackage{todonotes}
\usepackage[all,cmtip]{xy}
\usepackage{ytableau}
\usepackage{hyperref}
\usepackage[utf8]{inputenc}
\usepackage{enumitem}

\newtheorem{theorem}{Theorem}
\numberwithin{theorem}{section}

\newtheorem{corollary}[theorem]{Corollary}

\newtheorem{lemma}[theorem]{Lemma}

\newtheorem{proposition}[theorem]{Proposition}

\theoremstyle{definition}

\newtheorem{remark}[theorem]{Remark}

\newcommand{\Hg} {\operatorname{Hg}}
\newcommand{\MT} {\operatorname{MT}}
\newcommand{\GO} {\operatorname{GO}}
\newcommand{\GL} {\operatorname{GL}}
\newcommand{\SL} {\operatorname{SL}}
\newcommand{\CSpin} {\operatorname{CSpin}}
\newcommand{\Spin} {\operatorname{Spin}}

\newcommand{\abGal}[1] {\operatorname{Gal}\big(\overline{#1}/#1\big)}

\begin{document}

\title{Non-isogenous abelian varieties sharing the same division fields}
\author{Davide Lombardo}
\date{}
\maketitle

\begin{abstract}
Two abelian varieties $A_1, A_2$ over a number field $K$ are \textit{strongly iso-Kummerian} if the torsion fields $K(A_1[d])$ and $K(A_2[d])$ coincide for all $d \geq 1$.
For all $g \geq 4$ we construct geometrically simple, strongly iso-Kummerian $g$-dimensional abelian varieties over number fields that are not geometrically isogenous. We also discuss related examples and put significant constraints on any further iso-Kummerian pair.
\end{abstract}

\section{Introduction}
The present note is motivated by the following question of Kowalski (see \cite[Problem 1.2 and Remark 3.5]{MR1981599}).
Let $A_1, A_2$ be two abelian varieties over a number field $K$ and suppose that
\begin{equation}\label{eq:Question}
K(A_1[d]) = K(A_2[d]) \text{ holds for all }d \geq 1.
\end{equation}
What can be said about $A_1, A_2$? A related question was also asked by Zywina during his talk at the conference `Abelian Varieties \& Galois Actions', held at the University of Pozna\'n in 2017.

Inspired by the terminology in \cite{MR1981599}, we say that two abelian varieties $A_1, A_2$ defined over a common number field $K$ are \textit{strongly iso-Kummerian} if condition \eqref{eq:Question} is satisfied.
Our purpose is to provide examples of strongly iso-Kummerian abelian varieties $A_1, A_2$ that share no common isogeny factors (even over $\overline{K}$), and to show that in many cases such examples are explained by the presence of a CM field in the endomorphism algebra.
For the first of these objectives, we show how a construction based on the theory of complex multiplication yields examples of strongly iso-Kummerian abelian varieties of almost all positive dimensions for which \eqref{eq:Question} is not explained by isogenies:
\begin{theorem}[Section \ref{sect:CMCounterexamples}]\label{thm:MainIntro}
Let $g \geq 4$ be an integer. There exist infinitely many (geometrically non-isomorphic) pairs of strongly iso-Kummerian abelian varieties $A_1/K, A_2/K$, where $K$ is a number field, with $A_1, A_2$ geometrically simple and not isogenous over $\overline{K}$, and such that $\dim(A_1)=\dim(A_2)=g$.
\end{theorem}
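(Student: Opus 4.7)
The plan is to use the theory of complex multiplication: for $g \geq 4$, I would construct $A_1$ and $A_2$ as CM abelian varieties defined over a common number field $K$, both with CM by the same CM field $E$ of degree $2g$, but with carefully chosen inequivalent CM types $\Phi_1$ and $\Phi_2$. To this end I would take $E$ to be a non-Galois CM field of degree $2g$ whose only automorphisms are the identity and complex conjugation $c$, whose Galois closure $\widetilde{E}$ admits a rich Galois action on the set of CM types of $E$, and which has no proper CM subfield (forcing all CM types on $E$ to be primitive). The two CM types $\Phi_1, \Phi_2$ would be selected so as to lie in the same orbit under $\operatorname{Gal}(\widetilde{E}/\mathbb{Q})$ --- ensuring that they share the same reflex field $E^* \subset K$ --- but to lie in distinct orbits under $\operatorname{Aut}(E)$, which will guarantee that the associated CM pairs $(E,\Phi_i)$ are non-isomorphic and hence that $A_1, A_2$ are not isogenous over $\overline{K}$.

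Having produced the varieties by descending the complex tori $\mathbb{C}^g / \Phi_i(\mathcal{O}_E)$ to a suitable $K$, the iso-Kummerian property is to be verified via the Main Theorem of Complex Multiplication (Shimura--Taniyama). For each $i$ this describes the Galois representation of $\operatorname{Gal}(\overline{K}/K)$ on the Tate module of $A_i$ as the composition of the Artin reciprocity map with the reflex norm $N_{\Phi_i^*} \colon (E^*)^\times \to E^\times$ attached to the reflex CM type, producing torsion characters $\chi_i \colon \operatorname{Gal}(\overline{K}/K) \to (\mathcal{O}_E \otimes \widehat{\mathbb{Z}})^\times$. The central task --- and the main obstacle in the argument --- is to exhibit choices of $\Phi_1, \Phi_2$ for which $\ker(\chi_1 \bmod n) = \ker(\chi_2 \bmod n)$ for every $n \geq 1$, so that $K(A_1[n]) = K(A_2[n])$. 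I would attempt this by showing that, even though $\Phi_1$ and $\Phi_2$ are inequivalent, the corresponding reflex norms differ by a group-theoretic symmetry of the target torus $(\mathcal{O}_E \otimes \widehat{\mathbb{Z}})^\times$ that preserves each finite-index congruence subgroup $(\mathcal{O}_E/n\mathcal{O}_E)^\times$, so that the induced images and kernels coincide at every finite level.

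Once the iso-Kummerian property is in place, the remaining assertions follow cleanly: geometric simplicity of $A_1$ and $A_2$ is a consequence of the primitivity of the chosen CM types, and infinitely many non-isomorphic pairs of dimension $g$ can be produced by varying $E$ within an infinite family of non-Galois CM fields of degree $2g$ satisfying the above constraints, since the CM field is an invariant of the $\overline{K}$-isogeny class and thus distinct $E$ yield geometrically non-isomorphic pairs. The restriction $g \geq 4$ reflects the fact that the combinatorial structure of CM types on degree-$2g$ CM fields is too rigid for smaller $g$ to accommodate such a pair $(\Phi_1, \Phi_2)$: for $g \leq 3$ the interplay between the action of $\operatorname{Gal}(\widetilde{E}/\mathbb{Q})$, the complex-conjugation constraint, and the small size of $\operatorname{Aut}(E)$ leaves no room to produce inequivalent CM types sharing the same tower of division fields.
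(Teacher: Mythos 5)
Your overall strategy — CM abelian varieties with the same CM field $E$ but essentially different primitive CM types, verified via the Shimura--Taniyama description of the $\ell$-adic representations — is the same as the paper's, but several of your specific design choices diverge sharply from what the paper actually does, and the key step of your argument is left as an unsubstantiated claim.

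The paper takes $E$ to be \emph{Galois} over $\mathbb{Q}$: one fixes a totally real cyclic extension $E_0/\mathbb{Q}$ of degree $g$ and a quadratic imaginary field $F$, and sets $E = E_0 F$, so that $\operatorname{Gal}(E/\mathbb{Q}) \cong \mathbb{Z}/g\mathbb{Z} \times \mathbb{Z}/2\mathbb{Z}$ and $E^* = E$ automatically. This maximally large automorphism group of $E$ is exploited: the CM types $\Phi_1, \Phi_2$ (rather, their reflexes $\Phi_1^*, \Phi_2^*$) are written down explicitly in terms of a generator $\psi$ of $\operatorname{Gal}(E/F)$ and a parameter $r=h$ coprime to $g$, and the coincidence of kernels is established by an explicit manipulation --- applying powers of $\psi$ to the kernel condition and using $\gcd(hr,g)=1$ to deduce that an idèle lies in $\ker \rho_i|_H$ if and only if $\psi(\beta_0)=\beta_0$ and $\beta_0^r = N_{K/F}(a_\ell)^{-1}$, conditions that are manifestly independent of $i$. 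You instead deliberately choose a \emph{non-Galois} $E$ with trivial $\operatorname{Aut}(E)$ and CM types lying in the same $\operatorname{Gal}(\widetilde{E}/\mathbb{Q})$-orbit, which is an entirely different (and in fact opposite) configuration: the paper's $\Phi_1,\Phi_2$ are \emph{not} in the same Galois orbit. Note that the ``same orbit'' hypothesis is also in tension with what you want to prove, since CM types related by an element of the full Galois group of the closure tend to produce Galois-conjugate abelian varieties rather than genuinely independent ones.

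The serious gap is your central claim: that one can choose $\Phi_1,\Phi_2$ so that ``the corresponding reflex norms differ by a group-theoretic symmetry of the target torus that preserves each finite-index congruence subgroup.'' You do not say what this symmetry is, and it is hard to see where it could come from once you have excluded nontrivial automorphisms of $E$: an automorphism of $T_E = \operatorname{Res}_{E/\mathbb{Q}}\mathbb{G}_m$ over $\mathbb{Q}$ compatible with the $\mathcal{O}_E$-integral structure essentially amounts to an automorphism of $E$, which by design you do not have. Without producing the symmetry, or an explicit family of CM types with the required property, the heart of the argument is missing. The paper's cyclic-group computation is precisely the content filling this gap. Finally, your explanation for why $g\geq 4$ is needed (``too rigid for smaller $g$'') is a heuristic, not a proof; the paper establishes the sharpness of the bound separately through Proposition~\ref{prop:g3} and Corollary~\ref{cor:DimensionAtMost3} (with a computer-aided case check for CM threefolds), and handles $g=4,6$ --- which the generic construction does not cover --- by exhibiting explicit CM types found by enumeration.
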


The condition $g \geq 4$ is sharp: we show in Corollary \ref{cor:DimensionAtMost3} that all strongly iso-Kummerian pairs with $A_1, A_2$ of dimension at most 3 are explained by isogenies, independently of whether they admit complex multiplication or not.
We also analyse in detail a non-simple example, originally considered by Shioda \cite{Shioda81algebraiccycles} in a slightly different context, which gives two fairly concrete strongly iso-Kummerian abelian varieties over $\mathbb{Q}$ with different isogeny factors over $\overline{\mathbb{Q}}$ (notice that in this example $\dim A_1=4, \dim A_2=3$):
\begin{theorem}[{Theorem \ref{thm:Shioda}}]
Let $J/\mathbb{Q}$ be the Jacobian of the smooth projective curve $C$ with hyperelliptic equation $y^2=x^9-1$. Then $J$ is isogenous over $\mathbb{Q}$ to $X \times E$, where $X$ is an absolutely simple abelian threefold with CM by $\mathbb{Q}(\zeta_9)$ and $E$ is the CM elliptic curve $y^2=x^3-1$. The abelian varieties $X \times E$ and $X$ are strongly iso-Kummerian over $\mathbb{Q}$.
\end{theorem}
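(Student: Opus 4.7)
The first step is to establish the isogeny decomposition $J \sim X \times E$ and identify the CM structures. The curve $C : y^2 = x^9 - 1$ has genus $4$, and the $\mathbb{Q}$-morphism $(x, y) \mapsto (x^3, y)$ from $C$ to the elliptic curve $E : y^2 = x^3 - 1$ exhibits $E$ as an isogeny factor of $J$ defined over $\mathbb{Q}$; I take $X$ to be the complementary $3$-dimensional factor. To pin down the CM structure I pass to $\mathbb{Q}(\zeta_9)$, where the order-$9$ automorphism $\sigma : (x, y) \mapsto (\zeta_9 x, y)$ is defined. On the standard basis $\omega_i = x^i \, dx / y$ ($0 \leq i \leq 3$) of $H^0(C, \Omega^1)$ it acts by $\omega_i \mapsto \zeta_9^{i+1} \omega_i$, giving eigenvalues $\zeta_9, \zeta_9^2, \zeta_9^3, \zeta_9^4$. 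The eigenvalue $\zeta_9^3$ is a primitive cube root of unity and is absorbed by $E$ (whose CM field is $\mathbb{Q}(\zeta_3)$), while $\{1, 2, 4\}$ is the CM type $\Phi_X$ of $X$ on $\mathbb{Q}(\zeta_9)$. Since $\Phi_X$ has trivial stabiliser in $(\mathbb{Z}/9\mathbb{Z})^\times$ it is primitive, so $X$ is geometrically simple with $\operatorname{End}^0(X_{\overline{\mathbb{Q}}}) = \mathbb{Q}(\zeta_9)$.

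For the iso-Kummerian property, the equality $(X \times E)[d] = X[d] \oplus E[d]$ reduces the claim to showing $\mathbb{Q}(E[d]) \subseteq \mathbb{Q}(X[d])$ for every $d \geq 1$. The Weil pairing gives $\mathbb{Q}(\zeta_d) \subseteq \mathbb{Q}(X[d])$, and after base-change to $L := \mathbb{Q}(\zeta_9, \zeta_d) \subseteq \mathbb{Q}(X[d])$ both division-field extensions become abelian over $L$. It therefore suffices to establish $L(E[d]) \subseteq L(X[d])$.

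By Shimura--Taniyama reciprocity, $L(X[d])/L$ and $L(E[d])/L$ are cut out by the Hecke characters $\psi_X$ on $\mathbb{Q}(\zeta_9)$ of infinity type $\Phi_X = \{1, 2, 4\}$ and $\psi_E$ on $\mathbb{Q}(\zeta_3)$ of infinity type $\{1\}$, each pulled back to $G_L$. I would exploit the fact that both admit explicit descriptions as Jacobi-sum Hecke characters associated with the Fermat-type presentation $u^{18} - v^{18} = 1$ of $C$ (obtained from $x = u^2$, $y = v^9$), and apply identities for Gauss and Jacobi sums (Hasse--Davenport, together with the conjugate relation $\psi_X \cdot \overline{\psi_X} = \operatorname{Nm}$) to produce an algebraic identity writing $\psi_E \circ N_{L/\mathbb{Q}(\zeta_3)}$ as a product of Galois-conjugates of $\psi_X$ times a cyclotomic character of conductor dividing some power of $d$. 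Since such cyclotomic characters are trivial on $G_L \subseteq G_{\mathbb{Q}(\zeta_d)}$, this yields $\ker(\psi_X \bmod d) \subseteq \ker(\psi_E \bmod d)$ and hence $L(E[d]) \subseteq L(X[d])$. The main obstacle I anticipate is making this Hecke-character identity completely explicit at the wildly ramified prime $3$ and verifying it at the small primes $\ell = 2, 3$; any residual ambiguity has to be shown to lie in a cyclotomic character already present in $\mathbb{Q}(X[d])$ via the Weil pairing.
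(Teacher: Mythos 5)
Your setup of the isogeny decomposition and the CM type of $X$ matches the paper exactly: the map $(x,y)\mapsto(x^3,y)$ splits off $E$ over $\mathbb{Q}$, and the eigenvalue computation on $H^0(C,\Omega^1)$ identifies $\Phi_X=\{1,2,4\}$, which you correctly observe is primitive. Observing that $(X\times E)[d]=X[d]\oplus E[d]$ tautologically and reducing the claim to $\mathbb{Q}(E[d])\subseteq \mathbb{Q}(X[d])$ is also the right move.

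There are two genuine gaps. First, your reduction to $L(E[d])\subseteq L(X[d])$ with $L=\mathbb{Q}(\zeta_9,\zeta_d)$ silently assumes $\mathbb{Q}(\zeta_9)\subseteq\mathbb{Q}(X[d])$ for every $d$. The Weil pairing gives $\mathbb{Q}(\zeta_d)$, not $\mathbb{Q}(\zeta_9)$. This inclusion is true but requires an argument: the paper invokes Silverberg's theorem on fields of definition of endomorphisms to get $\mathbb{Q}(\zeta_9)\subseteq\mathbb{Q}(X[\ell^n])$ for $\ell^n\geq 3$, and then handles $\ell^n=2$ by an explicit $2$-torsion computation on the hyperelliptic model. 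As written, your argument does not cover $d=2$ at all.

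Second, and more importantly, the central inclusion of division fields is proposed but never proved. You correctly anticipate that a Jacobi-sum / Hasse--Davenport identity should relate $\psi_E\circ N_{L/\mathbb{Q}(\zeta_3)}$ to Galois conjugates of $\psi_X$, and you correctly identify that the delicate points are the bad primes ($2$ and $3$) and the behaviour of the unit characters there; but you then leave precisely those points unresolved, so the argument does not close. The paper instead works idelically from Shimura's theorem (Theorem \ref{thm:CM}): writing $\rho_{X,\ell^\infty}(a)=\varepsilon_X(a)\,N_{\Phi^*}(a_\ell)^{-1}$ with $\Phi^*=\{\sigma_1,\sigma_5,\sigma_7\}$, the paper establishes the key relation $\varepsilon_E=\varepsilon_X^3$ by a purely geometric argument (the map on automorphism groups $\mu(F)\to\mu(\mathbb{Q}(\zeta_3))$ induced by the covering $C\to E$ is $\zeta\mapsto\zeta^3$), and then, starting from the congruence $\varepsilon_X(a_3)\,\sigma_1(a_\ell)^{-1}\sigma_5(a_\ell)^{-1}\sigma_7(a_\ell)^{-1}\equiv 1\pmod{\ell^n}$, applies $\sigma_5$ twice and eliminates to reach $\varepsilon_X(a_3)^{21}\sigma_1(a_\ell)^{-1}\sigma_4(a_\ell)^{-1}\sigma_7(a_\ell)^{-1}\equiv 1$, which, since $\varepsilon_X$ takes values in $18$-th roots of unity, is exactly triviality of $\rho_{E,\ell^n}$. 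This bypasses Gauss-sum computations entirely and is what you would need to supply to complete your sketch; the plan you outline is plausible but would require a separate, rather technical, Jacobi-sum argument to be made rigorous.
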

In Section \ref{sect:EndZ} we also point out how the Kuga-Satake construction may be used to obtain `near miss' examples, where the two sides of the desired equality \eqref{eq:Question} differ at most by quadratic extensions (see Corollary \ref{cor:AlmostStronglyIsokummerian} for a precise statement).

The problem of characterising \textit{all} strongly iso-Kummerian pairs is of course much subtler.
One would wish to understand to which extent the above examples are representative, and to investigate what restrictions strongly iso-Kummerian (and non-isogenous) abelian varieties should satisfy. 
We make some partial progress in this direction by recasting condition \eqref{eq:Question} in the language of Galois representations.
Notice that the Chinese remainder theorem implies that it suffices to check \eqref{eq:Question} when $d$ is a prime power: this suggests to work with the $\ell$-adic representations attached to $A_1, A_2$ for a fixed prime $\ell$, and to rephrase the question in terms of $\ell$-adic monodromy groups. We carry this out in Section \ref{sect:PartialClassification}. This reformulation makes it clear that -- at least conditionally on the Mumford-Tate conjecture -- the problem of classifying 
non-trivial strongly iso-Kummerian pairs is closely related to the question of describing all pairs of
non-isogenous abelian varieties $A_1, A_2$ for which the Hodge group of the product $A_1 \times A_2$ is isomorphic to the Hodge group of each of its factors (see Lemma \ref{lemma:IsoKummerianImplesSameHodgeDimension} and Proposition \ref{prop:MTIsomorphic} for details).
This is a very restrictive condition, as one generically expects the Hodge group of a product to be isomorphic to the product of the Hodge groups of the factors, unless the two factors are isogenous.
In particular, in \cite{MR3494170} the author has given sufficient criteria for the equality $\Hg(A_1 \times A_2) \cong \Hg(A_1) \times \Hg(A_2)$ to hold: the fact that these criteria have wide applicability explains to a certain extent the difficulty of finding non-isogenous abelian varieties that satisfy \eqref{eq:Question}. Based on the results of \cite{MR3494170}, we extend work of Kowalski concerning iso-Kummerian elliptic curves \cite[Theorem 3.4 (4), (5)]{MR1981599} to show that for a large class of abelian varieties all strongly iso-Kummerian pairs are explained by isogenies:
\begin{theorem}\label{thm:IntroClassification}
Let $A_1, A_2$ be strongly iso-Kummerian abelian varieties over a number field $K$. Write $(A_1)_{\overline K} \sim B_1^{e_1} \times \cdots \times B_h^{e_h}$ and $(A_2)_{\overline{K}} \sim C_1^{f_1} \times \cdots \times C_k^{f_k}$ for the $\overline{K}$-isogeny decompositions of $A_1, A_2$ into simple factors. Suppose that one of the following holds:
\begin{enumerate}
\item each simple isogeny factor $B_i, C_j$ has dimension 1 or 2;
\item each simple isogeny factor $B_i, C_j$ is of type I, II or III in the sense of Albert and has odd relative dimension\footnote{the relative dimension of an abelian variety is an integer dividing its dimension, so this condition is in particular satisfied by all abelian varieties of odd dimension (provided they are of type I, II or III in the Albert classification).} (see \cite[Definition 1.3]{MR3494170}).
\end{enumerate}
Then $h=k$, and, up to renumbering, $B_i$ is isogenous to $C_i$ for each $i=1,\ldots,k$.
\end{theorem}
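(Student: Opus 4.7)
My plan rests on the reformulation of strong iso-Kummerianity via Hodge groups that the excerpt anticipates. The hypothesis $K(A_1[d]) = K(A_2[d])$ for every $d \geq 1$ implies in particular that $K((A_1 \times A_2)[\ell^n]) = K(A_1[\ell^n])$ for every prime $\ell$ and every $n \geq 1$, so the $\ell$-adic Galois image attached to $A_1 \times A_2$ projects isomorphically onto the one attached to $A_1$ (and likewise onto that attached to $A_2$). Invoking Lemma \ref{lemma:IsoKummerianImplesSameHodgeDimension} this yields the crucial dimension equality
\begin{equation*}
\dim \Hg(A_1 \times A_2) = \dim \Hg(A_1) = \dim \Hg(A_2).
\end{equation*}

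Next I would pool the simple factors. Let $D_1, \ldots, D_m$ denote the distinct $\overline{K}$-isogeny classes of simple abelian varieties appearing among $\{B_1, \ldots, B_h\} \cup \{C_1, \ldots, C_k\}$, so that $(A_1 \times A_2)_{\overline{K}} \sim D_1^{a_1} \times \cdots \times D_m^{a_m}$ for some positive integers $a_i$. Under either assumption (1) or (2) of the theorem, each $D_i$ satisfies the hypotheses of the splitting criteria of \cite{MR3494170}; iteratively applying those criteria to the pairwise non-isogenous factors gives
\begin{equation*}
\Hg(A_1 \times A_2) \cong \prod_{i=1}^{m} \Hg(D_i^{a_i}) = \prod_{i=1}^{m} \Hg(D_i),
\end{equation*}
where the last equality uses that the Hodge group is invariant under passing to isotypical powers. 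Applying the same reasoning to $A_1$ in isolation yields $\Hg(A_1) \cong \prod \Hg(D_j)$, the product now running over the distinct isogeny classes of simple factors of $A_1$; and symmetrically for $A_2$.

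Combining these decompositions with the dimension equality above,
\begin{equation*}
\sum_{j=1}^{m} \dim \Hg(D_j) = \dim \Hg(A_1 \times A_2) = \dim \Hg(A_1) = \sum_{D_j \text{ a factor of } A_1} \dim \Hg(D_j),
\end{equation*}
and an identical chain holds for $A_2$. Since $\dim \Hg(D_j) > 0$ for every non-trivial simple abelian variety $D_j$, this forces each $D_j$ to appear as an isogeny factor of both $A_1$ and $A_2$. Hence the sets $\{B_1, \ldots, B_h\}$ and $\{C_1, \ldots, C_k\}$ of simple isogeny classes both coincide with $\{D_1, \ldots, D_m\}$, which gives $h = m = k$ and the claimed matching of factors after renumbering.

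The main obstacle in turning this sketch into a proof is the iterated application of the splitting criteria of \cite{MR3494170}, which are most naturally formulated for products of two simple factors: one must verify that the conclusion $\Hg(X \times Y) \cong \Hg(X) \times \Hg(Y)$ propagates to products with many pairwise non-isogenous factors under the hypotheses of (1) or (2). The Albert-type restrictions in the theorem are tailored precisely so that this pairwise splitting holds for every pair $D_i, D_j$, which explains why exactly these are the hypotheses singled out in the statement.
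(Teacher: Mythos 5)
Your argument reaches the correct conclusion and rests on the same cornerstone --- the dimension equality $\dim \Hg(A_1 \times A_2) = \dim \Hg(A_1) = \dim \Hg(A_2)$ from Lemma \ref{lemma:IsoKummerianImplesSameHodgeDimension} --- but it takes a genuinely different route from the paper's proof, and the difference is instructive. You attempt to decompose $\Hg(A_1 \times A_2)$, $\Hg(A_1)$, and $\Hg(A_2)$ each as a product over all their respective pairwise non-isogenous simple factors and then count dimensions globally. The paper instead argues by contradiction: supposing a simple factor $B_i$ of $A_1$ is not a factor of $A_2$, it uses the surjection $(A_1 \times A_2)_{\overline K} \to (B_i \times A_2)_{\overline K}$ to get $\dim H_\ell(A_1 \times A_2) \geq \dim H_\ell(B_i \times A_2)$, and then applies \cite[Corollaries 4.4 and 4.5]{MR3494170} exactly once, to the single pair $(B_i, A_2)$, where $B_i$ is simple of the required type and not a factor of $A_2$; this yields $\dim H_\ell(B_i \times A_2) = \dim H_\ell(B_i) + \dim H_\ell(A_2) > \dim H_\ell(A_2)$, contradicting the dimension equality. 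What the paper's route buys is precisely the avoidance of the iterated-splitting step you flag as your main obstacle: a single two-factor application of the cited corollaries (in a form where one factor may already be a product) suffices, and no induction over the $D_i$'s is needed. Your iterated version $\Hg\bigl(\prod_i D_i\bigr) \cong \prod_i \Hg(D_i)$ can in fact be obtained by the same corollaries via induction (peel off one simple $D_1$ at a time against the product of the rest), so the approach is salvageable, but it does more work than necessary and requires the additional observation that the corollaries apply when one of the two factors is itself a product of simple abelian varieties of the allowed types. One further detail: the paper does not take $\dim H_\ell(B_i) > 0$ for granted but supplies a short argument via Tate's conjecture (Faltings), which your sketch uses implicitly when invoking positivity of $\dim \Hg(D_j)$.
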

By definition, all (geometrically) simple abelian varieties with an action of an order in a CM field are of type IV in the classification of Albert. Thus, the combination of Theorems \ref{thm:MainIntro} and \ref{thm:IntroClassification} suggests that most strongly iso-Kummerian pairs not explained by isogenies are indeed connected to the presence of a CM field inside the endomorphism algebra of $A_1, A_2$. We also remark that the criteria of \cite{MR3494170}, especially Theorem 1.1 in op.~cit., would give further extensions of Theorem \ref{thm:IntroClassification}, but these are too technical to state concisely.

\smallskip

The paper is organised as follows. In Section \ref{sect:Preliminaries} we collect some necessary preliminaries on Hodge groups and Galois representations attached to abelian varieties, and introduce some notation relevant to the theory of complex multiplication. We also prove a result that will be used in Section \ref{sect:EndZ} to construct pairs of abelian varieties over number fields with a prescribed Mumford-Tate group (Proposition \ref{prop:DescentToNumberFields}). In Section \ref{sect:PartialClassification} we describe constraints satisfied by all strongly iso-Kummerian pairs over number fields, and in particular we connect condition \eqref{eq:Question} with properties of the Hodge and Mumford-Tate groups of $A_1, A_2$. This allows us to prove Theorem \ref{thm:IntroClassification}. In Section \ref{sect:CMMT} we describe the Mumford-Tate group of a product of simple CM abelian varieties in terms of linear algebra. In Section \ref{sect:CMCounterexamples} we prove Theorem \ref{thm:MainIntro}, discuss CM abelian varieties of dimension at most 3, and give an isolated example of a strongly iso-Kummerian pair over $\mathbb{Q}$. Finally, in Section \ref{sect:EndZ} we describe an interesting family of pairs of non-isogenous abelian varieties that have no nontrivial endomorphisms, but still come very close to satisfying \eqref{eq:Question}.

\medskip

\noindent\textbf{Acknowledgments.} I thank David Zywina for interesting discussions and Giovanni Gaiffi for providing a reference for the properties of Spin groups. I'm grateful to Heidi Goodson for pointing me to \cite{ZywinaMonodromy} and for motivating me to work out the details of Theorem \ref{thm:Shioda}. Finally, I thank Nicolas Ratazzi, who originally suggested to me the problem of studying the Galois representations attached to a product of abelian varieties.

\section{Preliminaries}\label{sect:Preliminaries}

\subsection{The Hodge group}
We briefly review the notion of Hodge group of a complex abelian variety $X$, referring the reader to \cite{Moonen_noteson} for more details. The $\mathbb{Q}$-vector space $V(X):=H_1 \left(X(\mathbb{C}),\mathbb{Q}\right)$ is naturally endowed with a Hodge structure of type $(-1,0)\oplus(0,-1)$, that is, a decomposition of $\mathbb{C}$-vector spaces $V(X) \otimes \mathbb{C} \cong V(X)^{-1,0} \oplus V(X)^{0,-1}$ such that $\overline{V(X)^{-1,0}}=V(X)^{0,-1}$.
Let $\mu_\infty:\mathbb{G}_{m,\mathbb{C}} \to \GL _{V(X)_\mathbb{C}}$ be the unique cocharacter through which $z \in \mathbb{C}^* = \mathbb{G}_{m,\mathbb{C}}(\mathbb{C})$ acts as multiplication by $z$ on $V(X)^{-1,0}$ and trivially on $V(X)^{0,-1}$. The \textit{Mumford-Tate} group $\MT(X)$ of $X$ is the $\mathbb{Q}$-Zariski closure of the image of $\mu_\infty$, that is to say, the smallest $\mathbb{Q}$-algebraic subgroup $\MT(X)$ of $\GL_{V(X)}$ such that $\mu_\infty$ factors through $\MT(X)_\mathbb{C}$.
The \textit{Hodge group} $\Hg(X)$ of $X$ is the identity component of $\MT(X) \cap \operatorname{SL}_{V(X)}$.
The group $\MT(X)$ contains the torus of homotheties in $\GL_{V(X)}$, which implies both the equality $\dim \MT(X) = \dim \Hg(X) +1$ and the fact that $\Hg(X), \MT(X)$ determine each other.
The group $\Hg(X)$ is connected and reductive, and the $\Hg(X)$-equivariant endomorphisms of the rational vector space $V(X)$ are canonically isomorphic to $\operatorname{End}^0\left(X \right) := \operatorname{End}\left(X \right) \otimes_{\mathbb{Z}} \mathbb{Q}$.
Finally, the next proposition describes the behaviour of $\Hg(X)$ with respect to products: 
\begin{proposition}\label{prop_Products} 
\begin{enumerate}
\item Let $X_1,X_2$ be complex abelian varieties. The group $\Hg(X_1 \times X_2)$ is contained in $\Hg(X_1) \times \Hg(X_2)$ and projects surjectively onto each factor. 
\item 
Let $X_1, \ldots, X_k$ be simple complex abelian varieties that are pairwise non-isogenous, and let $n_1,\ldots,n_k$ be positive integers. The groups $\Hg(X_1^{n_1} \times \cdots \times X_k^{n_k})$ and $\Hg(X_1 \times \cdots \times X_k)$ are isomorphic.
\end{enumerate}
\end{proposition}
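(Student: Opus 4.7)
The plan is to derive both parts directly from the description of $\MT(X)$ as the $\mathbb{Q}$-Zariski closure of the image of $\mu_\infty$, combined with the observation that the functor $V(-)$ sends products of abelian varieties to direct sums of Hodge structures. The corresponding assertions for $\Hg$ will then follow from the definition $\Hg(X) = (\MT(X) \cap \SL_{V(X)})^{\circ}$.

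For (1), the first step is to exploit the canonical decomposition $V(X_1 \times X_2) = V(X_1) \oplus V(X_2)$, which respects the Hodge type $(-1,0) \oplus (0,-1)$: accordingly $\mu_\infty^{X_1 \times X_2}$ factors through the block-diagonal subgroup $\GL_{V(X_1)} \times \GL_{V(X_2)} \subseteq \GL_{V(X_1 \times X_2)}$ and projects, on each factor, to $\mu_\infty^{X_i}$. Taking $\mathbb{Q}$-Zariski closures, the inclusion $\MT(X_1 \times X_2) \subseteq \MT(X_1) \times \MT(X_2)$ is immediate, since the right-hand side is a $\mathbb{Q}$-algebraic subgroup of $\GL_{V(X_1)} \times \GL_{V(X_2)}$ containing the image of $\mu_\infty^{X_1 \times X_2}$. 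Surjectivity of each projection then follows from the standard fact that the image of a morphism of algebraic groups is a closed subgroup: the projection sits inside $\MT(X_i)$ by the inclusion just established, and contains $\mu_\infty^{X_i}(\mathbb{G}_{m,\mathbb{C}})$, hence its Zariski closure $\MT(X_i)$. The analogous statements for $\Hg$ are obtained by intersecting with $\SL$ and taking the identity component.

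For (2), I would set $Z := X_1 \times \cdots \times X_k$ and $Y := X_1^{n_1} \times \cdots \times X_k^{n_k}$ and use the canonical $n_i$-fold diagonal embedding $V(Z) \hookrightarrow V(Y)$, which is a morphism of rational Hodge structures. This induces a closed embedding $\GL_{V(X_1)} \times \cdots \times \GL_{V(X_k)} \hookrightarrow \GL_{V(X_1^{n_1})} \times \cdots \times \GL_{V(X_k^{n_k})}$ carrying $\mu_\infty^Z$ onto $\mu_\infty^Y$; taking $\mathbb{Q}$-Zariski closures then yields an isomorphism $\MT(Z) \xrightarrow{\sim} \MT(Y)$, and hence the claimed $\Hg(Z) \xrightarrow{\sim} \Hg(Y)$.

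There is no essential obstacle in this argument: both assertions are formal consequences of the functoriality of $\MT$ and $\Hg$ with respect to direct sums of Hodge structures. The only care required is in the standard manipulations with Zariski closures and projections of algebraic groups, which are handled by general facts from the theory of linear algebraic groups. It is worth noting that the proof as sketched does not actually use the simplicity or pairwise non-isogeny hypotheses in (2); these become material only when one asks the much finer question---central to the rest of the paper---of when the inclusion in (1) is in fact an equality.
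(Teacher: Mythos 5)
Your proof of part (2) is correct, and you are right that the simplicity and pairwise non-isogeny hypotheses play no role there. Your argument for part (1) correctly establishes the inclusion and surjectivity statements for the Mumford--Tate groups, but the step ``the analogous statements for $\Hg$ are obtained by intersecting with $\SL$ and taking the identity component'' has a gap. Intersecting $\MT(X_1\times X_2)\subseteq\MT(X_1)\times\MT(X_2)$ with $\SL_{V(X_1)\oplus V(X_2)}$ gives only
\[
\Hg(X_1\times X_2)\subseteq\bigl((\MT(X_1)\times\MT(X_2))\cap\SL_{V(X_1)\oplus V(X_2)}\bigr)^\circ,
\]
and the group on the right is strictly larger than $\Hg(X_1)\times\Hg(X_2)$: a block-diagonal pair $(g_1,g_2)$ with $\det(g_1)\det(g_2)=1$ need not have each $\det(g_i)=1$, and indeed the right-hand side contains a one-dimensional torus of pairs of scalars (since each $\MT(X_i)$ contains the homotheties) which $\Hg(X_1)\times\Hg(X_2)$ does not. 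So neither the claimed inclusion nor the surjectivity of the projections onto each $\Hg(X_i)$ follows from this formal manipulation.

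The repair is standard. One option is to use the equivalent characterization of $\Hg(X)$ as the smallest $\mathbb{Q}$-algebraic subgroup of $\GL_{V(X)}$ whose real points contain the image of $h|_{U^1}$, where $h:\mathbb{S}\to\GL_{V(X)_\mathbb{R}}$ defines the Hodge structure and $U^1\subseteq\mathbb{S}$ is the unit circle; then your argument for $\MT$ transposes verbatim to $\Hg$, since $h_{X_1\times X_2}|_{U^1}$ is block-diagonal with blocks $h_{X_i}|_{U^1}$. Alternatively, use a product polarization $\psi_1\oplus\psi_2$: the group $\Hg(X_1\times X_2)$ lies in $\operatorname{Sp}(V(X_1)\oplus V(X_2),\psi_1\oplus\psi_2)$, and a block-diagonal element of this group preserves each $\psi_i$, hence lies in $\operatorname{Sp}(V(X_i),\psi_i)\subseteq\SL_{V(X_i)}$, hence in $\Hg(X_i)$ by connectedness. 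The paper states this proposition without proof, so there is no argument to compare against; the rest of your approach is the standard one, and this is the only step that needs tightening.
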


\subsection{Galois representations}\label{subsect:GalReps}
Let $A$ be an abelian variety defined over a number field $K$. For every prime $\ell$ and every $n \geq 1$ there is a natural continuous action of $\abGal{K}$ on $A[\ell^n]$, giving rise to a representation
\[
\rho_{\ell^n} : \abGal{K} \to \operatorname{Aut} (A[\ell^n]);
\]
the extension $K(A[\ell^n])/K$ is Galois, and its Galois group can be identified with the image $G_{\ell^n}$ of $\rho_{\ell^n}$. Taking the inverse limit of this system of representations gives rise to the (continuous) $\ell$-adic representation on the Tate module $T_\ell A$,
\[
\rho_{\ell^\infty} : \abGal{K} \to \operatorname{Aut} (T_\ell A).
\]
Setting $V_\ell(A)=T_\ell(A) \otimes \mathbb{Q}_\ell$ we obtain in particular a representation $\rho_{\ell^\infty} : \abGal{K} \to \GL(V_\ell(A)) \cong \operatorname{GL}_{2\dim A}(\mathbb{Q}_\ell)$. We will call the Zariski closure of the image $G_{\ell^\infty}$ of $\rho_{\ell^\infty}$ the \textit{algebraic monodromy group at $\ell$}, and we will denote it by $G_\ell(A)$. As in the Hodge-theoretical situation, it is known that $G_\ell(A)$ contains the homotheties \cite{Bogomolov}. %
In analogy to the complex Hodge group we then define $H_\ell(A)$ as the identity component of $G_\ell(A) \cap \SL_{V_\ell(A)}$.

Suppose now that we have fixed an embedding $K \hookrightarrow \mathbb{C}$, so that we can consider $A$ as a complex abelian variety and speak of its Hodge group. The Mumford-Tate conjecture predicts that (for any choice of the embedding $K \hookrightarrow \mathbb{C}$) the group $H_\ell(A)$ should be obtained from $\Hg(A)$ by base-change from $\mathbb{Q}$ to $\mathbb{Q}_\ell$ (equivalently, that $\MT(A) \times_{\mathbb{Q}} \mathbb{Q}_\ell \cong G_\ell(A)^0$). Notice that this statement makes sense since by the comparison isomorphism of étale cohomology we can identify $V_\ell(A)$ with $H_1(A(\mathbb{C}), \mathbb{Q}) \otimes_{\mathbb{Q}} \mathbb{Q}_\ell$. The groups $\Hg(A)$ and $H_\ell(A)$ are known to share many properties. We will only use some basic ones: the group $H_\ell(A)$ is unchanged by finite extensions of the base field $K$, and if $A, B$ are $K$-abelian varieties that are $\overline{K}$-isogenous we have $H_\ell(A) \cong H_\ell(B)$. The groups $H_\ell$ also satisfy an exact analogue of Proposition \ref{prop_Products}.

\subsection{CM fields and CM types}\label{subsec:CMTypes}
A number field $E$ is said to be a \textit{CM field} if it is a totally imaginary quadratic extension of a totally real number field $E_0$. Denote by $\Sigma_E := \operatorname{Hom}_{\text{fields}}(E,\mathbb{C})$ the set of field embeddings of $E$ into $\mathbb{C}$. We briefly recall the notions of CM type, of reflex type, and of reflex norm; we refer the reader to \cite[§3]{MR608640} and \cite{MR3766118} for further details. Let $L/\mathbb{Q}$ be the Galois closure of $E/\mathbb{Q}$, let $G$ be the Galois group of $L/\mathbb{Q}$, and let $H < G$ be the subgroup corresponding to $E$. We think of $G$ as acting on $L$ on the right, and identify $\Sigma_E$ with $H \backslash G$. Any complex conjugation determines the same automorphism of $L$, hence we have a well-defined (central) involution $\rho$ of $G$ that sends $E$ to itself. For $g \in G$ we denote by $\overline{g} \in G$ the composition $g \circ \rho$, and similarly for $\varphi \in \Sigma_E$ we write $\overline{\varphi}$ for the composition $\varphi \circ \rho$.
A \textit{CM type} of $E$ is a subset $\Phi$ of $\Sigma_E = H \backslash G$ such that $\Phi \cap \overline{\Phi} = \emptyset$ and $\Phi \cup \overline{\Phi} = \Sigma_E$, where $\overline{\Phi} = \{ \overline{\phi} : \phi \in \Phi \}$. In other words, a CM type is the choice of a distinguished element in each pair of complex conjugate embeddings of $E$ into $\mathbb{C}$. Given a CM type $\Phi \subset H \backslash G$ we let $\tilde{\Phi} = \{ \varphi \in G: H\varphi \in \Phi \}$. The CM type $\Phi$ is called \textit{simple} or \textit{primitive} if $H = \{ g \in G : g \tilde{\Phi} = \tilde{\Phi} \}$. The reflex group of $\Phi$ is $H' = \{g \in G : \tilde{\Phi}g = \tilde{\Phi}\}$ and the reflex field $E^*$ is the fixed field of $H'$. The reflex CM type $\Phi^*$ of the reflex field $E^*$ is obtained as follows: letting $\tilde{\Phi}^{-1} = \{ g^{-1} : g \in \tilde{\Phi} \}$, the CM type $\Phi^*$ is the image of $\tilde{\Phi}^{-1}$ in $H' \backslash G = \Sigma_{E^*}$.
The \textit{reflex norm} corresponding to $\Phi$ is the map%
\[
\begin{array}{cccc}
N_{\Phi^*} : & (E^*)^\times & \to & E^\times \\
& x & \mapsto & \prod_{\phi \in \Phi^*} x^\phi.
\end{array}
\]
We will also say that two CM types $\Phi_1, \Phi_2$ for a CM field $E$ are \textit{essentially equal} if there exists an automorphism $\sigma$ of $E$ such that $\Phi_2$ coincides with $\Phi_1 \circ \sigma := \{\phi \circ \sigma : \phi \in \Phi_1 \}$. Otherwise we say that $\Phi_1, \Phi_2$ are \textit{essentially different}.

Let $A/K$ be a $g$-dimensional absolutely simple abelian variety, admitting complex multiplication over $\overline{K}$ by the CM field $E$, of degree $2g$ over $\mathbb{Q}$ (that is, we have $\operatorname{End}^0_{\overline{K}}(A) \cong E$). We will simply say that $A$ is a CM abelian variety, or more precisely that it has CM by $E$. The tangent space at the identity of $A_{\overline{K}}$ is both a $\overline{K}$-module and an $E$-module, and the two actions are compatible: it follows that this tangent space is a $(E \otimes \overline{K})$-bimodule, so it decomposes as $T_{\operatorname{id}} A_{\overline{K}} \cong \prod_{\varphi \in \Phi} \overline{K}_{\varphi}$, where $\overline{K}_{\varphi}$ is a 1-dimensional $\overline{K}$-vector space on which $E$ acts through the embedding $\varphi:E \hookrightarrow \overline{K}$. The set $\Phi$ of embeddings that appear in this decomposition can be shown to be a simple CM type for $E$, and we say that $A$ admits complex multiplication by the CM type $(E,\Phi)$.

\begin{remark}\label{rmk:NotIsogenous}
Notice that (with our conventions) the CM type of a CM abelian variety is only well-defined up to the choice of the isomorphism $E \cong \operatorname{End}^0_{\overline{K}}(A)$; equivalently, it is only well-defined up to
automorphisms of $E$. Two geometrically simple abelian varieties $A_1, A_2$ with CM by the same field $E$ are geometrically isogenous if and only if their respective CM types are essentially equal, see \cite[Corollary 3.13]{milneCM}.

\end{remark}

The following result is well-known, see for example \cite[Chapter I, §4 and Chapter V, Proposition 1.1]{MR713612}:
\begin{theorem}\label{thm:CMExistence}
Let $E$ be a CM field and let $\Phi$ be a simple CM type of $E$. There exist a number field $K$ and a geometrically simple CM abelian variety $A/K$ such that $\operatorname{End}^0_{\overline{K}}(A) \cong E$ and the CM type of $A$ is essentially equal to $\Phi$.
\end{theorem}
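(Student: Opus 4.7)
The plan is analytic: I construct a complex abelian variety with the prescribed CM structure, verify it is simple with endomorphism algebra exactly $E$, and descend it to a number field. No Galois-theoretic input is needed.

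For the complex construction, I use the CM type $\Phi$ to embed $E$ diagonally into $\mathbb{C}^{\Phi}$ via $e \mapsto (\varphi(e))_{\varphi \in \Phi}$. The image of $\mathcal{O}_E$ is a $\mathbb{Z}$-lattice of rank $2g$, so $A_0 := \mathbb{C}^{\Phi}/\Phi(\mathcal{O}_E)$ is a complex torus of dimension $g$ carrying a tautological $\mathcal{O}_E$-action. To make $A_0$ algebraic, I pick $\xi \in E^{\times}$ with $\overline{\xi} = -\xi$ and $\operatorname{Im} \varphi(\xi) > 0$ for every $\varphi \in \Phi$; such $\xi$ exists by weak approximation applied to the real places of $E_0$, by writing $E = E_0(\alpha)$ with $\overline{\alpha} = -\alpha$ and choosing $\beta \in E_0$ with prescribed signs so that $\xi = \beta\alpha$ has the required positivity. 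The form $(x,y) \mapsto \operatorname{Tr}_{E/\mathbb{Q}}(\xi x \overline{y})$ is a Riemann form on $\Phi(\mathcal{O}_E)$, so $A_0$ is an abelian variety, and the action of $E$ on $T_{\mathrm{id}} A_0 \cong \mathbb{C}^{\Phi}$ factors by design through the embeddings in $\Phi$.

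Next, I check that $A_0$ is geometrically simple with $\operatorname{End}^0(A_0) \cong E$. The inclusion $E \hookrightarrow \operatorname{End}^0(A_0)$ is tautological. Because $\Phi$ is primitive, it cannot be induced from a CM type of a proper CM subfield of $E$; the standard reflex-field argument then shows that $E$ is exactly $\operatorname{End}^0(A_0)$. Since this algebra is a field, $A_0$ is isotypic: writing $A_0 \sim B^n$ with $B$ simple, one has $\operatorname{End}^0(A_0) \cong M_n(\operatorname{End}^0(B))$, which is a field only for $n=1$, whence $A_0$ is simple.

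For the descent to a number field I use that CM moduli points are algebraic: a complex abelian variety whose endomorphism algebra contains a CM field acting by a full CM type is isomorphic, as a variety with CM structure, to the base change of one defined over $\overline{\mathbb{Q}}$. Pick a model $A$ defined over some number field $K$. Since the endomorphism ring of an abelian variety in characteristic $0$ is unchanged under extensions of algebraically closed base fields, all elements of $\operatorname{End}^0(A_0) \cong E$ are already defined over $\overline{K}$, and the CM type of $A_{\overline K}$ agrees with $\Phi$ via the comparison isomorphism. The main obstacle is really the third step, namely showing that primitivity of $\Phi$ forces $\operatorname{End}^0(A_0)$ to coincide with $E$ (and hence simplicity); the analytic construction and the algebraicity of CM moduli are essentially formal.
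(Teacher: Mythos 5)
The paper does not prove this statement itself but refers to Lang's \emph{Complex Multiplication} (Ch.~I \S4 for the analytic construction, Ch.~V Prop.~1.1 for the descent), and your sketch is precisely that reference's argument: the torus $\mathbb{C}^{\Phi}/\Phi(\mathcal{O}_E)$, a Riemann form $\operatorname{Tr}_{E/\mathbb{Q}}(\xi x \bar{y})$ with $\bar\xi=-\xi$ and $\operatorname{Im}\varphi(\xi)>0$ for $\varphi\in\Phi$, primitivity forcing simplicity and $\operatorname{End}^0=E$, and algebraicity of CM moduli. One terminological slip worth noting: the step deducing $\operatorname{End}^0(A_0)=E$ from primitivity of $\Phi$ is not a ``reflex-field argument'' (reflex fields play no role there) but the standard Wedderburn/representation-theoretic one (Lang Ch.~I, Thm.~3.5), which uses that $H_1(A_0,\mathbb{Q})$ is a free $E$-module of rank one to get isotypicality and then observes that $A_0\sim B^n$ with $n>1$ would exhibit $\Phi$ as induced from a CM type on a proper CM subfield of $E$; the usual logical order is therefore primitivity $\Rightarrow$ simplicity $\Rightarrow\operatorname{End}^0=E$, rather than the reverse you describe, but this is cosmetic and your overall proof is correct.
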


\subsubsection{The reflex norm as a map on tori}
For any number field $E$ let $T_E := \operatorname{Res}_{E/\mathbb{Q}}(\mathbb{G}_m)$ be the corresponding algebraic torus over $\mathbb{Q}$. We have in particular $T_E(\mathbb{Q})=E^\times$. The torus $T_E$ can equivalently be described as the unique $\mathbb{Q}$-torus with character group $\mathbb{Z}[\Sigma_E]$, where $\Sigma_E$ is the set of embeddings of $E$ in $\overline{\mathbb{Q}}$, endowed with its natural Galois action. In the context of CM types, one can easily check that the reflex norm $(E^*)^\times = T_{E^*}(\mathbb{Q}) \to T_E(\mathbb{Q}) = E^\times$ is induced by a map of algebraic tori $N_{\Phi^*} : T_{E^*} \to T_E$. Denoting by $\widehat{T}$ the character group of the torus $T$, the map on character groups corresponding to $N_{\Phi^*}$ is
\begin{equation}\label{eq:ReflexNorm}
\begin{array}{ccc}
\widehat{T}_{E} & \to & \widehat{T}_{E^*} \\

[\varphi] & \mapsto & \sum_{\psi \in \Phi^*} [\psi \varphi],
\end{array}
\end{equation}
where $[\varphi]$ denotes the character of $T_E$ corresponding to the embedding $\varphi : E \to \overline{\mathbb{Q}}$. It can be checked that this map is Galois-equivariant and therefore the reflex norm (seen as a map of algebraic tori) is defined over $\mathbb{Q}$.

\subsection{Galois representations in the CM case}
Let $A$ be a CM abelian variety defined over a number field $K$. We briefly review the well-known description of the Galois representations attached to $A$ in the language of class field theory. Recall that the idèles $I_K$ of a number field $K$ are the restricted product $\prod'_{v \in \Omega_K} K_v^\times$, where $\Omega_K$ is the set of all (finite and infinite) places of $K$ and the restricted product is taken with respect to the subgroups $U_v$ of local units (so that $U_v = K_v^\times$ if $v$ is infinite and $U_v = \mathcal{O}_{K_v}^\times$ if $v$ is finite). For every number field $F$ we denote by $F_\ell$ the algebra $F \otimes \mathbb{Q}_\ell$, and for an idèle $a \in I_K$ we write $a_\ell$ for the component of $a$ in $K_\ell\cong \displaystyle \prod_{\operatorname{char}(v)=\ell} K_v^\times$.
If $L/F$ is an extension of number fields, for each prime number $\ell$ the the norm map $N_{L/F} : L^\times \to F^\times$ induces local norms $N_{L/F} : L_\ell^\times \to F_\ell^\times$.
The following result, which combines \cite[Chapter 3, Theorem 1.1]{MR713612} with \cite[Theorems 6, 10 and 11, Corollary 2 to Theorem 5, and Remark on page 502]{MR0236190}, gives a description of the Galois representations in the CM case. Note that the results quoted are actually more general, but for simplicity we only state the version we will need.

\begin{theorem}\label{thm:CM}
Let $E$ be a CM field and let $A$ be an absolutely simple CM abelian variety over a number field $K$ with $\operatorname{End}_{\overline{K}}(A)\cong \operatorname{End}_K(A)\cong \mathcal{O}_E$. Let $\Phi$ be the CM type of $A$. The reflex field $E^*$ of $(E,\Phi)$ is contained in $K$, and the following hold for each prime $\ell$:
\begin{itemize}
\item $T_\ell(A)$ is a free $(\mathcal{O}_E \otimes \mathbb{Z}_\ell)$-module of rank 1;
\item the $\ell$-adic Galois representation $\rho_{\ell^\infty}$ attached to $A$ lands in $\operatorname{Aut}_{\mathcal{O}_E \otimes \mathbb{Z}_\ell}T_\ell(A) \cong (\mathcal{O}_E\otimes \mathbb{Z}_\ell)^\times$;
\item the representation $\rho_{\ell^\infty} : \operatorname{Gal}(\overline{K}/K) \to (\mathcal{O}_E\otimes \mathbb{Z}_\ell)^\times$ factors through $\operatorname{Gal}(\overline{K}/K)^{\operatorname{ab}}$, so it induces a map from $I_K$ to $(\mathcal{O}_E\otimes \mathbb{Z}_\ell)^\times$ (which we still denote by $\rho_{\ell^\infty}$). There is a unique continuous homomorphism $\varepsilon : I_K \to E^\times$ such that
\[
\rho_{\ell^\infty}(a) = \varepsilon(a) N_{\Phi^*}(N_{K/E^*}(a_\ell))^{-1} \quad \forall a \in I_K.
\]
\item let $v$ be a finite place of $K$. The restriction of $\varepsilon$ to $U_v$ has image contained in $\mu(E)$, the group of roots of unity in $E$, and is trivial if and only if $v$ is a place of good reduction of $A$.
\end{itemize}
\end{theorem}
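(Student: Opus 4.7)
The plan is to assemble the four assertions from the classical literature on complex multiplication; since this theorem is essentially a compilation of results of Shimura--Taniyama and Serre--Tate, my task is to explain how the statement falls out of the references and to flag which piece is the substantive one.

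For the first two bullets I would argue as follows. The Tate module $T_\ell(A)$ is a free $\mathbb{Z}_\ell$-module of rank $2g = [E:\mathbb{Q}]$, carrying a faithful $\mathcal{O}_E \otimes \mathbb{Z}_\ell$-action that commutes with $\abGal{K}$ because by hypothesis every endomorphism of $A_{\overline{K}}$ is already defined over $K$. The ring $\mathcal{O}_E \otimes \mathbb{Z}_\ell$ is a finite product of complete discrete valuation rings whose total $\mathbb{Z}_\ell$-rank equals the $\mathbb{Z}_\ell$-rank of $T_\ell(A)$; since $V_\ell(A)$ is free of rank $1$ over $E \otimes \mathbb{Q}_\ell$ (it is a faithful module of the correct dimension) and $T_\ell(A)$ is torsion-free, the standard local argument at each prime of $\mathcal{O}_E$ above $\ell$ yields that $T_\ell(A)$ is free of rank $1$ over $\mathcal{O}_E \otimes \mathbb{Z}_\ell$. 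The second bullet is then immediate: $\rho_{\ell^\infty}$ commutes with the $\mathcal{O}_E$-action, so its image lies in $\operatorname{Aut}_{\mathcal{O}_E \otimes \mathbb{Z}_\ell} T_\ell(A) \cong (\mathcal{O}_E \otimes \mathbb{Z}_\ell)^\times$.

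The third bullet is the substantive content; the factorization through $\abGal{K}^{\operatorname{ab}}$ is formal (the image lies in a commutative group), and the explicit identity
\[
\rho_{\ell^\infty}(a) = \varepsilon(a) N_{\Phi^*}(N_{K/E^*}(a_\ell))^{-1}
\]
together with the existence and uniqueness of the continuous character $\varepsilon : I_K \to E^\times$ is the main theorem of complex multiplication of Shimura--Taniyama as formulated in \cite[Chapter 3, Theorem 1.1]{MR713612}. Here the inclusion $E^* \subseteq K$---required for the reflex norm on the right to produce an element of $E^\times$---follows from the classical fact that the reflex field is the field of moduli of the CM type $(E,\Phi)$, and is included in the same reference. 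The fourth bullet then combines this explicit formula with the Néron--Ogg--Shafarevich criterion as recorded in \cite{MR0236190}: for $a \in U_v$ with $v$ of residue characteristic different from $\ell$, the factor $N_{\Phi^*}(N_{K/E^*}(a_\ell))^{-1}$ is a unit in $\mathcal{O}_E \otimes \mathbb{Z}_\ell$, so the formula identifies $\varepsilon(a)$ with an inertial piece of the Galois action; standard compactness shows that $\varepsilon(U_v)$ lies in $\mu(E)$, and triviality of inertia at $v$ (equivalently, good reduction at $v$) corresponds to triviality of $\varepsilon|_{U_v}$. The only genuine obstacle is the Shimura--Taniyama reciprocity law underlying the third bullet; everything else is a matter of reorganising the cited references.
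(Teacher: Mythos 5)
Your proposal takes essentially the same approach as the paper: the paper does not give a proof but simply attributes the theorem to the combination of \cite[Chapter 3, Theorem 1.1]{MR713612} and \cite[Theorems 6, 10, 11, Corollary 2 to Theorem 5, and the Remark on p.~502]{MR0236190}, which are exactly the references you invoke, and your filling-in of the routine pieces (rank-one freeness of $T_\ell(A)$ over $\mathcal{O}_E\otimes\mathbb{Z}_\ell$, commutativity, the compactness argument for $\varepsilon(U_v)\subseteq\mu(E)$, and the Néron--Ogg--Shafarevich link for the final bullet) is sound.
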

Combined with the Néron-Ogg-Shafarevich criterion, this also has the following well-known consequence:
\begin{corollary}\label{cor:GoodReduction}
In the setting of the previous theorem, there exists a finite extension $K'$ of $K$ such that $A_{K'}$ has good reduction at all finite places of $K'$.
\end{corollary}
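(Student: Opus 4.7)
The plan is to show that $A$ has potentially good reduction at every finite place of $K$, and then assemble the finitely many local extensions of good reduction into a single global one. The key input is the precise description of $\rho_{\ell^\infty}$ provided by Theorem \ref{thm:CM}, which forces the inertia action at every bad-reduction place to have finite image.

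First I would let $S$ be the (finite) set of finite places of $K$ where $A$ has bad reduction. Fix $v \in S$ and choose any rational prime $\ell$ different from the residue characteristic of $v$. For an idèle $a \in I_K$ supported at $v$ we have $a_\ell = 1$, so the formula in Theorem \ref{thm:CM} collapses to $\rho_{\ell^\infty}(a) = \varepsilon(a)$. Composing with the local reciprocity map at $v$, the inertia subgroup $I_v \subset \operatorname{Gal}(\overline K_v/K_v)$ acts on $T_\ell A$ through $\varepsilon|_{U_v}$, whose image lies in the finite group $\mu(E) \subset (\mathcal O_E \otimes \mathbb Z_\ell)^\times$. Thus $\rho_{\ell^\infty}(I_v)$ is finite, and the finite extension $L_v/K_v$ cut out by the kernel of this inertia representation has the property that inertia acts trivially on $T_\ell(A_{L_v})$. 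The Néron–Ogg–Shafarevich criterion (valid since $\operatorname{char}(v) \neq \ell$) then guarantees that $A_{L_v}$ has good reduction.

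Finally, I would construct a global finite extension $K'/K$ with the property that for each $v \in S$ and each place $w$ of $K'$ lying above $v$, the completion $K'_w$ contains a copy of $L_v$; equivalently (via local class field theory) $N_{K'_w/K_v}(K_w^{'\times}) \subseteq N_{L_v/K_v}(L_v^\times)$. Such a $K'$ can be built by global class field theory: pick an open subgroup of $I_K$ of finite index whose $v$-component lies in $N_{L_v/K_v}(L_v^\times)$ for each $v\in S$, and take $K'$ to be the associated class field. Then $A_{K'}$ has good reduction at every $w$ above $S$ by construction, and good reduction at the remaining finite places is inherited from $A/K$ under base change. The main technical point is this last global patching step; it is standard but requires care, and can alternatively be phrased by applying Theorem \ref{thm:CM} directly to $A_{K'}$ and checking that the new Hecke character $\varepsilon' = \varepsilon \circ N_{K'/K}$ is trivial on every local unit group $U_w$, so that the final bullet of Theorem \ref{thm:CM} hands us good reduction everywhere.
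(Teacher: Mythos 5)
Your first step is correct and is exactly what the paper intends by ``combined with the N\'eron--Ogg--Shafarevich criterion'': for $\ell$ prime to the residue characteristic of $v$, Theorem \ref{thm:CM} shows that inertia at $v$ acts on $T_\ell A$ through $\varepsilon|_{U_v}$, whose image is finite, so $A$ has potentially good reduction at every finite place. (The paper does not write out a proof of this corollary, so there is no written argument of its own to compare against.)

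The global patching, however, contains a gap that goes beyond the care you flag. Write $H \leq I_K$ for the open finite-index subgroup you propose to use. By local--global compatibility of class field theory, the local norm group at a place $w\mid v$ of the associated class field $K'$ is $N_{K'_w/K_v}(K_w'^\times) = K_v^\times \cap K^\times H$, which in general strictly contains the $v$-component $H_v$ of $H$: global units of $\mathcal{O}_K$ satisfying the congruence conditions imposed by the other components of $H$ enlarge the local norm group (this is the familiar fact that the conductor of a ray class field can be strictly smaller than the chosen modulus). Consequently, requiring $H_v \subseteq N_{L_v/K_v}(L_v^\times)$ does not guarantee $K'_w \supseteq L_v$, nor that $\varepsilon' = \varepsilon\circ N_{K'/K}$ is trivial on $U_w$; your construction as stated can fail. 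One can repair it by enlarging the modulus at auxiliary places of good reduction so as to force the offending global units to be trivial, but a cleaner route bypasses class field theory entirely: once potentially good reduction at every finite place is established, take $K' = K(A[m])$ for any $m\geq 3$ with at least two distinct prime factors (say $m=12$). By Raynaud's criterion $A_{K'}$ is then semistable at every finite place, and semistable reduction together with potentially good reduction gives good reduction everywhere.
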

Finally we make the following observation, that is used implicitly both in \cite{MR3766118} and in \cite{MR1971250}:
\begin{remark}\label{rmk:EpsilonTrivialAtInfinity}
With notation as in Theorem \ref{thm:CM}, let $v$ be an infinite place of $K$. The restriction of the homomorphism $\varepsilon$ to $K_v^\times$ (considered as the subgroup of idèles $(a_w)$ with $a_w=1$ for $w \neq v$) is trivial. Indeed, since $K$ contains the CM field $E^*$, all its infinite places are complex, so that $K_v^\times \cong \mathbb{C}^\times$ is a divisible group. The image $\varepsilon(K_v^\times)$ is then a divisible subgroup of $E^\times$, where $E$ is a number field, and is therefore trivial.
\end{remark}

\subsection{Descending from $\mathbb{C}$ to number fields}\label{sect_Descent}

In Section \ref{sect:EndZ} we will need to construct abelian varieties defined over a number field with given $\ell$-adic monodromy groups; more precisely, we will need to control their $\ell$-adic monodromy in terms of corresponding Mumford-Tate groups. This motivates our interest in the following result:

\begin{proposition}\label{prop:DescentToNumberFields}
Let $A,B$ be complex abelian varieties. There exist abelian varieties $A', B'$, defined over a common number field $K$, such that the Mumford-Tate conjecture holds for $A', B', A' \times B'$, and such that $A'$ (resp.~$B'$, resp.~$A' \times B'$) has the same dimension, Hodge group, and endomorphism algebra as $A$ (resp.~$B$, resp.~$A \times B$).
\end{proposition}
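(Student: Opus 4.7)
The plan is to realize the Hodge structure underlying $A \times B$ as a fiber of the universal family over a suitable Shimura variety of Hodge type, defined over a number field, and then to pick a fiber at a Hodge-generic algebraic point. Concretely, set $V := V(A) \oplus V(B) = V(A \times B)$ and $G := \MT(A \times B) \subseteq \GL_V$. The Hodge structure on $V$ gives a morphism $h : \operatorname{Res}_{\mathbb{C}/\mathbb{R}} \mathbb{G}_m \to G_\mathbb{R}$, and I let $X$ be its $G(\mathbb{R})$-conjugacy class. The embedding $G \hookrightarrow \GL_V$ together with a polarization of $A \times B$ realises $(G, X)$ as a Shimura datum of Hodge type, so by Deligne's theory of canonical models, $\operatorname{Sh}(G, X)$ has a model over its reflex field $E \subset \overline{\mathbb{Q}}$, and for a sufficiently small level structure it carries a universal abelian scheme $\pi : \mathcal{A} \to \operatorname{Sh}(G, X)$ defined over $E$.

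Next, I would find a point $s \in \operatorname{Sh}(G, X)(\overline{\mathbb{Q}})$ which is Hodge-generic, i.e.~such that $\MT(\mathcal{A}_s) = G$. The non-Hodge-generic locus is a countable union of proper special subvarieties, so a standard Zariski-density argument produces Hodge-generic algebraic points on each positive-dimensional component. The degenerate case $\dim \operatorname{Sh}(G, X) = 0$ occurs precisely when $G$ is a torus, in which case $A$ and $B$ are both of CM type and one concludes directly via Theorem~\ref{thm:CMExistence}. Setting $C := \mathcal{A}_s$, one obtains an abelian variety over a number field $K$ with $\MT(C) = G$, and hence with the same dimension, Hodge group, and endomorphism algebra as $A \times B$ (the latter since $\operatorname{End}^0$ is recovered as the commutant of $\Hg$ on $V$).

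To split off $A'$ and $B'$, I would use that by Proposition~\ref{prop_Products} the inclusion $G \subseteq \MT(A) \times \MT(B)$ forces $G$ to respect the decomposition $V = V(A) \oplus V(B)$; the two associated projectors then commute with $G$ and so give endomorphisms of $C$ (possibly only after enlarging $K$), yielding an isogeny decomposition $C \sim A' \times B'$ over $K$ with $V(A') \cong V(A)$ and $V(B') \cong V(B)$ as polarised rational Hodge structures. In particular $A', B'$ individually have the required dimensions, Hodge groups, and endomorphism algebras.

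The main obstacle is verifying the Mumford-Tate conjecture for $A'$, $B'$, and $A' \times B'$. Here I would invoke the fact that, for a Hodge-generic $\overline{\mathbb{Q}}$-point of a Shimura variety of Hodge type defined over a number field, the Mumford-Tate conjecture is known: the geometric $\ell$-adic monodromy representation $\pi_1\bigl(\operatorname{Sh}(G,X)_{\overline{\mathbb{Q}}}\bigr) \to \GL(V_\ell(\mathcal{A}))$ has image open in $G^{\operatorname{der}}(\mathbb{Z}_\ell)$ by the work of Deligne and Pink on monodromy along Shimura families, and this geometric monodromy is contained in $H_\ell(\mathcal{A}_s)$ for every $K$-point $s$. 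Combined with the standard inclusion $H_\ell \subseteq \Hg \otimes \mathbb{Q}_\ell$, this forces equality $H_\ell(A' \times B') = G^{\operatorname{der}} \otimes \mathbb{Q}_\ell$, i.e.~the Mumford-Tate conjecture for $A' \times B'$; projecting onto factors then yields it for $A'$ and $B'$ separately via the $\ell$-adic analogue of Proposition~\ref{prop_Products}.
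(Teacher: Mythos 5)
Your proposal essentially re-derives the Serre--Noot theorem (Theorem \ref{thm_SerreNoot}) from scratch, whereas the paper simply cites it and applies it to $X = A \times B$; the subsequent splitting of $Y$ into $A' \times B'$ via idempotents in $\operatorname{End}^0$ is the same in both. The problem is that your re-derivation of the Serre--Noot step contains a genuine gap. It is not true that ``for a Hodge-generic $\overline{\mathbb{Q}}$-point of a Shimura variety of Hodge type defined over a number field, the Mumford--Tate conjecture is known''; if it were, the conjecture would hold for essentially all abelian varieties. The specific false claim is that the geometric $\ell$-adic monodromy of the universal family is contained in $H_\ell(\mathcal{A}_s)$ for \emph{every} $K$-point $s$: this fails, for example, at any CM point $s$, where $H_\ell(\mathcal{A}_s)$ is a torus while the geometric monodromy of a positive-dimensional Shimura variety is semisimple and nontrivial. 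What Noot actually proves (following unpublished work of Serre) is that the Galois image at $s$ contains an open subgroup of the arithmetic monodromy for $s$ outside a \emph{thin} subset of $\operatorname{Sh}(G,X)(K)$, by a variant of Hilbert irreducibility; one must then choose $s$ simultaneously Hodge-generic (giving the upper bound $G_\ell(\mathcal{A}_s)^0 \subseteq \MT(\mathcal{A}_s)_{\mathbb{Q}_\ell} = G_{\mathbb{Q}_\ell}$ via Deligne) and outside that thin set (giving the lower bound). This choice is the crux of the Serre--Noot argument and does not follow from Hodge-genericity alone. Once Theorem \ref{thm_SerreNoot} is correctly invoked (or re-proved with the thin-set argument supplied), the remaining steps of your proof --- that the defining representation of $\MT$ is transferred intact, and that the projectors $\pi_A, \pi_B$ give the isogeny decomposition $C \sim A' \times B'$ with the right dimensions, Hodge groups, and endomorphism algebras --- coincide with the paper's proof and are correct.
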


Proposition \ref{prop:DescentToNumberFields} is a minor variation of the following result, due to Serre and Noot:

\begin{theorem}[{Serre, Noot \cite{MR1355123}}]\label{thm_SerreNoot}
Let $G$ be the Mumford-Tate group of an Abelian variety $X$ over $\mathbb{C}$. Then there exists an Abelian variety $Y$, defined over a number field $K$, such that $\dim(Y)=\dim(X)$ and $\MT(Y) \cong \MT(X)$.
Furthermore, if $\ell$ is a fixed rational prime, then we may choose $Y/K$ so that the image $G_\ell$ of $\operatorname{Gal}(\overline{K}/K)$ in $\operatorname{Aut}(T_\ell Y)$ is open and Zariski dense in $G$; in particular, $G_\ell(Y) = G \otimes \mathbb{Q}_\ell$ and the Mumford-Tate conjecture holds for $Y$.
\end{theorem}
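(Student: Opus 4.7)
The plan is to realise $Y$ as a point of the Shimura variety attached to the Mumford--Tate group $G$, and then to use a specialisation argument to control the $\ell$-adic image of Galois.

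First, I would observe that $(G, h)$, where $h$ is the Hodge cocharacter encoding the Hodge structure on $V(X)$, defines a Shimura datum: the $G(\mathbb{R})$-conjugacy class $X_G$ of $h$ is (a finite union of copies of) a Hermitian symmetric domain, and together with a sufficiently small compact open subgroup of $G(\mathbb{A}_f)$ it gives rise to a Shimura variety $\operatorname{Sh}(G, X_G)$, quasi-projective over a number field (the reflex field of the datum). A polarisation on $X$ induces a Hodge embedding $G \hookrightarrow \operatorname{GSp}(V(X))$, hence a closed immersion $\operatorname{Sh}(G, X_G) \hookrightarrow \mathcal{A}_{g,N}$ into a Siegel modular variety with level structure. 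The complex points of $\operatorname{Sh}(G, X_G)$ parametrise polarised abelian varieties whose Mumford--Tate group is contained in $G$; those with Mumford--Tate group strictly smaller than $G$ form a countable union of proper special subvarieties (the Hodge locus of the universal family). Since the $\overline{\mathbb{Q}}$-points of $\operatorname{Sh}(G, X_G)$ are dense in the analytic topology, one may pick such a point $s$ lying outside every special subvariety; the corresponding abelian variety $Y$ is then defined over some number field $K$, has dimension $\dim X$, and satisfies $\MT(Y) \cong G$. This already settles the first assertion.

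For the $\ell$-adic refinement I would start from the theorem of Borovoi--Deligne--Piatetski-Shapiro, which implies that the identity component $G_\ell(Y)^0$ of the algebraic monodromy is automatically contained in $G \otimes_\mathbb{Q} \mathbb{Q}_\ell$ for every $Y$ parametrised by $\operatorname{Sh}(G, X_G)$; the task is thus to arrange equality, equivalently to ensure that the arithmetic $\ell$-adic image is open in $G(\mathbb{Q}_\ell)$. The geometric $\ell$-adic monodromy of the universal family on $\operatorname{Sh}(G, X_G)$ is, up to finite index, an arithmetic subgroup of $G(\mathbb{Q})$ embedded in $G(\mathbb{Q}_\ell)$, whose Zariski closure is $G$ by Borel density; hence the geometric image is open in $G(\mathbb{Q}_\ell)$. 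A Hilbert-irreducibility-type argument applied to this arithmetic family --- viewed as a compatible system of Galois representations parametrised by a variety over a number field --- then shows that the set of $\overline{\mathbb{Q}}$-points at which the arithmetic $\ell$-adic image fails to be open in $G(\mathbb{Q}_\ell)$ is thin. Choosing $s$ outside both the Hodge locus and this thin set produces the desired $Y$.

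The main obstacle is this last specialisation step: one needs a clean Hilbert irreducibility principle for the arithmetic family on $\operatorname{Sh}(G, X_G)$ that controls the $\ell$-adic image on a \emph{single} fibre. This requires choosing neat level structures, relating the geometric monodromy to the arithmetic one, and tracking the effect of base change to a number field --- precisely where the original arguments of Serre and Noot are most delicate. Once openness of $G_\ell$ in $G(\mathbb{Q}_\ell)$ is established, combined with the a priori inclusion $G_\ell(Y)^0 \subseteq G \otimes_\mathbb{Q} \mathbb{Q}_\ell$, Zariski density of the image --- and hence the Mumford--Tate conjecture for $Y$ --- follows immediately.
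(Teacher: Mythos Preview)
The paper does not contain a proof of this statement: Theorem~\ref{thm_SerreNoot} is stated with attribution to Serre and Noot and cited from \cite{MR1355123}, then used as a black box in the proof of Proposition~\ref{prop:DescentToNumberFields}. There is therefore no proof in the paper to compare your proposal against.

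That said, your sketch is broadly faithful to the argument in the cited reference. Noot's proof does proceed via the Shimura variety attached to $(G,h)$, the Hodge embedding into a Siegel modular variety, the fact that the Hodge-generic locus is the complement of a countable union of proper subvarieties, and a Hilbert-irreducibility/specialisation argument to control the $\ell$-adic image at a well-chosen algebraic point. You have also correctly identified where the genuine work lies: making the specialisation step precise so that the arithmetic $\ell$-adic image at a single fibre is open in $G(\mathbb{Q}_\ell)$. One small point worth tightening is the claim that $\overline{\mathbb{Q}}$-points avoid a \emph{countable} union of proper subvarieties; density alone is not enough for this, and in practice one handles the Hodge locus and the thin set simultaneously via the Hilbert irreducibility argument rather than by a pure density statement.
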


We shall also need the following fact, whose proof is immediate:
\begin{lemma}
If the Mumford-Tate conjecture holds for a product $A \times B$, then it holds for both $A$ and $B$. 
\end{lemma}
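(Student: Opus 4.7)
The plan is to exploit functoriality of both constructions under the projections $A \times B \to A$ and $A \times B \to B$. The key input is Proposition \ref{prop_Products} together with the $\ell$-adic analogue recalled at the end of Section \ref{subsect:GalReps}: both $\MT(A \times B)$ and $G_\ell(A \times B)^0$ lie inside the product of the corresponding groups of $A$ and $B$, and surject onto each factor under the natural projection. (The stated version of Proposition \ref{prop_Products} is phrased for $\Hg$, but since $\MT$ differs from $\Hg$ only by the common central torus of homotheties, the surjectivity transfers without change; analogously for $G_\ell^0$ versus $H_\ell$.)

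The first step is to identify $V_\ell(A \times B)$ with $V_\ell(A) \oplus V_\ell(B)$, so that the two algebraic groups $\MT(A \times B) \otimes_{\mathbb{Q}} \mathbb{Q}_\ell$ and $G_\ell(A \times B)^0$ are subgroups of the same ambient group sitting inside $\GL_{V_\ell(A)} \times \GL_{V_\ell(B)}$. The Mumford-Tate conjecture for $A \times B$ is precisely the equality of these two subgroups. Applying the projection $\operatorname{pr}_A$ onto the first factor, and using that taking the image of a morphism of algebraic groups commutes with the flat base change $-\otimes_{\mathbb{Q}} \mathbb{Q}_\ell$, I obtain
$$
\MT(A) \otimes_{\mathbb{Q}} \mathbb{Q}_\ell \;=\; \operatorname{pr}_A\bigl(\MT(A \times B)\bigr) \otimes_{\mathbb{Q}} \mathbb{Q}_\ell \;=\; \operatorname{pr}_A\bigl(G_\ell(A \times B)^0\bigr) \;=\; G_\ell(A)^0,
$$
which is the Mumford-Tate conjecture for $A$. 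The identical argument with the projection onto the second factor handles $B$.

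There is essentially no obstacle here: the statement is a purely formal consequence of the compatibility of the Hodge-theoretic and $\ell$-adic constructions with products, which is why the author describes the proof as immediate. The only minor point worth recording explicitly is the commutation of scheme-theoretic image with flat base change, together with the fact that the surjections $\MT(A \times B) \twoheadrightarrow \MT(A)$ and $G_\ell(A \times B)^0 \twoheadrightarrow G_\ell(A)^0$ are literal equalities of projections, so that the chain of identifications above is well-defined.
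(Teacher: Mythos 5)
The paper gives no proof of this lemma, calling it immediate; your projection argument is exactly the intended one and is correct. The only micro-step you leave implicit at the last equality, namely $\operatorname{pr}_A\bigl(G_\ell(A\times B)^0\bigr) = G_\ell(A)^0$, follows because the image is a connected closed subgroup of finite index in $G_\ell(A)$ and a connected algebraic group has no proper closed finite-index subgroup, so it must equal $G_\ell(A)^0$.
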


\begin{proof}[Proof of Proposition \ref{prop:DescentToNumberFields}]
By applying Theorem \ref{thm_SerreNoot} to $X=A \times B$ and to any prime $\ell$ we get an abelian variety $Y$ defined over a number field $K$, satisfying the Mumford-Tate conjecture, and such that $\MT(Y) \cong \MT(X)$ (hence $\Hg(Y) \cong \Hg(X)$). From the proof of \cite[Theorem 1.7]{MR1355123} we see that $V_Y := H_1(Y(\mathbb{C}), \mathbb{Q})$ and $V_X := H_1(X(\mathbb{C}), \mathbb{Q})$ are isomorphic as representations of $\MT(Y) \cong \MT(X)$. In particular, $\operatorname{End}^0_{\overline{\mathbb{Q}}}(Y)  \cong V_Y^{\Hg(Y)} \cong V_X^{\Hg(X)} \cong \operatorname{End}_{\overline{\mathbb{Q}}}^0(X)$. Let $\pi_A, \pi_B \in \operatorname{End}_{\overline{\mathbb{Q}}}^0(X)$ be the projectors on the factors $A, B$. Under the above isomorphism, these correspond to nontrivial orthogonal idempotents $\pi_{A'}, \pi_{B'} \in \operatorname{End}^0_{\overline{\mathbb{Q}}}(Y)$, which shows that $Y$ is isogenous to a product $A' \times B'$. Moreover, the Hodge groups of $A'$ and $B'$ can be described as $\pi_{A'}( \Hg(Y)) \cong \pi_A (\Hg(X)) = \Hg(A)$, $\pi_{B'}( \Hg(Y)) \cong \pi_B (\Hg(X)) = \Hg(B)$. Since the defining representation of the Hodge group determines the dimension and endomorphism algebra of an abelian variety, we obtain that $A$ and $A'$ (resp.~$B$ and $B'$) have the same dimension and isomorphic endomorphism algebras.
Finally, the previous lemma shows that the Mumford-Tate conjecture holds for both $A'$ and $B'$ (since they are factors of $Y$ and Mumford-Tate holds for $Y$).
\end{proof}

\section{Constraints satisfied by strongly iso-Kummerian pairs}\label{sect:PartialClassification}
In this section we find necessary conditions that every pair of strongly iso-Kummerian abelian varieties over a number field must satisfy. This doesn't quite lead to a complete classification, but shows that non-trivial examples must be quite special. In particular, the second part of Theorem \ref{thm:IntroClassification}, that we will prove below, shows that no non-trivial examples may be found among abelian varieties all of whose simple factors have odd dimension, unless some of these factors admit an action by an order in a CM field (i.e., are of type IV in the Albert classification). Given the examples we construct in the next section, this result is thus in a sense the best possible. 

It is useful to take the following point of view: if \eqref{eq:Question} holds, then in particular we have
\[
K( (A_1 \times A_2)[\ell^n] ) = K( A_1[\ell^n] )  = K( A_2[\ell^n] )
\]
for all positive integers $n$. We now show that this implies $\dim G_\ell(A_1 \times A_2) = \dim G_\ell(A_1) = \dim G_\ell(A_2)$. This is based on the following lemma:
\begin{lemma}
Let $A$ be an abelian variety defined over a number field $K$ and let $\ell$ be a prime number. Let $d$ be the dimension of $G_\ell(A)$. There exist positive constants $c_1, c_2$, depending on $A, \ell$ and $K$, such that for all $n \geq 1$
\[
c_1 \ell^{nd} \leq [ K(A[\ell^n]) : K ] \leq c_2 \ell^{nd}.
\]
\end{lemma}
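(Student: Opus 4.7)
The plan is to rewrite the degree $[K(A[\ell^n]):K]$ as the order of the finite group $G_{\ell^n} := \operatorname{im}(\rho_{\ell^n})$, and then to analyse how these orders behave by exploiting the $\ell$-adic Lie structure of the image $G_{\ell^\infty} \subseteq \operatorname{Aut}(T_\ell A) \cong \GL_{2g}(\mathbb{Z}_\ell)$. The starting point is the classical fact (essentially due to Serre) that any closed subgroup of $\GL_{2g}(\mathbb{Z}_\ell)$ is automatically an $\ell$-adic Lie subgroup, and that its dimension as such coincides with the dimension of its Zariski closure. Applied to $G_{\ell^\infty}$, this means we may regard it as a compact $\ell$-adic analytic group of dimension exactly $d$.

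Next, I would introduce the principal congruence filtration $K_n = \ker\!\bigl(\GL_{2g}(\mathbb{Z}_\ell) \to \GL_{2g}(\mathbb{Z}/\ell^n\mathbb{Z})\bigr)$, which gives the key identity $|G_{\ell^n}| = [G_{\ell^\infty} : G_{\ell^\infty} \cap K_n]$. For $n \geq n_0$ sufficiently large (depending on $\ell$), the $\ell$-adic logarithm converges on $K_n$ and identifies $G_{\ell^\infty} \cap K_n$ with a free $\mathbb{Z}_\ell$-lattice of rank $d$ inside the Lie algebra $\mathfrak{g}$ of $G_{\ell^\infty}$; moreover, successive steps of the congruence filtration rescale this lattice by $\ell$, so
\[
[G_{\ell^\infty} \cap K_{n_0} : G_{\ell^\infty} \cap K_n] = \ell^{(n-n_0)d}
\]
for every $n \geq n_0$. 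Multiplying by the (finite, fixed) index $[G_{\ell^\infty} : G_{\ell^\infty} \cap K_{n_0}]$, one obtains
\[
|G_{\ell^n}| = C \cdot \ell^{nd} \qquad \text{for all } n \geq n_0,
\]
where $C$ is a positive constant depending on $A, K, \ell$. Absorbing the finitely many smaller values of $n$ by adjusting the constants gives the required two-sided bound $c_1 \ell^{nd} \leq [K(A[\ell^n]):K] \leq c_2 \ell^{nd}$.

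The main obstacle is precisely the input from $\ell$-adic Lie theory: one needs both that $G_{\ell^\infty}$ is an $\ell$-adic Lie group whose dimension equals $\dim G_\ell(A) = d$, and that the $\log$-$\exp$ correspondence carries the congruence filtration on $G_{\ell^\infty} \cap K_{n_0}$ to the obvious $\ell$-adic filtration on a free $\mathbb{Z}_\ell$-lattice of rank $d$. Both facts are standard (see e.g.\ Serre's \emph{Lie Algebras and Lie Groups} and \emph{Abelian $\ell$-adic Representations and Elliptic Curves}), and once they are in hand the argument is merely a matter of bookkeeping. One should also note that the exact value of the constants will depend on the choice of $n_0$ (hence on $\ell$), which is why the statement is content with multiplicative constants rather than an asymptotic equality.
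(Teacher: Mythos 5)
Your overall strategy --- identifying $[K(A[\ell^n]):K]$ with $|G_{\ell^n}| = [G_{\ell^\infty} : G_{\ell^\infty} \cap K_n]$ and then analysing the congruence filtration via the $\ell$-adic $\log$--$\exp$ correspondence --- is sound and is essentially the argument behind the lemma the paper simply cites (Lemma 12 of Lombardo--Perucca). However, there is a genuine gap in your first step, and it sits exactly where the real content lies.

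The ``classical fact'' you invoke, that for \emph{any} closed subgroup $H$ of $\GL_{2g}(\mathbb{Z}_\ell)$ the dimension of $H$ as an $\ell$-adic Lie group coincides with the dimension of its Zariski closure, is false in general. Lazard's theorem does give you the $\ell$-adic analytic structure on $H$, but the two dimensions can genuinely differ: take units $u_1, u_2 \in 1 + \ell \mathbb{Z}_\ell$ whose $\ell$-adic logarithms are $\mathbb{Q}$-linearly independent, and let $H$ be the closure of the cyclic group generated by $\operatorname{diag}(u_1, u_2)$ inside the diagonal torus of $\GL_2(\mathbb{Z}_\ell)$. Then $H \cong \mathbb{Z}_\ell$ has $\ell$-adic Lie dimension $1$, but no nontrivial monomial character $x^a y^b$ vanishes on $H$, so its Zariski closure is the full $2$-dimensional torus $\mathbb{G}_m^2$. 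What is actually needed in your argument is that $G_{\ell^\infty}$ be \emph{open} in $G_\ell(A)(\mathbb{Q}_\ell)$ --- this is what forces the $\ell$-adic Lie dimension to equal $d = \dim G_\ell(A)$. That openness is Bogomolov's theorem, a result specific to $\ell$-adic representations of abelian varieties over number fields, not a formal consequence of $\ell$-adic Lie theory. Once you replace your opening assertion with an appeal to Bogomolov, the rest of your sketch (uniformity of a small open subgroup, $\log$--$\exp$ identifying the deep congruence filtration with the $\ell$-power filtration on a rank-$d$ lattice) goes through and reproduces the cited lemma.
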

\begin{proof}
Recall that $G_{\ell^n}$ denotes the image of the modulo-$\ell^n$ representation attached to $A/K$. By \cite[Lemma 12]{lombardo_perucca_2019}, there exists $n_0$ such that for $n \geq n_0$ we have $\#G_{\ell^{n+1}} = \ell^d \#G_{\ell^n}$ for all $n \geq n_0$. Thus for sufficiently large $n$ we have $\# G_{\ell^n} =  \# G_{\ell^{n_0}} \cdot \ell^{(n-n_0)d}$. Since $[ K(A[\ell^n]) : K ] = \#G_{\ell^n}$, this concludes the proof.
\end{proof}

 Set $A=A_1 \times A_2$. As remarked above, the equality
 $K(A_1[\ell^n])=K(A_2[\ell^n])$ implies $K(A_1[\ell^n]) = K(A_2[\ell^n]) = K(A[\ell^n])$. By the previous lemma, the degrees over $K$ of these fields grow (up to a bounded factor) as $\ell^{n \dim G_\ell(A_1)}$, $\ell^{n \dim G_\ell(A_2)}$ and $\ell^{n \dim G_\ell(A)}$ respectively. As $n \to \infty$, this implies $\dim G_\ell(A) = \dim G_\ell(A_1) = \dim G_\ell(A_2)$ as claimed; the analogous equality also holds with $G_\ell$ replaced by $H_\ell$.
Since we also have canonical surjective maps $H_\ell(A) \to H_\ell(A_i)$ for $i=1, 2$, the dimension condition implies that both maps $H_\ell(A) \to H_\ell(A_i)$ have finite kernel: this is then a \textit{necessary} condition for \eqref{eq:Question} to hold. %
This discussion implies the following lemma (notice that if Mumford-Tate holds for $A_1$ and $A_2$, then it also holds for $A_1 \times A_2$ \cite{MR4009176}):
\begin{lemma}\label{lemma:IsoKummerianImplesSameHodgeDimension}
Let $A_1, A_2$ be two strongly iso-Kummerian abelian varieties over a number field $K$. Suppose that the Mumford-Tate conjecture holds for $A_1$ and $A_2$. Then $\dim \Hg(A_1 \times A_2) = \dim \Hg(A_1) = \dim\Hg(A_2)$, and the canonical projections $\Hg(A_1 \times A_2) \to \Hg(A_i)$, $\MT(A_1 \times A_2) \to \MT(A_i)$ are isogenies for $i=1,2$.
\end{lemma}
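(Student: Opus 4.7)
The proof is essentially contained in the discussion immediately preceding the statement, so my plan is mostly to assemble that material into a clean argument and fill in the few remaining details.

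First I would fix a prime $\ell$ and observe that the hypothesis $K(A_1[\ell^n]) = K(A_2[\ell^n])$ for all $n$ forces
\[
K(A_1[\ell^n]) = K(A_2[\ell^n]) = K((A_1 \times A_2)[\ell^n]),
\]
since the compositum of the two fields on the left is, by definition, the $\ell^n$-torsion field of the product. Applying the degree estimate from the preceding lemma to each of the three abelian varieties $A_1$, $A_2$, $A_1 \times A_2$, the fact that the three degrees agree up to a bounded multiplicative factor as $n \to \infty$ yields
\[
\dim G_\ell(A_1) = \dim G_\ell(A_2) = \dim G_\ell(A_1 \times A_2).
\]
Since $G_\ell$ always contains the homotheties, we have $\dim H_\ell(\cdot) = \dim G_\ell(\cdot) - 1$ in each case, so the same equality holds for the dimensions of the $H_\ell$'s.

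Next I would invoke the Mumford-Tate conjecture. By assumption it holds for $A_1$ and $A_2$, and by \cite{MR4009176} (cited in the parenthetical remark) it then also holds for $A_1 \times A_2$. Consequently, for each of the three varieties $X \in \{A_1, A_2, A_1 \times A_2\}$ we have $H_\ell(X) \cong \Hg(X) \times_{\mathbb{Q}} \mathbb{Q}_\ell$, and therefore $\dim \Hg(X)$ coincides with the $\mathbb{Q}_\ell$-dimension of $H_\ell(X)$ just computed. This yields the desired equality $\dim \Hg(A_1 \times A_2) = \dim \Hg(A_1) = \dim \Hg(A_2)$.

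Finally, for the last assertion I would use Proposition \ref{prop_Products}: the canonical projections $\Hg(A_1 \times A_2) \to \Hg(A_i)$ are surjective morphisms between connected reductive $\mathbb{Q}$-algebraic groups. Having just shown that source and target have the same dimension, each such surjection has a $0$-dimensional (hence finite) kernel, so it is an isogeny. The corresponding statement for the Mumford-Tate groups then follows from the relation $\dim \MT(X) = \dim \Hg(X) + 1$ together with the compatibility of the projections $\MT(A_1 \times A_2) \to \MT(A_i)$ with the central tori of homotheties. There is no real obstacle here; the only mildly delicate point is making sure Mumford-Tate is properly invoked to pass from the $\ell$-adic dimension equality to the Hodge-theoretic one.
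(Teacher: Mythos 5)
Your proof is correct and follows essentially the same route as the paper: identify the three torsion fields, apply the degree-growth lemma to equate $\dim G_\ell$ (hence $\dim H_\ell$) for $A_1$, $A_2$, and $A_1\times A_2$, invoke the Mumford-Tate conjecture (which propagates to the product by \cite{MR4009176}) to transfer to Hodge groups, and conclude finiteness of the kernels from surjectivity plus equal dimension. No gaps.
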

 We can now prove Theorem \ref{thm:IntroClassification}.

\begin{proof}[Proof of Theorem \ref{thm:IntroClassification}]
By symmetry, it suffices to show that the simple isogeny factor $B_i$ of $(A_1)_{\overline{K}}$ is also an isogeny factor of $(A_2)_{\overline{K}}$. Suppose by contradiction that this is not the case. Notice that over $\overline{K}$ there is a surjective homomorphism of abelian varieties $A_1 \times A_2 \to B_i \times A_2$, which gives $\dim H_\ell(A_1 \times A_2) \geq \dim H_\ell(B_i \times A_2)$. Under the assumptions of the theorem, \cite[Corollaries 4.4 and 4.5]{MR3494170} give
\[
\dim H_\ell(A_1 \times A_2) \geq \dim H_\ell(B_i \times A_2) = \dim H_\ell(B_i) + \dim H_\ell(A_2) > \dim H_\ell(A_2),
\]
contradicting the necessary condition we found above. Notice that we have used that the group $H_\ell(B_i)$ is positive-dimensional for any abelian variety $B_i$ over any number field. This follows for example from the fact that if $H_\ell(B_i)$ is trivial, then Tate's conjecture on endomorphisms (proved by Faltings) implies
\[
\operatorname{End}_{\overline{K}}(B_i) \otimes \mathbb{Q}_\ell \cong \operatorname{End}_{H_\ell(B_i)}(T_\ell B_i \otimes \mathbb{Q}_\ell)  \cong \operatorname{Mat}_{2\dim(B_i) \times 2\dim(B_i)} (\mathbb{Q}_\ell),
\]
which is impossible for dimension reasons (here we use that $K$ is of characteristic $0$).
\end{proof}

\begin{remark}
The conclusion in Theorem \ref{thm:IntroClassification} concerns the existence of isogenies over $\overline{K}$ (or equivalently, over a suitable finite extension of the ground field). Generalising \cite[Example 8.5]{MR1981599}, we show that passing to an extension is necessary: indeed, consider two abelian varieties $A, B$ over $\mathbb{Q}$ and a non-trivial quadratic character $\chi$ of $\operatorname{Gal}(\overline{\mathbb{Q}}/\mathbb{Q})$. Denote by $A^\chi$ (respectively $B^\chi$) the quadratic twist of $A$ (resp.~$B$) with character $\chi$. Finally set $A_1 = A \times A^{\chi} \times B$ and $A_2 = A \times B^{\chi} \times B$. For generic choices of $A$ and $B$, there are no non-trivial isogenies between $A, B, A^{\chi}, B^{\chi}$ defined over $\mathbb{Q}$. The modulo-$\ell^n$ Galois representation attached to $A_1$ sends $\sigma \in \abGal{\mathbb{Q}}$ to $\left( \rho_{A, \ell^n}(\sigma), \chi(\sigma) \rho_{A,\ell^n}(\sigma), \rho_{B,\ell^n}(\sigma)  \right)$, while for $A_2$ we have $\rho_{A_2, \ell^n}(\sigma) = \left( \rho_{A, \ell^n}(\sigma), \chi(\sigma) \rho_{B,\ell^n}(\sigma), \rho_{B,\ell^n}(\sigma)  \right)$. It is easy to check that $\sigma$ belongs to the kernel of $\rho_{A_1, \ell^n}$ if and only if $\rho_{A, \ell^n}(\sigma) = \operatorname{Id}, \rho_{B,\ell^n}(\sigma) = \operatorname{Id}$, and $\chi(\sigma) = +1$, and that the same conditions are also equivalent to $\sigma$ being in the kernel of $\rho_{A_2,\ell^n}$. Since the torsion fields $\mathbb{Q}(A_1[\ell^n]), \mathbb{Q}(A_2[\ell^n])$ correspond by Galois theory to the kernels of $\rho_{A_1, \ell^n}, \rho_{A_2, \ell^n}$ respectively, this shows that $\mathbb{Q}(A_1[\ell^n])=\mathbb{Q}(A_2[\ell^n])$ holds for all primes $\ell$ and all positive integers $n$. In particular, $A_1, A_2$ are strongly iso-Kummerian without having the same $\mathbb{Q}$-isogeny factors.
\end{remark}

We have shown above that strongly iso-Kummerian abelian varieties have isogenous Mumford-Tate groups. As a partial converse, we have the following proposition:
\begin{proposition}\label{prop:MTIsomorphic}
Let $A_1/K_1, A_2/K_2$ be abelian varieties defined over number fields $K_1, K_2$. Suppose that 
the canonical projections $\MT(A_1 \times A_2) \to \MT(A_i)$ are isomorphisms, for $i=1,2$. There exists a number field $L$, containing both $K_1$ and $K_2$, such that the base-changes of $A_1, A_2$ to $L$ satisfy
\[
L(A_1[\ell^\infty]) = L(A_2[\ell^\infty])
\]
for all prime numbers $\ell$.
\end{proposition}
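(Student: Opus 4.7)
The plan is to pass to a sufficiently large common finite extension $L$ of $K_1$ and $K_2$ and then to compare the $\ell$-adic algebraic monodromy groups of $A_1$, $A_2$ and $A_1 \times A_2$. Writing $\rho_{\ell^\infty}$ and $\rho_{i,\ell^\infty}$ for the $\ell$-adic representations of $\operatorname{Gal}(\overline L/L)$ attached to $A_1 \times A_2$ and to $A_i$ respectively, the desired equality $L(A_1[\ell^\infty]) = L(A_2[\ell^\infty])$ amounts to $\ker \rho_{1,\ell^\infty} = \ker \rho_{2,\ell^\infty}$. Since $\ker \rho_{\ell^\infty} = \ker \rho_{1,\ell^\infty} \cap \ker \rho_{2,\ell^\infty}$, it will suffice to show that the natural projections $\mathrm{Im}(\rho_{\ell^\infty}) \to \mathrm{Im}(\rho_{i,\ell^\infty})$ are injective for both $i=1,2$.

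First I would invoke the theorem of Serre that one can choose a single finite extension $L$ of the compositum $K_1K_2$ such that, for every prime $\ell$, the image of $\operatorname{Gal}(\overline L/L)$ in $G_\ell(A_1 \times A_2)$ is Zariski dense in the connected component $G_\ell(A_1 \times A_2)^0$ (and similarly for each $A_i$, since connectedness is inherited by quotient monodromies). The crucial feature of this step is its uniformity in $\ell$: it is what permits a single $L$ to work for all primes at once, rather than a prime-dependent one.

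The main ingredient is then the (unconditional) inclusion $G_\ell(X)^0 \subseteq \MT(X) \otimes_{\mathbb{Q}} \mathbb{Q}_\ell$, valid for every abelian variety $X$ over a number field (Deligne, Piatetski-Shapiro, Borovoi). The projection $A_1 \times A_2 \to A_i$ is compatible with the surjection of algebraic monodromies $G_\ell(A_1 \times A_2)^0 \twoheadrightarrow G_\ell(A_i)^0$ and with the projection $\MT(A_1 \times A_2) \otimes \mathbb{Q}_\ell \to \MT(A_i) \otimes \mathbb{Q}_\ell$, the latter being an isomorphism by hypothesis. A short diagram chase using these two inclusions shows that the map $G_\ell(A_1 \times A_2)^0 \to G_\ell(A_i)^0$ has trivial kernel as a morphism of algebraic groups over $\mathbb{Q}_\ell$, and is therefore an isomorphism. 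Passing to $\mathbb{Q}_\ell$-points yields the required injectivity of $\mathrm{Im}(\rho_{\ell^\infty}) \to \mathrm{Im}(\rho_{i,\ell^\infty})$, which completes the argument.

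The main obstacle is arranging the connectedness uniformly in $\ell$, since one needs a single $L$ that works for every prime simultaneously; this is precisely what Serre's theorem provides. Once this is in hand, the remainder of the proof is essentially a formal diagram chase exploiting the standard containment $G_\ell^0 \subseteq \MT \otimes \mathbb{Q}_\ell$ together with the hypothesis that the Mumford-Tate projections are isomorphisms.
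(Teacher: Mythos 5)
Your argument is correct and matches the paper's proof in all essential respects: both pass to a finite extension $L$ over which all the $\ell$-adic monodromy groups are connected (citing Serre/Larsen--Pink for uniformity in $\ell$), both invoke the unconditional Deligne inclusion $G_\ell(X)^0 \subseteq \MT(X)\otimes\mathbb{Q}_\ell$, and both conclude by a diagram chase using the hypothesis that $\MT(A_1\times A_2)\to\MT(A_i)$ is an isomorphism. The only cosmetic difference is that you detour through injectivity at the level of algebraic groups before passing to $\mathbb{Q}_\ell$-points, whereas the paper argues directly with the Galois images inside $\MT(\cdot)(\mathbb{Q}_\ell)$; this is a minor variation and does not change the substance.
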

\begin{proof}
Let $K$ be any number field containing both $K_1$ and $K_2$.  For any abelian variety $A$ over a number field $K$, by \cite[I, Proposition 6.2]{DeligneInclusion} we know that the connected component of the Zariski closure of $G_{A,\ell^\infty}$ is contained in $\MT(A)_{\mathbb{Q}_\ell}$ for all $\ell$. By \cite[p.~17]{MR1730973}, or equivalently \cite[Proposition 6.14]{MR1150604}, we also know that there is a finite extension of $K$ for which the Zariski closure of $G_{A,\ell^\infty}$ is connected for all $\ell$. Letting $L$ be such an extension for the abelian variety $A_1 \times A_2 / K$, we obtain that for all primes $\ell$ the $\ell$-adic Galois representations attached to $A_1 \times A_2, A_1$, and $A_2$ over $L$ land in the $\mathbb{Q}_\ell$-points of the respective Mumford-Tate groups.

Let $A:=A_1 \times A_2$ and denote by $\rho_{A_1, \ell^\infty}, \rho_{A_2, \ell^\infty}$ and $\rho_{A, \ell^\infty}$ the corresponding Galois representations. 
For $i=1,2$ we have a commutative diagram
\[\xymatrixcolsep{4pc}
\xymatrix{
\abGal{L} \ar[r]^-{\rho_{A,\ell^\infty}}
\ar[dr]_{\rho_{A_i,\ell^\infty}} & G_{A,\ell^\infty} \ar@{->>}[d] \ar@{^{(}->}[r] &  \MT(A)(\mathbb{Q}_\ell) \ar@{->>}[d] \\ %
 &  G_{A_i,\ell^\infty} \ar@{^{(}->}[r] &  \MT(A_i)(\mathbb{Q}_\ell). %
}
\]
By assumption, the vertical map $\MT(A)(\mathbb{Q}_\ell) \to \MT(A_i)(\mathbb{Q}_\ell)$ is injective, hence an isomorphism. By an easy diagram chasing, this implies that $\pi_i : G_{A, \ell^\infty} \to G_{A_i, \ell^\infty}$ is injective (hence, again, an isomorphism). 
It follows that $\rho_{A_i, \ell^\infty}$ and $\rho_{A,\ell^\infty}$ have the same kernel for $i=1,2$, hence that $L(A_1[\ell^\infty]) = L(A[\ell^\infty])= L (A_2[\ell^\infty])$ as claimed. 
\end{proof}

\section{Mumford-Tate groups of products of CM abelian varieties}\label{sect:CMMT}

Let $A_1, A_2$ be geometrically simple CM abelian varieties (defined over a number field, or over $\mathbb{C}$), having complex multiplication by two CM fields $E_1$ and $E_2$, with simple CM types $\Phi_1, \Phi_2$ respectively. Write $g_i = \dim A_i$.
We now explain how to describe the Mumford-Tate group of $A_1 \times A_2$ in terms of integral linear algebra, and in particular how to check whether one of the canonical projections $\MT(A_1 \times A_2) \to \MT(A_i)$ is an isogeny or an isomorphism (which, in the light of Lemma \ref{lemma:IsoKummerianImplesSameHodgeDimension} and Proposition \ref{prop:MTIsomorphic}, is closely related to the problem of deciding whether $A_1, A_2$ are strongly iso-Kummerian over some number field). It is straightforward to extend all we are going to describe to the product of an arbitrary number of CM abelian varieties, but the notation quickly becomes cumbersome, so for simplicity of exposition we limit ourselves to the case of two factors. For a related but distinct approach see also Section 6 of \cite{emory2020satotate}, which however only applies to certain hyperelliptic Jacobians.

Let $L_1, L_2$ be the Galois closures of $E_1/\mathbb{Q}, E_2/\mathbb{Q}$ respectively, and for $i=1,2$ let $H_i$ be the subgroup of $G_i = \operatorname{Gal}(L_i/\mathbb{Q})$ given by $\operatorname{Gal}(L_i/E_i)$. As explained in Section \ref{subsec:CMTypes}, the CM types $\Phi_1, \Phi_2$ can be considered as subsets of $H_i \backslash G_i$, and we obtain reflex subgroups $H_i' < G_i$, reflex fields $E_i^* \subseteq L_i$, and reflex CM types $\Phi_1^*, \Phi_2^*$. Fix a finite Galois extension $L$ of $\mathbb{Q}$ containing both $L_1$ and $L_2$ (hence, a fortiori, both $E_1^*$ and $E_2^*$) and denote by $N_i : L \to E_i^*$ the norm function. We also denote by the same letter the induced map $T_{L} \to T_{E_i^*}$ between the corresponding algebraic tori. Finally, let $\phi_i = N_{\Phi_i^*} : T_{E_i^*} \to T_{E_i}$ be the reflex norm corresponding to $\Phi_i^*$.
The following is a consequence of the theory of complex multiplication, and essentially a reformulation of part of Theorem \ref{thm:CM}:
\begin{lemma}\label{lemma:MapToMTOfProduct}
The image of the map $T_L \xrightarrow{(N_1, N_2)} T_{E_1^*} \times T_{E_2^*} \xrightarrow{(\phi_1,\phi_2)} T_{E_1} \times T_{E_2}$ is $\MT(A_1 \times A_2)$.
\end{lemma}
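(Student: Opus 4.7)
The plan is to derive this from two ingredients: (i) the classical identification $\MT(A_i) = \operatorname{image}(\phi_i)$ of the Mumford--Tate group of a single simple CM abelian variety (e.g.\ \cite{MR713612,milneCM}), and (ii) a comparison of the joint Hodge cocharacter for $A_1 \times A_2$ with the image of the composite map $\Psi = (\phi_1, \phi_2) \circ (N_1, N_2)$.

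First I would verify that $N_i : T_L \to T_{E_i^*}$ is a surjective morphism of $\mathbb{Q}$-tori; on $\overline{\mathbb{Q}}$-points it reads $(x_\sigma)_{\sigma \in \Sigma_L} \mapsto \bigl(\prod_{\sigma|_{E_i^*} = \psi} x_\sigma\bigr)_{\psi \in \Sigma_{E_i^*}}$, which is visibly onto. Combined with (i), the image of $\phi_i \circ N_i$ therefore equals $\MT(A_i)$. Letting $G$ denote the image of $\Psi$, this yields $G \subseteq \MT(A_1) \times \MT(A_2)$ with surjective projections onto each factor, matching the general constraints on $\MT(A_1 \times A_2)$ recorded in Proposition \ref{prop_Products}(1). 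The real work is to pin down that $G$ coincides with $\MT(A_1 \times A_2)$ and not with some other intermediate subtorus.

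For the inclusion $\MT(A_1 \times A_2) \subseteq G$, I would exhibit a cocharacter $\tilde\mu : \mathbb{G}_{m, \mathbb{C}} \to T_{L, \mathbb{C}}$ -- essentially the cocharacter supported on the union $\tilde\Phi_1 \cup \tilde\Phi_2$ of the lifted CM types inside $\operatorname{Gal}(L/\mathbb{Q})$ -- such that $\Psi_\mathbb{C} \circ \tilde\mu$ equals the Hodge cocharacter $\mu_\infty$ of $A_1 \times A_2$. Since $G$ is a $\mathbb{Q}$-defined subtorus whose complexification contains $\operatorname{image}(\mu_\infty)$, the minimality of $\MT(A_1 \times A_2)$ forces it to sit inside $G$. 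The reverse inclusion $G \subseteq \MT(A_1 \times A_2)$ I would extract from Theorem \ref{thm:CM}: applied componentwise, it shows that after a suitable finite base change the joint $\ell$-adic Galois representation of $A_1 \times A_2$ lands inside $G(\mathbb{Q}_\ell)$, and a Chebotarev-density argument combined with the Mumford--Tate conjecture (known in the CM case) identifies the Zariski closure of this image with $\MT(A_1 \times A_2)_{\mathbb{Q}_\ell}$, yielding $\MT(A_1 \times A_2)_{\mathbb{Q}_\ell} = G_{\mathbb{Q}_\ell}$ and hence the required equality of $\mathbb{Q}$-tori.

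The main obstacle is the cocharacter bookkeeping needed for the forward inclusion: one has to consistently translate the CM types $\Phi_i \subseteq H_i \backslash G_i$ to subsets of $\operatorname{Gal}(L/\mathbb{Q})$ via the natural surjections $G \twoheadrightarrow G_i$, define $\tilde\mu$ in a Galois-equivariantly consistent way, and then check -- via the explicit formula \eqref{eq:ReflexNorm} for the reflex norm on characters and the analogous formula for the norm $N_i$ -- that applying $\Psi_\mathbb{C}$ to $\tilde\mu$ recovers the individual cocharacters $\mu_{\infty, A_i}$ on each factor. This is essentially diagram chasing on character groups of tori, but it is the notationally heaviest part of the argument and the only place where one actually uses that the CM types $\Phi_1, \Phi_2$ are compatibly embedded in the common Galois group of $L/\mathbb{Q}$.
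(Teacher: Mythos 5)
Your plan splits the proof into two inclusions, but as written both halves only establish the \emph{same} inclusion. The cocharacter argument correctly yields $\MT(A_1 \times A_2) \subseteq G$ (minimality of the Mumford--Tate group). But the ``reverse inclusion'' paragraph then asserts that the $\ell$-adic representation lands inside $G(\mathbb{Q}_\ell)$ and that the Mumford--Tate conjecture identifies the Zariski closure of its image with $\MT(A_1\times A_2)_{\mathbb{Q}_\ell}$: these two facts together only give $\MT(A_1 \times A_2)_{\mathbb{Q}_\ell} \subseteq G_{\mathbb{Q}_\ell}$ again, not the equality you claim. The missing step is that the image of the $\ell$-adic Galois representation is \emph{Zariski dense in} $G_{\mathbb{Q}_\ell}$, not merely contained in it. That is a substantive point: it requires knowing that (over a suitably enlarged base field, after trivializing the auxiliary characters $\varepsilon_i$) the $\ell$-adic representation is \emph{exactly} the map on $\mathbb{Z}_\ell$-points induced by $T_M \xrightarrow{N_{M/L}} T_L \to T_{E_1}\times T_{E_2}$, and that the norm $T_M \to T_L$ is surjective as a map of $\mathbb{Q}$-tori, so that the image of $T_M(\mathbb{Z}_\ell)$ is Zariski dense in the image of $T_L$. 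A ``Chebotarev-density argument'' by itself does not supply this; you need the class-field-theoretic description of Theorem~\ref{thm:CM} plus a good-reduction and base-change hypothesis to kill the characters $\varepsilon_i$ (as in Corollary~\ref{cor:GoodReduction} and Remark~\ref{rmk:EpsilonTrivialAtInfinity}), none of which appears in your sketch.

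Once the density is in place, your first, Hodge-theoretic inclusion becomes redundant: the paper proves the lemma in one shot by characterizing $\MT(A_1 \times A_2)$ (using the Mumford--Tate conjecture for CM varieties) as the smallest subtorus of $T_{E_1}\times T_{E_2}$ through which the $\ell$-adic representation factors, and then observing directly that this minimal torus is the image of $T_L \to T_{E_1} \times T_{E_2}$ because $T_M \to T_L$ is onto. So your cocharacter computation is a legitimate alternative route to the inclusion $\MT \subseteq G$ -- and arguably more in the spirit of the Hodge-theoretic definition of the Mumford--Tate group -- but it does not help with the other direction, which is where the real content of the lemma lies and where your sketch currently has a gap.
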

\begin{proof}
Fix a common (number) field of definition $K$ for $A_1$ and $A_2$. Enlarging $K$ if necessary, and replacing each $A_i$ by an isogenous variety (which does not alter their Mumford-Tate groups), we may assume that $K$ contains $L$, that $\operatorname{End}_K(A_i)=\mathcal{O}_{E_i}$ for $i=1,2$, and that $A_i$ has good reduction at all finite places of $K$. 
Let $M$ be the finite extension of $K$ corresponding to the open subgroup $K^\times \cdot \left(\prod_{v \text{ finite place of }K} U_v \times \prod_{v \text{ infinite}} K_v^\times\right)$ of $I_K$. Finally fix a prime $\ell$ for which $T_M, T_L, T_{E_i}$ and $T_{E_i^*}$ all have good reduction (any prime unramified in $M$ will work). For such a prime it makes sense to take the $\mathbb{Z}_\ell$-points of the tori $T_M, T_L, T_{E_i}$ and $T_{E_i^*}$, and
 $T_F(\mathbb{Z}_\ell) = (\mathcal{O}_F \otimes \mathbb{Z}_\ell)^\times = \prod_{\substack{v \text{ place of }F \\ \operatorname{char}(v)=\ell}} U_v$ for $F=M, L, E_i, E_i^*$.

We now use Theorem \ref{thm:CM} to describe the $\ell$-adic Galois representations attached to $A_1, A_2$ over $M$. Notice that the characters $\varepsilon_i$ are trivial: they are trivial on $U_v$ for all finite $v$ since $A_i$ has good reduction at $v$, and at all the infinite places by Remark \ref{rmk:EpsilonTrivialAtInfinity}. Thus these representations are obtained as the maps on $\mathbb{Z}_\ell$-points induced by the morphisms of algebraic tori
\[
T_M \xrightarrow{N_{M/L}} T_L \xrightarrow{N_i} T_{E_i^*} \xrightarrow{\phi_i} T_{E_i}.
\]
The simultaneous Galois action on $A_1 \times A_2$ is then the map on $\mathbb{Z}_\ell$-points induced by $T_M \xrightarrow{N_{M/L}} T_L \to T_{E_1^*} \times T_{E_2^*} \to T_{E_1} \times T_{E_2}$. Given that the Mumford-Tate conjecture holds for $A_1 \times A_2$ by \cite{MR228500}, the Mumford-Tate group of $A_1 \times A_2$ is the smallest subtorus $T$ of $T_{E_1} \times T_{E_2}$ with the property that (for all sufficiently large number fields $M$ and all primes $\ell$) the image of the $\ell$-adic Galois representation attached to $A_1 \times A_2 / M$ factors via $T(\mathbb{Q}_\ell)$. Since $T_M \xrightarrow{N_{M/L}} T_L$ is surjective, it is clear from the above description that the minimal such torus is precisely the image of the map $T_L \to T_{E_1^*} \times T_{E_2^*} \to T_{E_1} \times T_{E_2}$, as claimed. %
\end{proof}

We consider the commutative diagram (well-defined by the previous lemma)
\[\xymatrixcolsep{5pc}
\xymatrix{
T_L \ar@{->>}[r]^{(\phi_1 N_1, \phi_2 N_2)} \ar[d]^{(N_1, N_2)}  & \MT(A_1\times A_2) \ar@{->>}[r]^{\pi_i}  \ar@{^{(}->}[d] & \MT(A_i) \ar@{^{(}->}[d] \\
T_{E_1^*} \times T_{E_2^*}  \ar[r]_{(\phi_1, \phi_2)} & T_{E_1} \times T_{E_2} \ar@{->>}[r]_{p_i} & T_{E_i}
}
\]
and its dual
\[\xymatrixcolsep{5pc}
\xymatrix{
\widehat{T}_L & \widehat \MT(A_1\times A_2) \ar@{_{(}->}[l]_{N_1^* \phi_1^* + N_2^* \phi_2^*}    & \widehat \MT(A_i) \ar@{_{(}->}[l]_{\pi_i^*}  \\
\widehat T_{E_1^*} \times \widehat T_{E_2^*} \ar[u]^{N_1^* + N_2^*}   & \widehat T_{E_1} \times \widehat T_{E_2} \ar@{->>}[u] \ar[l]^{(\phi_1^*, \phi_2^*)} & \widehat T_{E_i} \ar@{_{(}->}[l]^{p_i^*} \ar@{->>}[u]
 }
\]
where $\pi_i, p_i$ are the canonical projections on the $i$-th factor, and we denote by $f^*$ the map induced on characters by a morphism $f$ of algebraic tori.%

The condition that $\pi_i : \MT(A_1 \times A_2) \to \MT(A_i)$ be an isomorphism (respectively, an isogeny) translates into $\pi_i^*$ being an isomorphism (respectively, having finite kernel and cokernel), and since $\pi_i : \MT(A_1 \times A_2) \to \MT(A_i)$ is certainly surjective (hence $\pi_i^*$ is injective), it suffices to check that $\pi_i^*$ is onto (resp.~has finite cokernel). By diagram chasing, this is equivalent to the composition $\widehat{T}_{E_i} \to \widehat{T}_{E_1} \times \widehat{T}_{E_2} \to \widehat{\MT}(A_1 \times A_2)$ being surjective (resp.~having finite cokernel).

As $\widehat{\MT}(A_1 \times A_2) \to \widehat{T}_L$ is injective, the kernel of the map $\widehat{T}_{E_1} \times \widehat{T}_{E_2} \to \widehat{\MT}(A_1 \times A_2)$ coincides with the kernel of $\widehat{T}_{E_1} \times \widehat{T}_{E_2} \xrightarrow{N_1^* \phi_1^* + N_2^*\phi_2^*} \widehat{T}_L$. Thus we may identify $\widehat{\MT}(A_1 \times A_2)$ with $\displaystyle \frac{\widehat{T}_{E_1} \times \widehat{T}_{E_2}}{ \ker (N_1^* \phi_1^* + N_2^*\phi_2^*) }$, and under this identification $\pi_i^*$ is surjective (resp.~has finite cokernel) if and only if $p_i^*\left( \widehat{T}_{E_i} \right)$  -- which is $\widehat{T}_{E_1} \times \{0\}$ or $\{0\} \times \widehat{T}_{E_2} $ according to whether $i=1$ or $2$ -- surjects onto $\displaystyle \frac{\widehat{T}_{E_1} \times \widehat{T}_{E_2}}{ \ker (N_1^* \phi_1^* + N_2^*\phi_2^*) }$ (resp.~maps to it with finite cokernel). We have thus established:
\begin{lemma}\label{lemma:MTIsoLinearAlgebra}
The canonical projection $\pi_1$, respectively $\pi_2$, is an isomorphism if and only if 
\[
\widehat{T}_{E_1} \times \{0\} + \ker (N_1^* \phi_1^* + N_2^*\phi_2^*) = \widehat{T}_{E_1} \times \widehat{T}_{E_2},
\]
respectively $
\{0\} \times \widehat{T}_{E_2} + \ker (N_1^* \phi_1^* + N_2^*\phi_2^*) = \widehat{T}_{E_1} \times \widehat{T}_{E_2},
$. Similarly, the canonical projection $\pi_1$, respectively $\pi_2$, is an isogeny if and only $
\{0\} \times \widehat{T}_{E_2} + \ker (N_1^* \phi_1^* + N_2^*\phi_2^*)$, respectively $
\widehat{T}_{E_1} \times \{0\} + \ker (N_1^* \phi_1^* + N_2^*\phi_2^*)$, has finite index in $\widehat{T}_{E_1} \times \widehat{T}_{E_2}$.
\end{lemma}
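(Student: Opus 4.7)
The plan is to translate the question about morphisms of tori into one about character lattices, exploiting the anti-equivalence between algebraic $\mathbb{Q}$-tori and finitely generated free $\mathbb{Z}$-modules with continuous $\operatorname{Gal}(\overline{\mathbb{Q}}/\mathbb{Q})$-action. Under this anti-equivalence, a morphism $f : S \to T$ of tori is surjective (resp.~an isomorphism, resp.~an isogeny) if and only if $f^* : \widehat{T} \to \widehat{S}$ is injective (resp.~an isomorphism, resp.~has finite kernel and cokernel). The first step is to note that $\pi_i : \MT(A_1 \times A_2) \to \MT(A_i)$ is surjective by the Mumford-Tate analogue of Proposition \ref{prop_Products}; dually, $\pi_i^*$ is injective. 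Therefore $\pi_i$ is an isomorphism (resp.~an isogeny) iff $\pi_i^*$ is surjective (resp.~has finite cokernel).

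Next I would identify $\widehat \MT(A_1 \times A_2)$ with an explicit quotient of $\widehat T_{E_1} \times \widehat T_{E_2}$. By Lemma \ref{lemma:MapToMTOfProduct}, the morphism $T_L \to T_{E_1} \times T_{E_2}$ factors as $T_L \twoheadrightarrow \MT(A_1 \times A_2) \hookrightarrow T_{E_1} \times T_{E_2}$; dualising gives an injection $\widehat \MT(A_1 \times A_2) \hookrightarrow \widehat T_L$ and a surjection $\widehat T_{E_1} \times \widehat T_{E_2} \twoheadrightarrow \widehat \MT(A_1 \times A_2)$. Because the former is injective, the kernel of the latter coincides with the kernel of the composite $\widehat T_{E_1} \times \widehat T_{E_2} \xrightarrow{(\phi_1^*, \phi_2^*)} \widehat T_{E_1^*} \times \widehat T_{E_2^*} \xrightarrow{N_1^* + N_2^*} \widehat T_L$, which is exactly $\ker(N_1^*\phi_1^* + N_2^*\phi_2^*)$. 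Hence one obtains a canonical identification
\[
\widehat \MT(A_1 \times A_2) \cong \frac{\widehat T_{E_1} \times \widehat T_{E_2}}{\ker(N_1^*\phi_1^* + N_2^*\phi_2^*)}.
\]

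Finally, the commutative diagram preceding the lemma shows that under this identification $\pi_i^*$ is the composition of the inclusion $p_i^* : \widehat T_{E_i} \hookrightarrow \widehat T_{E_1} \times \widehat T_{E_2}$ of the $i$-th factor with the quotient map. Consequently $\pi_i^*$ is surjective (resp.~has finite cokernel) if and only if $p_i^*(\widehat T_{E_i}) + \ker(N_1^*\phi_1^* + N_2^*\phi_2^*)$ equals (resp.~has finite index in) $\widehat T_{E_1} \times \widehat T_{E_2}$. Unwinding $p_1^*(\widehat T_{E_1}) = \widehat T_{E_1} \times \{0\}$ and $p_2^*(\widehat T_{E_2}) = \{0\} \times \widehat T_{E_2}$ then yields the conditions in the statement. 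The whole argument is essentially bookkeeping; the only point requiring real care is the correct identification of the kernel of $\widehat T_{E_1} \times \widehat T_{E_2} \twoheadrightarrow \widehat \MT(A_1 \times A_2)$, for which the surjectivity of $T_L \twoheadrightarrow \MT(A_1 \times A_2)$ provided by Lemma \ref{lemma:MapToMTOfProduct} is essential.
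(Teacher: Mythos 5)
Your proof is correct and follows essentially the same route as the paper: both observe that $\pi_i$ is surjective so $\pi_i^*$ is injective, identify $\widehat{\MT}(A_1\times A_2)$ with the quotient of $\widehat{T}_{E_1}\times\widehat{T}_{E_2}$ by $\ker(N_1^*\phi_1^*+N_2^*\phi_2^*)$ using the surjectivity $T_L \twoheadrightarrow \MT(A_1\times A_2)$, and then read off surjectivity (resp.~finite cokernel) of $\pi_i^*$ from the inclusion of $p_i^*(\widehat{T}_{E_i})$ into that quotient.

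One point worth flagging: your argument shows that $\pi_1$ is an isogeny iff $\widehat{T}_{E_1}\times\{0\}+\ker(N_1^*\phi_1^*+N_2^*\phi_2^*)$ has finite index (and correspondingly $\{0\}\times\widehat{T}_{E_2}+\ker(\cdot)$ for $\pi_2$). The lemma as printed swaps the roles of $E_1$ and $E_2$ in the isogeny case relative to the isomorphism case, which is inconsistent with both your derivation and the paper's own paragraph preceding the lemma; the printed lemma statement appears to contain a typo, and your version is the correct one. Your closing phrase ``then yields the conditions in the statement'' should therefore be read modulo that correction.
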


We now rephrase this lemma in terms of matrices.
Write $G$ for the Galois group of $L$ over $\mathbb{Q}$, and let $f_i : G \to G_i$ be the canonical projections. The character group $\widehat{T}_L$ is canonically isomorphic to $\mathbb{Z}[G] \cong \mathbb{Z}^{|G|}$, while $\widehat{T}_{E_i} \cong \mathbb{Z}[H_i \backslash G_i] \cong \mathbb{Z}^{2g_i}$ and $\widehat{T}_{E_i^*} \cong \mathbb{Z}[H'_i \backslash G_i] \cong \mathbb{Z}^{|H'_i \backslash G|}$. %
In order to make Lemma \ref{lemma:MTIsoLinearAlgebra} explicit, it now suffices to write down a matrix representing $N_1^* \phi_1^* + N_2^*\phi_2^*$ with respect to the bases of $\widehat{T}_{E_1} \times \widehat{T}_{E_2}$ and $\widehat{T}_L$ induced by the groups $H_i \backslash G_i$ and $G$. Clearly it suffices to do so for $N_1^*\phi_1^*$ and $N_2^*\phi_2^*$ separately, and this is achieved by noticing that $\phi_i^*$ is described by Equation \eqref{eq:ReflexNorm}, while
\[
\begin{array}{cccc}
N_i^* : & \widehat{T}_{E_i^*} & \to & \widehat{T}_L \\
& [H_i'g] & \mapsto & \sum_{f_i(h) \in H_i'} [hg].
\end{array}
\]
This determines the matrices $M_1, M_2$ representing $N_1^*\phi_1^*, N_2^* \phi_2^*$ with respect to the bases above, thus allowing Lemma \ref{lemma:MTIsoLinearAlgebra} to be used algorithmically. 
In the important special case $E_1=E_2=E$ we may take $L=L_1=L_2$, $G=G_1=G_2$ and $H_1=H_2=:H$, and the composition $N_i^* \phi_i^*$ is then described by
\[
\begin{array}{cccc}
N_i^* \phi_i^* : & \widehat{T}_E & \to & \widehat{T}_L \\
& [Hg] & \mapsto & \sum_{r \in \widetilde{\Phi_i^*}} [rg].
\end{array}
\]
The corresponding matrix $M_i$ (with rows indexed by $G$ and columns indexed by $H \backslash G$) has coefficient in position $(g_1, Hg_2)$ given by
$
\begin{cases}
1, \text{ if } g_1 g_2^{-1} \in \widetilde{\Phi_i^*} \\
0, \text{ otherwise}
\end{cases}
$.
The matrix $M$ of $N_1^*\phi_1^* + N_2^*\phi_2^*$ has rows indexed by $G$ and columns indexed by $H\backslash G \times \{1,2\}$, and is obtained by concatenating the matrices of $N_1^*\phi_1^*$, $N_2^*\phi_2^*$ horizontally. 
Lemma \ref{lemma:MTIsoLinearAlgebra} then takes the following completely explicit form:
\begin{lemma}\label{lemma:MTIsoLinearAlgebraExplicit} Suppose $A_1, A_2$ are $g$-dimensional simple CM abelian varieties with complex multiplication by the same CM field $E$.
The projection $\MT(A_1 \times A_2) \to \MT(A_1)$ is an isomorphism if and only if the $\mathbb{Z}$-submodule of $\mathbb{Z}^{2g} \oplus \mathbb{Z}^{2g} \cong \mathbb{Z}[H \backslash G] \oplus \mathbb{Z}[H \backslash G]$ spanned by $\ker M$ and by $\mathbb{Z}^{2g} \oplus \{0\}$ is all of $\mathbb{Z}^{2g} \oplus \mathbb{Z}^{2g}$. Similarly, $\MT(A_1 \times A_2) \to \MT(A_1)$ is an isogeny if and only if $\ker M$ and $\mathbb{Z}^{2g} \oplus \{0\}$ span a finite-index submodule of $\mathbb{Z}^{2g} \oplus \mathbb{Z}^{2g}$.
\end{lemma}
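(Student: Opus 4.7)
The plan is to observe that Lemma~\ref{lemma:MTIsoLinearAlgebraExplicit} is a direct translation of Lemma~\ref{lemma:MTIsoLinearAlgebra} into matrix language, specialised to the case $E_1=E_2=E$ (so that $L_1=L_2=L$, $G_1=G_2=G$ and $H_1=H_2=H$). The whole construction of the matrices $M_1, M_2$ was set up precisely for this translation to be tautological, so the work amounts to verifying that $M$ represents the right linear map on the right bases.

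First I would fix the standard bases $\widehat{T}_L \cong \mathbb{Z}[G]$ and $\widehat{T}_E \cong \mathbb{Z}[H\backslash G]\cong \mathbb{Z}^{2g}$, and check that the matrix $M_i$ described in the excerpt represents $N_i^*\phi_i^* : \widehat{T}_E \to \widehat{T}_L$. Combining the explicit formula \eqref{eq:ReflexNorm} for $\phi_i^*$ with the formula $N_i^*([H'_ig])=\sum_{f_i(h)\in H'_i}[hg]$, one obtains (after the specialisation $L_i=L$, $H_i=H$) the single-step description $N_i^*\phi_i^*([Hg]) = \sum_{r\in\widetilde{\Phi_i^*}}[rg]$ already recorded in the text; reading off the coefficient of $[g_1]$ gives the entry $1_{g_1g_2^{-1}\in\widetilde{\Phi_i^*}}$ stated for $M_i$. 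By construction the concatenation $M=[M_1\mid M_2]$ then represents
\[
N_1^*\phi_1^* + N_2^*\phi_2^* \colon \widehat{T}_E \oplus \widehat{T}_E \longrightarrow \widehat{T}_L,
\quad (a,b)\mapsto N_1^*\phi_1^*(a)+N_2^*\phi_2^*(b),
\]
so that $\ker M \subseteq \mathbb{Z}^{2g}\oplus\mathbb{Z}^{2g}$ equals $\ker(N_1^*\phi_1^*+N_2^*\phi_2^*)$ under the chosen identifications.

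It then suffices to apply Lemma~\ref{lemma:MTIsoLinearAlgebra} with $E_1=E_2=E$. That lemma says $\pi_1:\MT(A_1\times A_2)\to\MT(A_1)$ is an isomorphism if and only if $\widehat{T}_E\times\{0\}+\ker(N_1^*\phi_1^*+N_2^*\phi_2^*)=\widehat{T}_E\times\widehat{T}_E$, which, in the bases above, is exactly the first assertion of Lemma~\ref{lemma:MTIsoLinearAlgebraExplicit}. The isogeny version is obtained verbatim from the second part of Lemma~\ref{lemma:MTIsoLinearAlgebra}, replacing equality of submodules by the condition that the sum have finite index. There is no real obstacle; the only care required is bookkeeping the identification of $\widehat{T}_E$ with $\mathbb{Z}[H\backslash G]$ consistently on both summands when translating the conclusion.
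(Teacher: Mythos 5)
Your proposal is correct and matches the paper's own (largely implicit) argument: the paper presents Lemma~\ref{lemma:MTIsoLinearAlgebraExplicit} as an immediate reformulation of Lemma~\ref{lemma:MTIsoLinearAlgebra} once the matrix of $N_1^*\phi_1^* + N_2^*\phi_2^*$ has been written down, and your verification of the matrix entries and the base identifications is exactly what this requires. One small caveat: you say the isogeny version is obtained ``verbatim from the second part of Lemma~\ref{lemma:MTIsoLinearAlgebra}'', but as printed that second part appears to have the roles of $\pi_1$ and $\pi_2$ swapped (it pairs $\pi_1$ with $\{0\}\times\widehat{T}_{E_2}$), so a literal reading would give the wrong factor; the correct translation, which your argument in fact uses, is to take the isomorphism characterisation for $\pi_1$ (namely $\widehat{T}_{E_1}\times\{0\}+\ker(\cdot)$) and replace equality by finite index, which yields $\mathbb{Z}^{2g}\oplus\{0\}$ as in Lemma~\ref{lemma:MTIsoLinearAlgebraExplicit}.
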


As an application of this circle of ideas we show the following statement concerning CM abelian threefolds:
\begin{proposition}\label{prop:CMThreefolds}
Let $A_1, A_2$ be simple CM abelian threefolds over $\mathbb{C}$ for which $\dim \MT(A_1 \times A_2) = \dim \MT(A_1) = \dim \MT(A_2)$. Then $A_1$ and $A_2$ are isogenous.
\end{proposition}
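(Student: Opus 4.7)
The plan is to combine the dimension hypothesis with the description of the Mumford--Tate group of a product of CM abelian varieties from Section \ref{sect:CMMT}. Since $A_1, A_2$ are geometrically simple CM abelian threefolds, each has CM by a pair $(E_i, \Phi_i)$ with $[E_i:\mathbb{Q}] = 6$ and $\Phi_i$ a primitive CM type. A classical fact asserts that every primitive CM type on a CM field of degree at most $6$ is non-degenerate, so the reflex norm $\phi_i : T_{E_i^*} \to T_{E_i}$ has image of dimension $g+1 = 4$, giving $\dim \MT(A_i) = 4$. The hypothesis then forces the canonical surjections $\pi_i : \MT(A_1 \times A_2) \to \MT(A_i)$ to have finite kernel; in particular, $\MT(A_1 \times A_2)$ is, up to isogeny, the graph of an isogeny $\MT(A_1) \to \MT(A_2)$ inside $T_{E_1} \times T_{E_2}$.

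I would then reformulate the isogeny condition via Lemma \ref{lemma:MTIsoLinearAlgebra}: picking a common Galois closure $L$ of $L_1, L_2$, both $\widehat{T}_{E_1} \times \{0\}$ and $\{0\} \times \widehat{T}_{E_2}$, together with $\ker(N_1^*\phi_1^* + N_2^*\phi_2^*)$, must span finite-index subgroups of $\widehat{T}_{E_1} \times \widehat{T}_{E_2}$. This is attacked by case analysis on the Galois group of the Galois closure of $E_i$: since the totally real cubic subfield of $E_i$ is either cyclic Galois or non-Galois, one has either $G_i \cong C_6$ (with $L_i = E_i$) or $G_i \cong C_2 \times S_3$ (with $[L_i:E_i] = 2$). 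For each isomorphism type of $E_i$ the primitive CM types up to essential equality form a short, explicit list, which I would enumerate directly.

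When $E_1 \cong E_2 =: E$, the criterion of Lemma \ref{lemma:MTIsoLinearAlgebraExplicit} directly forces $\Phi_1$ and $\Phi_2$ to be essentially equal, so $A_1$ and $A_2$ are isogenous by Remark \ref{rmk:NotIsogenous}. The main obstacle is the mixed case $E_1 \not\cong E_2$: one must rule out the possibility that two non-isomorphic degree-$6$ CM fields give rise, via Lemma \ref{lemma:MapToMTOfProduct}, to a $4$-dimensional subtorus of $T_{E_1} \times T_{E_2}$ that is a graph of an isogeny. The strategy is to track the Galois action on $\widehat{T}_{E_1} \oplus \widehat{T}_{E_2}$ via the matrices of $N_1^*\phi_1^*$ and $N_2^*\phi_2^*$, and to show that the finite-index condition cannot be satisfied unless the kernel is ``balanced'' across both factors, forcing the Galois action---and hence $E_1$ and $E_2$---to coincide. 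Since the enumeration of primitive CM types is finite, the argument reduces to a manageable but somewhat tedious matrix computation.
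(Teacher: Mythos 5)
Your overall strategy --- recast the hypothesis via Lemma \ref{lemma:MTIsoLinearAlgebra}/\ref{lemma:MTIsoLinearAlgebraExplicit} and reduce to a finite enumeration over Galois groups, subgroups, and primitive CM types --- is the same as the paper's. However, there are two genuine gaps.

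First, your classification of the Galois group $G_i = \operatorname{Gal}(L_i/\mathbb{Q})$ is incomplete and the reasoning behind it is incorrect. You infer from ``the totally real cubic subfield is either cyclic Galois or not'' that $G_i \cong C_6$ (with $L_i = E_i$) or $G_i \cong C_2 \times S_3$. But even when $E_{i,0}$ is a cyclic cubic, the sextic CM field $E_i = E_{i,0}(\sqrt{-\alpha})$ need not be Galois over $\mathbb{Q}$: the conjugates of $\sqrt{-\alpha}$ under $\operatorname{Gal}(E_{i,0}/\mathbb{Q})$ need not lie in $E_i$, so $L_i$ can be as large as degree $24$. Indeed the correct list of groups (transitive subgroups of $S_6$ of order divisible by $6$ with a central involution, serving as complex conjugation) has four members: $\mathbb{Z}/6\mathbb{Z}$, $D_6$, $\mathbb{Z}/2\mathbb{Z}\times A_4$ and $\mathbb{Z}/2\mathbb{Z}\times S_4$. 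Your enumeration silently omits the last two, so even if the $C_6$ and $D_6$ computations check out, the proof would not be complete.

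Second, your treatment of the ``mixed'' case $E_1\not\cong E_2$ is a plan, not a proof: ``tracking the Galois action \ldots forcing $E_1$ and $E_2$ to coincide'' is the desired conclusion, not an argument. The paper bridges this gap with a specific observation you should isolate: since the core of $H_i$ in $G_i$ is trivial, the kernel of the $\abGal{\mathbb{Q}}$-action on the rational character group $\widehat{\MT}(A_i)\otimes\mathbb{Q} \cong \mathbb{Q}[H_i\backslash G_i]$ cuts out exactly the Galois closure $L_i$. The hypothesis makes $\pi_i$ isogenies, hence all three rational character groups isomorphic as Galois modules, hence $L_1 = L_2 =: L$. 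This collapses the two fields under a single Galois group $G = \operatorname{Gal}(L/\mathbb{Q})$, at which point the enumeration (over pairs of index-$6$ subgroups of $G$ not containing $\rho$, and pairs of CM types) becomes finite and a direct computation shows $H_1, H_2$ are conjugate and the CM types are essentially equal. Without this reduction, the space you are trying to enumerate over in the mixed case is not obviously finite or manageable.

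With those two repairs --- the full list of four Galois groups, and the ``common Galois closure'' argument to dispose of $E_1\not\cong E_2$ --- your approach matches the paper's and would go through.
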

\begin{proof}
The Mumford-Tate group of a simple abelian threefold with CM by the field $E$ is equal to the $\mathbb{Q}$-torus given as a functor by
\[
U_E(R) = \{ x \in T_E(R) : xx^* \in R^\times  \}
\]
for all $\mathbb{Q}$-algebras $R$, see \cite[§2.3]{MZ}. Let $L$ be the Galois closure of $E/\mathbb{Q}$, let $G = \operatorname{Gal}(L/\mathbb{Q})$ and write $H$ for the subgroup of $G$ corresponding to $E$. The group $G$ has nontrivial centre of even order (complex conjugation is a central element of order 2) and is isomorphic to a subgroup of $S_6$ of order divisible by 6 (since it is the Galois group of the normal closure of the field $E$, of degree 6). Up to isomorphism, there are precisely 4 such groups: $\mathbb{Z}/6\mathbb{Z}, D_6, \mathbb{Z}/2\mathbb{Z} \times A_4$  and $\mathbb{Z}/2\mathbb{Z} \times S_4$. Notice that each of these groups has a single central involution $\rho$, so complex conjugation is uniquely identified by the group structure.

The rational character group $\hat{U}_E \otimes_\mathbb{Z} \mathbb{Q}$ can be identified with $\mathbb{Q}[H \setminus G]$, and in particular is equipped with the natural action of $\abGal{\mathbb{Q}}$, which is nothing but the composition of the natural projection $\abGal{\mathbb{Q}} \to G$ with the
permutation action of $G$ on the coset space $H \setminus G$. Since $L$ is the Galois closure of $E$, the core of $H$ in $G$ is trivial, hence the kernel of the permutation action is trivial. Translating this group-theoretic statement to Galois theory, we obtain that the kernel of the Galois action on $\hat{U}_E \otimes_\mathbb{Z} \mathbb{Q}$ defines the extension $L/\mathbb{Q}$, hence that the Mumford-Tate group determines $L$.

Let now $E_1, E_2$ be the fields of complex multiplication of $A_1, A_2$ respectively. The hypothesis implies that the projections $\MT(A_1 \times A_2) \to \MT(A_i)$ are isogenies, so the rational character groups
\[
\widehat{\MT}(A_1) \otimes \mathbb{Q}, \quad \widehat{\MT}(A_2) \otimes \mathbb{Q}, \quad \widehat{\MT}(A_1 \times A_2) \otimes \mathbb{Q}
\]
are all isomorphic (also as Galois modules). %
In particular, $E_1$ and $E_2$ have the same Galois closure $L/\mathbb{Q}$.
The group $G = \operatorname{Gal}(L/\mathbb{Q})$ is isomorphic to one of the finitely many possibilities listed above. For each such group, there are only finitely many possibilities for %
the subgroups $H_1, H_2$ (not containing $\rho$) corresponding to the sub-extensions $L/E_1$, $L/E_2$, and for each $H_i$ there are only finitely many possible CM types for $H_i \setminus G$. We wrote a short script \cite{Script} that loops over all $G$ in the above list, %
all subgroups $H_1, H_2$ not containing $\rho$ and satisfying $[G:H_1] = [G:H_2] = 6$, and all pairs $(\Phi_1, \Phi_2)$ of CM types of $H_1 \backslash G$, $H_2 \backslash G$. For each such possibility we have then determined the dimensions of $\MT(A_1), \MT(A_2)$ and $\MT(A_1 \times A_2)$ (this uses the identification of $\widehat{\MT}(A_1 \times A_2)$ with $\displaystyle \frac{\widehat{T}_{E_1} \times \widehat{T}_{E_2}}{ \ker (N_1^* \phi_1^* + N_2^*\phi_2^*) }$ proved above). In all cases we have thus been able to check that the equality $\dim \MT(A_1 \times A_2) = \dim \MT(A_1) = \dim \MT(A_2)$ implies that $H_1, H_2$ are conjugated in $G$ (hence that the fields $E_1, E_2$ are isomorphic), and that under the identification $E_1 \cong E_2$ induced by conjugation the CM types $\Phi_1, \Phi_2$ are essentially equal. We conclude as desired that $A_1, A_2$ are isogenous.
\end{proof}

\begin{remark}
In the context of the previous proof, we know a priori that $\dim \MT(A_i)=4$ by \cite[Examples 3.7]{MR608640}. Our computations confirm this independently.
\end{remark}

\section{Examples of strongly iso-Kummerian pairs}\label{sect_Counterexample}

\subsection{CM abelian varieties}\label{sect:CMCounterexamples}

Fix a positive integer $g$ not equal to $1, 2, 3, 4, 6$. We describe the construction of infinitely many pairs of abelian varieties $(A_1, A_2)$ with $\dim A_1=\dim A_2=g$ that satisfy \eqref{eq:Question}, are absolutely simple, and geometrically non-isogenous. 
It is easy to see that there exist infinitely many non-isomorphic totally real number fields $E_0$, Galois over $\mathbb{Q}$, with $\operatorname{Gal}(E_0/\mathbb{Q}) \cong \mathbb{Z}/g\mathbb{Z}$ (simply consider appropriate real subfields of cyclotomic fields). %
We further fix a quadratic imaginary field $F$ and let $E=E_0F$ be a CM field. %
We will tacitly identify $\operatorname{Hom}_{\text{fields}}(E,\mathbb{C})$ with $\operatorname{Gal}(E/\mathbb{Q})$. Notice that this group can be canonically identified with the product $\operatorname{Gal}(E_0/\mathbb{Q}) \times \operatorname{Gal}(F/\mathbb{Q})$, and that the generator $\iota$ of the direct factor $\operatorname{Gal}(F/\mathbb{Q})$ is complex conjugation on $E$. We also fix a generator $\psi$ of the cyclic group $\operatorname{Gal}(E/F) \cong \operatorname{Gal}(E_0/\mathbb{Q})$ and write the $2g$ elements of $\operatorname{Gal}(E/\mathbb{Q})=\operatorname{Hom}_{\text{fields}}(E,\mathbb{C})$ as $\sigma_0,\ldots,\sigma_{g-1}, \overline{\sigma_0},\ldots,\overline{\sigma_{g-1}}$, where $\sigma_i = \psi^i$ and $\overline{\sigma_i} = \iota \circ \psi^i$. It will be useful to use the notation $\sigma_i$ also for $i \geq g$, interpreting indices modulo $g$. Fix two integers $r, h$ smaller than $g$ (to be chosen below; we will only consider $r=h$, but other choices would also work).
We consider the two CM types $\Phi_1, \Phi_2$ defined by the conditions
\[
\Phi_1^* = \{ \overline{\sigma_i} : i =0,\ldots,r-1 \} \cup \{ \sigma_i : i = r,\ldots,g-1 \}
\]
and
\[
\Phi_2^* = \{\overline{\sigma_{hi}} : i=0,\ldots,r-1 \} \cup \{\sigma_{hi} : i =r,\ldots,g-1 \}.
\]
Notice that $\Phi \mapsto \Phi^*$ is an involution on the set of CM-types, so specifying $\Phi^*$ is equivalent to describing $\Phi$ itself.
One can check easily that choosing $r=h$ to be the smallest prime that does not divide $g$ yields CM types $\Phi_1, \Phi_2$ that are primitive and essentially different. Also notice that for these CM types the reflex field $E^*$ coincides with $E$.

By Theorem \ref{thm:CMExistence} there exist abelian varieties $A_1, A_2$, defined over some common number field $K$, such that $\operatorname{End}_{\overline{K}}(A_i) \otimes \mathbb{Q}=E$ and the CM type of $A_i$ is (essentially equal to) $\Phi_i$. By enlarging $K$ and replacing $A_i$ with some isogenous abelian variety if necessary, we can also assume that $\operatorname{End}_K(A_i)=\mathcal{O}_E$ (see \cite[Proposition 2.5.4]{CurvesOfGenus2}) and that $A_1, A_2$ have good reduction at all places of $K$ (Corollary \ref{cor:GoodReduction}). Since $\operatorname{End}_{\overline{K}}(A_i) \otimes \mathbb{Q} \cong E$ is a field, both $A_1$ and $A_2$ are geometrically simple, and since their CM types are essentially different we have $\operatorname{Hom}_{\overline{K}}(A_1, A_2)=(0)$ (Remark \ref{rmk:NotIsogenous}).
We now prove that condition \eqref{eq:Question} holds for our abelian varieties $A_1, A_2$ over a finite extension $K'$ of $K$. Notice that for each dimension $g$ we have constructed infinitely many geometrically non-isomorphic abelian varieties: indeed, their geometric endomorphism algebras fall into infinitely many distinct isomorphism classes. Since as already pointed out it suffices to check \eqref{eq:Question} in case $d$ is the power of a prime, the following result then proves Theorem \ref{thm:MainIntro} for $g \neq 4, 6$.

\begin{theorem}\label{thm:MainBody}
There is a finite extension $K'$ of $K$ with the following property: for every prime $\ell$ and every $n \in \mathbb{N} \cup \{\infty\}$ we have $K'(A_1[\ell^n]) = K'(A_2[\ell^n])$.
\end{theorem}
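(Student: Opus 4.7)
The plan is to use the class-field-theoretic description of CM Galois representations in Theorem \ref{thm:CM} and to reduce the comparison of torsion fields to an explicit algebraic identity involving the reflex norms $\phi_1, \phi_2$.

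I would first enlarge $K$ to a finite extension $K'$ satisfying: (i) $\operatorname{End}_{K'}(A_i) = \mathcal{O}_E$ for $i = 1, 2$; (ii) both $A_i$ have good reduction at every finite place of $K'$, by Corollary \ref{cor:GoodReduction}; (iii) $K' \supseteq E = E^*$; and (iv) the characters $\varepsilon_1, \varepsilon_2$ of Theorem \ref{thm:CM} become trivial after pullback to $I_{K'}$. Condition (iv) is achievable because each $\varepsilon_i$ has finite image in $\mu(E)$, is trivial on $U_v$ for finite $v$ by (ii) and on $K'_v$ for $v \mid \infty$ by Remark \ref{rmk:EpsilonTrivialAtInfinity}; hence $\varepsilon_i$ factors through a finite abelian quotient of $\operatorname{Gal}(\overline{K'}/K')^{\mathrm{ab}}$ that can be trivialised by a further finite extension. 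Under these hypotheses Theorem \ref{thm:CM} reduces to $\rho_{A_i, \ell^\infty}(\sigma) = \phi_i(N_{K'/E}(a_\ell))^{-1}$ for $\sigma \leftrightarrow [a] \in I_{K'}/K'^\times$, where $\phi_i = N_{\Phi_i^*} : T_E \to T_E$.

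Next I would apply Lemma \ref{lemma:MTIsoLinearAlgebraExplicit} to the pair $(\Phi_1^*, \Phi_2^*)$ to show that both canonical projections $\MT(A_1 \times A_2) \to \MT(A_i)$ are isomorphisms of tori over $\mathbb{Q}$. The combinatorial input is the identity
\[
P_1 - P_2 = (\iota - 1)\,T \quad \text{in } \mathbb{Z}[G],
\]
where $G = \operatorname{Gal}(E/\mathbb{Q}) = \langle \psi \rangle \times \langle \iota \rangle$, $P_i = \sum_{\tau \in \Phi_i^*} \tau$, and $T = \sum_{j \in S_1}\psi^j - \sum_{j \in S_2}\psi^j$ with $S_1 = \{0, 1, \ldots, r-1\}$ and $S_2 = \{0, h, \ldots, (r-1)h\}$; combined with $|S_1| = |S_2| = r$ and $\gcd(h, g) = 1$, this reduces the span condition of Lemma \ref{lemma:MTIsoLinearAlgebraExplicit} to an explicit check that $(\iota - 1)T$ lies in the ideal $P_1\,\mathbb{Z}[G] \cap P_2\,\mathbb{Z}[G]$, a finite computation in the group ring.

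Finally I would deduce the equality of torsion fields at every finite level. The $\mathbb{Q}$-isomorphism $\MT(A_1 \times A_2) \to \MT(A_i)$ just obtained is the restriction of the integrally defined projection $T_E \times T_E \to T_E$, and so at all but finitely many primes $\ell$ it is automatically an isomorphism of $\mathbb{Z}_\ell$-models; at the remaining primes a further finite extension of $K'$ confines the image of Galois to a subgroup where integrality holds. Together with the openness of $G_{A_1 \times A_2, \ell^\infty}$ inside $\MT(A_1 \times A_2)(\mathbb{Q}_\ell)$, secured by enlarging $K'$ once more so that all $\ell$-adic monodromies are connected (as in the proof of Proposition \ref{prop:MTIsomorphic}), this yields isomorphisms $G_{A_1 \times A_2, \ell^\infty} \xrightarrow{\sim} G_{A_i, \ell^\infty}$ that commute with reduction modulo $\ell^n$, so $\ker \rho_{A_1, \ell^n} = \ker \rho_{A_2, \ell^n}$ in $\operatorname{Gal}(\overline{K'}/K')$ and therefore $K'(A_1[\ell^n]) = K'(A_2[\ell^n])$ for every $\ell$ and every $n \in \mathbb{N} \cup \{\infty\}$. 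The main obstacle is the verification of the combinatorial criterion of Lemma \ref{lemma:MTIsoLinearAlgebraExplicit} for the specific CM types constructed, which is made tractable by the factorisation $P_1 - P_2 = (\iota - 1)T$; a secondary but manageable point is the handling of the finitely many primes where the $\mathbb{Z}_\ell$-integral models of the tori need not match, done via additional finite base change.
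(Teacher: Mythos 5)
Your proposal takes a genuinely different route from the paper's. The paper works entirely on the idèle side: after trivialising the characters $\varepsilon_i$ over a finite extension $K'$ exactly as you do, it directly manipulates the explicit formula $\rho_i(a) = \prod_{\phi\in\Phi_i^*}\phi(N_{K/E}(a_\ell))^{-1}$, introducing $\beta_j = \overline{\sigma_j(b_\ell)}/\sigma_j(b_\ell)$ and using $\psi(\beta_j)=\beta_{j+1}$ together with $\gcd(hr,g)=1$ to show that for both $i=1$ and $i=2$ the condition $\rho_i(a)\equiv 1\pmod{\ell^n}$ is equivalent to the same pair of conditions ($\psi(\beta_0)=\beta_0$ and $\beta_0^r\equiv N_{K/F}(a_\ell)^{-1}\pmod{\ell^n}$). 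You instead detour through Lemma~\ref{lemma:MTIsoLinearAlgebraExplicit} and try to prove that both projections $\MT(A_1\times A_2)\to\MT(A_i)$ are $\mathbb{Q}$-isomorphisms before transferring back to torsion fields. Both routes can in principle be made to work, but yours requires more of the machinery built in Sections~\ref{sect:PartialClassification}--\ref{sect:CMMT}, while the paper's is self-contained.

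The decisive gap in your argument is the verification of the group-ring containment. Your identity $P_1-P_2=(\iota-1)T$ (with $T=\sum_{i=0}^{r-1}(\psi^i-\psi^{hi})$) is correct and reduces the isomorphism criterion to showing $(\iota-1)T\in P_1\mathbb{Z}[G]\cap P_2\mathbb{Z}[G]$, equivalently $P_1\mathbb{Z}[G]=P_2\mathbb{Z}[G]$. You dismiss this as ``a finite computation in the group ring,'' but the theorem must hold for every $g\not\in\{1,2,3,4,6\}$, so this is not a single finite check: either one gives a uniform argument or one has an infinite family of verifications. Moreover the statement is not obvious: if $\theta$ is the ring automorphism of $\mathbb{Z}[G]$ induced by $\psi\mapsto\psi^h$, then $P_2=\theta(P_1)$, so what is being asserted is that the ideal $P_1\mathbb{Z}[G]$ is $\theta$-stable. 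Proving this is exactly the content of the $\beta_j$-manipulation in the paper, which exploits the cyclic structure of $\operatorname{Gal}(E/F)$, the fact that $N_{K/F}(a_\ell)$ is $\psi$-fixed, and the coprimality $\gcd(hr,g)=1$. In other words, you have correctly isolated the crux and then skipped it.

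A secondary point: your treatment of ``bad primes'' and $\mathbb{Z}_\ell$-models is not quite right. Once the $\mathbb{Z}$-lattice identity $\widehat{T}_E\times\{0\}+\ker M=\widehat{T}_E\times\widehat{T}_E$ holds, the passage to mod-$\ell^n$ kernel equality works uniformly at every prime and needs no further base change: for $(x,y)$ in the Galois image inside $(\mathcal{O}_E\otimes\mathbb{Z}_\ell)^\times\times(\mathcal{O}_E\otimes\mathbb{Z}_\ell)^\times$, the lattice identity says every character of $T_E$ applied to $y$ equals a character of $T_E$ applied to $x$, and such characters, being given by $\mathbb{Z}$-linear combinations of the Galois automorphisms of $E$, preserve the congruence $\equiv 1\pmod{\ell^n\mathcal{O}_E\otimes\mathbb{Z}_\ell}$. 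Also, the extension that makes the $\ell$-adic algebraic monodromy groups connected is needed so that the Galois image lands in $\MT(\cdot)(\mathbb{Q}_\ell)$; it is not what ``secures openness'' (openness for CM abelian varieties is part of the main theorem of complex multiplication).
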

\begin{proof}
Denote by $\rho_1$, $\rho_2$ the $\ell$-adic Galois representations attached to $A_1, A_2$. By Theorem \ref{thm:CM} we may consider $\rho_i$ as a map $I_K \to (\mathcal{O}_E \otimes \mathbb{Z}_\ell)^\times$, and we get corresponding homomorphisms $\varepsilon_1, \varepsilon_2 : I_K \to \mathcal{O}_E^\times$. The open subgroup $K^\times  \cdot \left( \prod_{v \text{ finite}} U_v \times \prod_{v \text{ infinite}} K_v^\times \right)$ of $I_K$ defines a finite extension $K'$ of $K$. We restrict $\rho_1, \rho_2$ to $K^\times  \cdot \left( \prod_{v \text{ finite}} U_v \times \prod_{v \text{ infinite}} K_v^\times \right)$ or, which is the same, to $\operatorname{Gal}(\overline{K'}/K')$. 
The representation $\rho_i$ is trivial on $K^\times$ (because it factors via $\operatorname{Gal}(\overline{K}/K)^{\operatorname{ab}}$) and on $ \prod_{v \text{ infinite}} K_v^\times$, because for an idèle $a=(a_v)$ that is nontrivial only along the infinite components we have both $a_\ell=1$ and $\varepsilon(a)=1$,
 see Theorem \ref{thm:CM} and Remark \ref{rmk:EpsilonTrivialAtInfinity}. It follows that the representation $\rho_i|_{\operatorname{Gal}(\overline{K'}/K')}$ is completely determined by the restriction of $\rho_i$ to the subgroup $H:=\prod_{v \text{ finite}} U_v$.

Since the homomorphisms $\varepsilon_i$ are trivial on $U_v$ for each place $v$ of good reduction, and since by construction $A_i$ has good reduction at all finite places of $K$, by Theorem \ref{thm:CM} we see that for $a=(a_v) \in H$ we have
 \[
 \rho_1(a) = \prod_{\phi \in \Phi_1^*} \phi(N_{K/E}(a_\ell))^{-1} \quad \text{and} \quad  \rho_2(a) = \prod_{\phi \in \Phi_2^*} \phi(N_{K/E}(a_\ell))^{-1}.
 \]
Let $b_\ell=N_{K/E}(a_\ell)$ and for each integer $i$ let $\beta_i = \overline{\sigma_i(b_\ell)}/\sigma_i(b_\ell)$. We have $\psi \circ \sigma_i = \sigma_{i+1}$, and since $\iota$ commutes with $\psi$ we also have $\psi \circ \overline{\sigma_i} = \psi \circ \iota \circ \psi^i = \iota \circ \psi^{i+1} = \overline{\sigma_{i+1}}$. In particular, $\psi(\beta_i)=\beta_{i+1}$. Then
 \[
 \begin{aligned}
 \rho_2(a)^{-1} & = \prod_{\phi \in \Phi_2^*} \phi(b_\ell) =  \prod_{i=0}^{r-1} \beta_{hi} \cdot \prod_{i=0}^{g-1} \sigma_i(b_\ell) \\
 & = \prod_{i=0}^{r-1} \beta_{hi} \cdot \prod_{\sigma \in \operatorname{Gal}(E/F)} \sigma(b_\ell) = \prod_{i=0}^{r-1} \beta_{ri}  \cdot N_{E/F} \left( N_{K/E}(a_\ell) \right)\\
 & = \prod_{i=0}^{r-1} \beta_{hi} \cdot N_{K/F}(a_\ell).
 \end{aligned}
 \]
We remark that for simplicity of notation we are now writing the action of $\operatorname{Gal}(E/\mathbb{Q})$ on the left. This does not conflict with our previous conventions since $\operatorname{Gal}(E/\mathbb{Q})$ is abelian. 

Consider $\ker \rho_2|_H$: it consists of the idèles $a=(a_\ell) \in H$ such that $\prod_{i=0}^{r-1} \beta_{hi}  = N_{K/F}(a_\ell)^{-1}$. However, since $N_{K/F}(a_\ell)^{-1} \in F \otimes \mathbb{Q}_\ell$ is invariant under $\psi \in \operatorname{Gal}(E/F)$, applying $\psi^{h}$ to the previous equality we find 
\[
(a_\ell) \in \ker \rho_2|_H \Longleftrightarrow \prod_{i=0}^{r-1} \beta_{hi}  = N_{K/F}(a_\ell)^{-1} \Longleftrightarrow \prod_{i=0}^{r-1} \beta_{hi} =\prod_{i=1}^{r} \beta_{hi} =N_{K/F}(a_\ell)^{-1}.
\]
As $\beta_i$ is invertible for all $i$ (being the norm of an idèle), the last equivalent condition above implies $\beta_{hr}=\beta_0$. By repeatedly applying $\psi^{hr}$ we then obtain $\beta_{hrk} = \beta_0$ for all integers $k$. As $hr$ is assumed to be prime with $g$, this implies that all $\beta_i$ are equal, and since $\beta_i = \psi^i(\beta_0)$ this condition is also equivalent to $\psi(\beta_0)=\beta_0$.
We have thus proved that an idèle $(a_\ell) \in H$ is in $\ker \rho_2|_H$ if and only if the two conditions
 $\psi(\beta_0)=\beta_0$ and
$\beta_0^r = N_{K/F}(a_\ell)^{-1}$ hold. The same exact argument applies to $\rho_1|_H$ (it is simply the case $h=1$ of the above), which shows that $\rho_1|_H$ and $\rho_2|_H$ have the same kernel. 
We have then showed that the kernels of the $\ell$-adic representations attached to $A_1, A_2$ over $K'$ are the same, hence that $K'(A_1[\ell^\infty]) = K'(A_2[\ell^\infty])$.
Finally, the same argument works verbatim for modulo-$\ell^n$ representations (simply replace equalities with congruences modulo $\ell^n$), so the equality $K'(A_1[\ell^n]) = K'(A_2[\ell^n])$ holds for all primes $\ell$ and all positive integers $n$.
\end{proof}

\subsubsection{Small dimensions}

We now analyse the remaining cases $g=1, 2, 3, 4, 6$. Theorem \ref{thm:IntroClassification} completely settles the case when $A_1, A_2$ have dimension at most 2 (independently of whether they have complex multiplication or not). %
Examples for $g=4, 6$ (the cases missing to complete the proof of Theorem \ref{thm:MainIntro}) can be obtained as follows. Let $E/\mathbb{Q}$ be a CM field having Galois group isomorphic to $\mathbb{Z}/2g\mathbb{Z}$ (with generator $\sigma$), and consider the CM types
\[
\Phi_1 = \{ \sigma^2, \sigma^3, \sigma^4, \sigma^5 \}
, \quad 
\Phi_2 = \{ \sigma^0, \sigma^2, \sigma^5, \sigma^7 \} \quad \text{ if } g=4
\]
\[
\Phi_1 = \{ \sigma^0, \sigma, \sigma^2, \sigma^9, \sigma^{10}, \sigma^{11} \}
, \quad 
\Phi_2 = \{
\sigma^0, \sigma^2, \sigma^5 , \sigma^7, \sigma^9,  \sigma^{10}
\} \quad \text{ if } g=6.
\]
Let moreover $A_1, A_2$ be CM abelian varieties (defined over a suitable number field) with CM by $E$ and by the CM types $\Phi_1, \Phi_2$ respectively.
\begin{remark}
The above CM types have been found by looping through all possible CM types on $\mathbb{Z}/2g\mathbb{Z}$ and using the algorithm of Lemma \ref{lemma:MTIsoLinearAlgebraExplicit} to check when the projections $\MT(A_1 \times A_2) \to \MT(A_i)$ are isomorphisms, see \cite{Script}.
\end{remark}

We proceed as in the proof of Theorem \ref{thm:MainBody} to define the field $K'$. One can then check that $A_1, A_2$ are strongly iso-Kummerian over $K'$. For example, for $g=6$ (which is the slightly harder case), Theorem \ref{thm:CM} shows that $(a_v) \in \prod_{v \text{ finite}} U_v$ is in the kernel of the modulo-$\ell^n$ representation $\rho_i$ attached to $A_i$ if and only if $e_i \equiv 1 \pmod{\ell^n}$, where
\[
e_1 := b_\ell  \sigma^{11}(b_\ell)  \sigma^{10}(b_\ell)  \sigma^3(b_\ell)  \sigma^2(b_\ell) \sigma(b_\ell), \text{ resp. } e_2 := b_\ell \sigma^{10}(b_\ell) \sigma^7(b_\ell) \sigma^5(b_\ell) \sigma^3(b_\ell) \sigma^2(b_\ell)
\]
and $b_\ell=N_{K/E}(a_\ell)^{-1}$. Straightforward linear algebra leads to the identities $e_1 = \frac{\sigma^8(e_2)\sigma^9(e_2)\sigma^{10}(e_2)}{ \sigma^5(e_2) \sigma^6(e_2)}$ and $e_2=\frac{\sigma^5(e_1)\sigma^7(e_1)\sigma^9(e_1)\sigma^{11}(e_1)}{\sigma^6(e_1)\sigma^8(e_1)\sigma^{10}(e_1)}$, which show that $e_1$ is congruent to $1$ if and only if $e_2$ is.
As in the proof of Theorem \ref{thm:MainBody}, this implies that $A_1, A_2$ are strongly iso-Kummerian over $K'$. Computations for $g=4$ are analogous but simpler, and we omit them.
Finally, since one can easily check that $\Phi_1, \Phi_2$ are simple and essentially different, this gives examples of 4- and 6-dimensional absolutely simple abelian varieties that are strongly iso-Kummerian but geometrically non-isogenous. Notice that in this way we may construct infinitely many geometrically distinct examples: indeed, one can find CM fields with cyclic Galois group of order $2g$ by considering the unique subfield of $\mathbb{Q}(\zeta_p)$ of degree $2g$ for the (infinitely many) primes $p$ such that $p-1 \equiv 2g \pmod{4g}$.

It remains to consider the case $g=3$. %
Recall that an absolutely simple abelian threefold $A$ is of type IV in Albert's classification if $\operatorname{End}_{\overline{K}}(A) \otimes \mathbb{Q}$ is either a quadratic imaginary field or a sextic CM field (in the latter case $A$ is a CM abelian variety).

\begin{proposition}\label{prop:g3}
Let $K$ be a number field. If $A_1, A_2/K$ are absolutely simple, strongly iso-Kummerian abelian threefolds, then they are geometrically isogenous. %
\end{proposition}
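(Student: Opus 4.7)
The plan is to combine the Albert classification of simple abelian threefolds with the results already established. In dimension $3$, Albert types II and III are excluded (they would require $4 \mid 2\dim A$), so each $A_i$ is of one of the following types: (a) type I with $\operatorname{End}^0_{\overline{K}}(A_i) = \mathbb{Q}$; (b) type IV with $\operatorname{End}^0_{\overline{K}}(A_i) = F_i$ a quadratic imaginary field; or (c) type IV with CM by a sextic CM field. Since the Mumford-Tate conjecture is known for abelian threefolds (by work of Tankeev, Ribet and others), Lemma \ref{lemma:IsoKummerianImplesSameHodgeDimension} yields $\dim \Hg(A_1) = \dim \Hg(A_2) = \dim \Hg(A_1 \times A_2)$. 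As the respective Hodge-group dimensions in (a), (b), (c) are $21$ (a form of $\operatorname{Sp}_6$), $8$ (a form of $\operatorname{SU}(2,1)$), and at most $3$ (a subtorus of $T_E$), the dimension condition forces $A_1, A_2$ to be of the same Albert type.

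Case (a) is handled by Theorem \ref{thm:IntroClassification}(2), since the relative dimension equals $3$ and is therefore odd. Case (c) is exactly the content of Proposition \ref{prop:CMThreefolds}, whose hypothesis $\dim \MT(A_1 \times A_2) = \dim \MT(A_i)$ is implied by the Hodge-dimension equality above via $\dim \MT = \dim \Hg + 1$. The main obstacle is case (b).

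In case (b), I would use that $\Hg(A_i)$ is absolutely simple, so the subgroup $\Hg(A_1 \times A_2) \subseteq \Hg(A_1) \times \Hg(A_2)$, being of full dimension $8$ and surjecting onto each factor, must be -- up to its identity component -- the graph of an isomorphism $\sigma : \Hg(A_1) \to \Hg(A_2)$ by a Goursat-type argument. Since $F_i = \operatorname{End}_{\Hg(A_i)}(V(A_i))$ is recovered from the action of $\Hg(A_i)$ on $V(A_i)$, the isomorphism $\sigma$ induces an identification $F_1 \cong F_2$. Moreover, the Hodge cocharacter $\mu_\infty$ of $A_1 \times A_2$ factors through the graph of $\sigma$ and projects onto the Hodge cocharacters of $A_1, A_2$, so $\sigma$ matches $\mu_{A_1}$ with $\mu_{A_2}$. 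Together these show that $V(A_1)$ and $V(A_2)$ are isomorphic as polarisable rational Hodge structures with compatible $F$-action, which by the Riemann correspondence yields an isogeny $A_1 \to A_2$ over $\mathbb{C}$, descending to a geometric isogeny over $\overline{K}$. The delicate step will be verifying that $\sigma$, initially only an abstract isomorphism of algebraic groups, respects the extra Hodge-theoretic data (the filtration and the $F$-action) in a sufficiently compatible way, so as to upgrade to an isomorphism of Hodge structures rather than merely of underlying $\mathbb{Q}$-representations. Ruling out the a priori possibility that $\sigma$ exchanges the standard representation of $\mathrm{SU}(2,1)$ with its complex conjugate -- precluded by the matching of cocharacters, since both signatures are $(2,1)$ in the non-CM situation -- is the heart of the argument.
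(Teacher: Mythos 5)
Your overall strategy is close to the paper's: both proofs reduce, via Lemma~\ref{lemma:IsoKummerianImplesSameHodgeDimension}, to considering the three possible Albert types of a simple abelian threefold, dispose of the type~I case by Theorem~\ref{thm:IntroClassification}(2), dispose of the CM case by Proposition~\ref{prop:CMThreefolds}, and are left with the type~IV non-CM case. Your case reduction by explicitly comparing the Hodge-group dimensions $21$, $8$ and $\leq 3$ is a slightly different (and arguably more transparent) way of ruling out mixed types than the paper's argument, which instead compares the dimensions of the centre and of the derived subgroup of $\Hg(A_i)$. Both work.

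The genuine gap is in case (b). You propose a Goursat-type argument producing an abstract isomorphism $\sigma : \Hg(A_1) \to \Hg(A_2)$, and then you would need to promote $\sigma$ to a $\sigma$-equivariant isomorphism $V(A_1) \cong V(A_2)$ of polarised Hodge structures; you yourself flag this as ``the delicate step'' and ``the heart of the argument'' and only gesture at it (``precluded by the matching of cocharacters''). As written this does not constitute a proof: an isomorphism of algebraic groups does not by itself produce an isomorphism of their given representations, and one must genuinely rule out that $\sigma$ carries the representation $V_1$ to the outer twist of $V_2$. Note also that the identification $F_1 \cong F_2$ that you extract from $\sigma$ is circular as stated, since $F_i = \operatorname{End}_{\Hg(A_i)}(V_i)$ requires an identification of the two representations to compare; the correct way to obtain $F_1 \cong F_2$ is via the isomorphism $\sigma$ induces on the central tori of the Mumford--Tate groups, which is how the paper does it. The paper avoids your Goursat detour entirely: it sets $\mathfrak{g} = \operatorname{Lie}\MT(A_1 \times A_2)$ and, using the isogenies $\MT(A_1 \times A_2) \to \MT(A_i)$ together with Ribet's explicit description of $V_i \otimes \mathbb{C}$ as a $\mathfrak{g}_{\mathbb{C}} \cong \mathbb{C}^2 \oplus \mathfrak{sl}_3$--module, observes that $V_1 \otimes \mathbb{C}$ and $V_2 \otimes \mathbb{C}$ are isomorphic representations of $\mathfrak{g}_{\mathbb{C}}$; hence $\operatorname{Hom}_{\mathfrak{g}}(V_1, V_2) \neq 0$, which by the Hodge-theoretic description of $\operatorname{Hom}(A_1, A_2) \otimes \mathbb{Q}$ gives a nonzero, hence isogeny, map $A_1 \to A_2$. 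You would need to carry out an analogous representation-theoretic identification (or your cocharacter-matching argument in full) to close case (b).
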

\begin{proof}
Let $F_i := \operatorname{End}_{\overline{K}}(A_i) \otimes \mathbb{Q}$. The proof is divided into several cases:

\begin{enumerate}
\item at least one of $A_1, A_2$ is not of type IV in Albert's classification.%
\item both $A_1, A_2$ are of type IV in Albert's classification, and at least one of them has CM.%
\item both $A_1$ and $A_2$ are of type IV, and neither has CM.%
\end{enumerate}%

We recall that abelian threefolds satisfy the Mumford-Tate conjecture \cite{Ribet83classeson}, \cite[Theorem 6.1]{MR3494170}. We will also need the following facts: the Mumford-Tate group of an abelian variety $A$ is a torus if and only if $A$ has CM \cite[§5.3]{MTGps}; the Hodge group of a simple abelian threefold $A$ has nontrivial centre if and only if $A$ is of type IV in the sense of Albert \cite[§2.3]{MZ}; if $\operatorname{End}_{\overline{K}}(A)$ is an order in a quadratic imaginary field $F$, then $\MT(A)$ is isomorphic to the unitary group of $V := H_1(A(\mathbb{C}), \mathbb{Q})$, considered as an $F$-vector space of dimension $3$ endowed with a suitable $F$-Hermitian form $\psi$ \cite[§2.3]{MZ}. We will denote this group by $U_F(V, \psi)$. Finally, given a reductive group $G$ we denote by $G^{\operatorname{ss}}$ its derived (semisimple) subgroup.
\begin{enumerate}[wide, labelwidth=!, labelindent=0pt]
\item If neither $A_1$ nor $A_2$ are of type IV, then (since both $A_1$ and $A_2$ have odd dimension, hence odd relative dimension) Theorem \ref{thm:IntroClassification} shows that $A_1, A_2$ are geometrically isogenous. If only one of the two is of type IV, say $A_1$, then $\Hg(A_2)$ is semisimple, while the centre of $\Hg(A_1)$ is nontrivial. It follows that $\dim \Hg(A_1 \times A_2) = \dim \Hg(A_1 \times A_2)^{\operatorname{ss}} + \dim Z(\Hg(A_1 \times A_2)) \geq \dim \Hg(A_2)^{\operatorname{ss}} + \dim Z(\Hg(A_1)) \geq \dim \Hg(A_2) +1$, which contradicts Lemma \ref{lemma:IsoKummerianImplesSameHodgeDimension}.

\item By symmetry we may assume that $A_1$ has CM. If $A_2$ does not have CM, then its Mumford-Tate group is not a torus, so $\dim \Hg(A_2)^{\operatorname{ss}} > 0$. Recalling that $\Hg(A_1)$ is a torus, so that $\dim \Hg(A_1) = \dim Z(\Hg(A_1))$, as in part (1) it follows that $\dim \Hg(A_1 \times A_2) > \dim\Hg(A_1)$, contradiction.
Hence both $A_1$ and $A_2$ are CM, and the conclusion follows from Proposition \ref{prop:CMThreefolds}.
\item Suppose now that $A_1, A_2$ are both of type IV, but neither has CM. Then as recalled above $F_i$ is a quadratic imaginary field and we have $\MT(A_i) = U_{F_i}(V_i, \psi_i)$, where $V_i=H_1(A_i, \mathbb{Q})$ and $\psi_i$ is a suitable $F_i$-Hermitian form. By Lemma \ref{lemma:IsoKummerianImplesSameHodgeDimension}, the projection $\MT(A_1 \times A_2) \to \MT(A_i)$ is an isogeny, so the central tori of $\MT(A_1) \cong U_{F_1}(V_1, \psi_1)$ and $\MT(A_2) \cong U_{F_2}(V_2, \psi_2)$ are isogenous. Since these tori are $T_{F_1}, T_{F_2}$ respectively, one sees easily that $F_1=F_2$.
Let $\mathfrak{g}$ be the Lie algebra of $\MT(A_1 \times A_2)$. %
Using again the fact that $\MT(A_1 \times A_2) \to \MT(A_i)$ is an isogeny, we have
\[
\mathfrak{g}_{\mathbb{C}} \cong \operatorname{Lie}(\MT(A_i)) \otimes \mathbb{C} \cong\operatorname{Lie}(U_{F_i}(V_i, \psi_i)) \otimes \mathbb{C} \cong  \mathbb{C}^2 \oplus \mathfrak{sl}_3,
\]
where $\mathbb{C}^2$ can be identified with $F \otimes_{\mathbb{Q}} \mathbb{C}$ (here $F=F_1=F_2$) via 
\[
\begin{array}{ccc}
F \otimes_\mathbb{Q} \mathbb{C} & \to & \mathbb{C}^2 \\
f \otimes z & \mapsto & (zf, z \overline{f}).
\end{array}
\]
The structure of the cohomology groups $V_i \otimes \mathbb{C}$ (which are 6-dimensional $\mathbb{C}$-vector spaces) as representations of $\mathfrak{g}_\mathbb{C}$ is given in \cite[Proof of Theorem 3]{Ribet83classeson}: it is the direct sum of $\chi_1 \otimes \operatorname{Std}$ and $\chi_{2} \otimes \operatorname{Std}$, where $\operatorname{Std}$ is the standard representation of $\mathfrak{sl}_3$ and $\chi_j : \mathbb{C}^2 \to \mathbb{C}$ is the projection on the $j$-th factor ($j=1,2$). 
In particular, we have $V_1 \otimes \mathbb{C} \cong V_2 \otimes \mathbb{C}$ as representations of $\mathfrak{g}_\mathbb{C}$. %
Using the fact that $\MT(A_1 \times A_2)$ is connected, hence that (for any representation) the invariants for $\MT(A_1 \times A_2)$ and for $\mathfrak{g}$ coincide, we obtain
\[
\begin{aligned}
\dim_{\mathbb{Q}} \operatorname{Hom}_{\mathbb{C}}(A_1,A_2) \otimes \mathbb{Q} 
& = \dim_{\mathbb{Q}} \operatorname{Hom}_{\MT(A_1 \times A_2)}( V_1, V_2 ) \\
& = \dim_{\mathbb{Q}} \operatorname{Hom}_{ \mathfrak{g} }( V_1, V_2 ) \\
& = \dim_{\mathbb{C}} \operatorname{Hom}_{ \mathfrak{g}_{\mathbb{C}} }( V_1 \otimes \mathbb{C}, V_2 \otimes \mathbb{C} ) >0,
\end{aligned}
\]
because the two $\mathfrak{g}_\mathbb{C}$-representations $V_1 \otimes \mathbb{C}$ and $ V_2 \otimes \mathbb{C}$ are isomorphic. Since $A_1, A_2$ are simple, any non-zero map between them is an isogeny, and we are done.
\end{enumerate}
\end{proof}

\begin{corollary}\label{cor:DimensionAtMost3}
Let $K$ be a number field and let $A_1, A_2/K$ be strongly iso-Kummerian abelian varieties with $\dim(A_1) \leq 3, \dim(A_2) \leq 3$. Then $A_1$ and $A_2$ have the same isogeny factors (possibly with different multiplicities) over $\overline{K}$.
\end{corollary}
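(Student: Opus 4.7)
My plan is to show that every simple isogeny factor of $(A_1)_{\overline K}$ is geometrically isogenous to some simple factor of $(A_2)_{\overline K}$; by symmetry this suffices. Suppose, for contradiction, that some simple factor $B$ of $(A_1)_{\overline K}$ is not geometrically isogenous to any simple factor of $A_2$. Since $\dim(A_1), \dim(A_2) \leq 3$, every simple factor of both $A_1$ and $A_2$ has dimension at most $3$, and therefore satisfies the Mumford-Tate conjecture (by Ribet's theorem in dimension $3$, and trivially in lower dimensions); this extends to $A_1$, $A_2$ and $A_1 \times A_2$ by \cite{MR4009176}. Lemma \ref{lemma:IsoKummerianImplesSameHodgeDimension} gives $\dim \Hg(A_1 \times A_2) = \dim \Hg(A_2)$, and applying Proposition \ref{prop_Products} to the surjection $A_1 \times A_2 \twoheadrightarrow B \times A_2$ yields
\[
\dim \Hg(A_2) = \dim \Hg(A_1 \times A_2) \geq \dim \Hg(B \times A_2) \geq \dim \Hg(A_2),
\]
so that $\dim \Hg(B \times A_2) = \dim \Hg(A_2)$. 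The proof then reduces to ruling out this equality.

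I would argue by cases on the Albert type of $B$. If $B$ has dimension $1$ or $2$, or is a simple threefold of type I (types II and III do not occur in dimension $3$ by the dimensional constraints of Albert's classification, since they would require $2[F:\mathbb{Q}]$ to divide $3$), then $B$ has odd relative dimension. One can then use the splitting criteria of \cite{MR3494170} (in particular Theorem 1.1 of op.~cit.) to establish $\Hg(B \times A_2') \cong \Hg(B) \times \Hg(A_2')$, where $A_2'$ is obtained from $A_2$ by removing its dim-$3$ type-IV factors; as $\dim \Hg(B) > 0$, this already contradicts the displayed equality whenever $A_2$ has no dim-$3$ type-IV factors. The remaining possibility is that $B$ itself or some simple factor of $A_2$ is a dim-$3$ abelian variety of type IV (either a CM threefold or one whose endomorphism algebra is an imaginary quadratic field). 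In that scenario, after again using \cite{MR3494170} to discard the non-type-IV factors of $A_2$, one is reduced to a pairwise comparison of $B$ with each type-IV simple factor $C$ of $A_2$: when both $B$ and $C$ are CM threefolds this is handled by Proposition \ref{prop:CMThreefolds}, when $C$ is a type-IV threefold with quadratic-imaginary endomorphisms one adapts the Lie-algebra argument of case (3) in the proof of Proposition \ref{prop:g3}, and when $C$ has dimension $1$ or $2$ (and is therefore CM) one directly compares ranks of the central tori in the relevant Hodge groups.

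The main obstacle is the mixed type-IV situation, in which $B$ and several simple factors of $A_2$ are all of type IV. In that case the Hodge groups can share large central tori (for instance when the endomorphism algebras have a common imaginary quadratic subfield), so naive ``independence'' arguments of the \cite{MR3494170} type break down. Controlling these interactions carefully -- and verifying that a non-isogenous $B$ still strictly enlarges $\dim \Hg(A_2)$ -- requires the explicit linear-algebra description of Mumford-Tate groups of products of CM abelian varieties from Section \ref{sect:CMMT}, combined with the character- and representation-theoretic ingredients underlying Propositions \ref{prop:CMThreefolds} and \ref{prop:g3}.
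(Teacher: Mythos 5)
Your overall strategy (use Lemma \ref{lemma:IsoKummerianImplesSameHodgeDimension} to get $\dim\Hg(B\times A_2)=\dim\Hg(A_2)$ for a putative extra factor $B$ of $A_1$, then rule this out) is reasonable, but it misses the key structural simplification that makes the paper's proof short, and the gap you flag at the end is a genuine one rather than a technicality you can wave away.

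The crucial observation is that the constraint $\dim A_i\le 3$ leaves only two alternatives: either \emph{every} simple factor of both $A_1$ and $A_2$ has dimension at most $2$ --- in which case Theorem \ref{thm:IntroClassification}(1) applies directly, no type analysis needed --- or one of $A_1,A_2$ is \emph{itself} absolutely simple of dimension $3$, since a dim-$3$ simple factor already exhausts the whole variety. In the latter case the paper argues that the other variety must also be simple of dimension $3$ (if it had a factor $B$ of dimension $\le 2$, one invokes the [MZ, \S 5.4] splitting criterion: either $\Hg(A_1\times B)=\Hg(A_1)\times\Hg(B)$, contradicting the dimension equality, or all factors of $A_2$ are CM elliptic curves with CM by the same field, forcing $\dim\Hg(A_2)=1<\dim\Hg(A_1)$, also a contradiction), and then one is precisely in the setting of Proposition \ref{prop:g3}. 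There simply is no ``mixed type-IV situation'' to control: the dimension bound forbids $A_2$ from having several type-IV simple factors one of which has dimension $3$.

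Your proposal, by contrast, tries to compare $B$ against the simple factors of $A_2$ one at a time, which leads to the problem you acknowledge: you cannot simply ``discard the non-type-IV factors of $A_2$'' from the inequality $\dim\Hg(B\times A_2)=\dim\Hg(A_2)$, since removing a factor of $A_2$ can drop $\dim\Hg(A_2)$ as well, and the central tori of the various type-IV factors may interact. Your final paragraph is thus not a remark but an unfilled hole, and the dimension constraint is exactly what the paper uses to close it. Two smaller inaccuracies: a simple abelian surface of type I with $\operatorname{End}^0=\mathbb{Q}$ has relative dimension $2$, so ``$B$ has dimension $1$ or $2$ $\Rightarrow$ odd relative dimension'' is false (you should instead invoke condition (1) of Theorem \ref{thm:IntroClassification} for those factors); and a type-IV simple abelian surface is not ``therefore CM'' --- its endomorphism algebra may be only an imaginary quadratic field.
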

\begin{proof}
If all the absolutely simple isogeny factors of $A_1, A_2$ have dimension at most 2 we are done by Theorem \ref{thm:IntroClassification}. Thus up to symmetry we may assume that $A_1$ is absolutely simple of dimension $3$. It now suffices to show that $A_2$ is also absolutely simple of dimension 3, because the result will then follow from the previous proposition. By contradiction, let $B$ be an absolutely simple factor of $A_2$ of dimension at most 2. By \cite[§5.4]{MZ} we have $\Hg(A_1\times B)=\Hg(A_1) \times \Hg(B)$, unless $B$ is a CM elliptic curve with CM by the quadratic imaginary field $F$ and $F$ embeds in $\operatorname{End}^0_{\overline{K}}(A_1)$. The case $\Hg(A_1\times B)=\Hg(A_1) \times \Hg(B)$ is impossible by Lemma \ref{lemma:IsoKummerianImplesSameHodgeDimension}. In the remaining case, \textit{every} geometrically simple isogeny factor of $A_2$ is an elliptic curve with CM by the same quadratic imaginary field, and therefore $A_2$ is geometrically isogenous to $E^3$ for a suitable CM elliptic curve $E$. In this case, $\dim \Hg(A_2)=\dim \Hg(E^3)=1$, while $\dim \Hg(A_1) > 1$ by \cite{MZ}, again contradicting Lemma \ref{lemma:IsoKummerianImplesSameHodgeDimension} and finishing the proof.
\end{proof}

\subsubsection{An example of Shioda}\label{sect:Shioda}

We conclude our discussion of strongly iso-Kummerian CM abelian varieties by constructing an explicit strongly iso-Kummerian pair defined over $\mathbb{Q}$. This example was first noticed by Shioda \cite{Shioda81algebraiccycles} in a slightly different context, and was also considered by Zywina \cite[§1.7]{ZywinaMonodromy}.

\begin{theorem}\label{thm:Shioda}
Let $J/\mathbb{Q}$ be the Jacobian of the smooth projective curve $C$ with hyperelliptic equation $y^2=x^9-1$. Then $J$ is isogenous over $\mathbb{Q}$ to $X \times E$, where $X$ is an absolutely simple abelian threefold with CM by $\mathbb{Q}(\zeta_9)$ and $E$ is the CM elliptic curve $y^2=x^3-1$. The abelian varieties $X \times E$ and $X$ are strongly iso-Kummerian over $\mathbb{Q}$.
\end{theorem}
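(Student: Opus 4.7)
The proof has two main parts: the $\mathbb{Q}$-isogeny decomposition of $J$ together with the identification of $X$ as a CM threefold, and the verification of strong iso-Kummerianness.

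For the first part, the cubic $\mathbb{Q}$-morphism $\pi: C \to E$, $(x,y) \mapsto (x^3, y)$ (identifying $E$ with the model $y^2 = z^3 - 1$) is defined over $\mathbb{Q}$ and surjects onto $E$, so by Poincaré reducibility $J \sim_\mathbb{Q} X \times E$ with $\dim X = 3$. Over $K := \mathbb{Q}(\zeta_9)$ the automorphism $\alpha: (x,y) \mapsto (\zeta_9 x, y)$ of $C$ embeds $\mathbb{Z}[\zeta_9]$ into $\operatorname{End}_K(J)$; on the basis $\omega_i = x^i\,dx/y$ ($i = 0, 1, 2, 3$) of $H^0(C, \Omega^1)$ one computes $\alpha^* \omega_i = \zeta_9^{i+1} \omega_i$. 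The eigenvalue $\zeta_9^3 = \zeta_3$ at $i = 2$ corresponds to $\pi^*(dz/y)$ and thus to the $E$-factor, while the remaining eigenvalues identify the CM type of $X$ as $\Phi = \{1, 2, 4\} \subseteq (\mathbb{Z}/9\mathbb{Z})^\times$. A direct check that no element of $(\mathbb{Z}/9\mathbb{Z})^\times \setminus \{1\}$ stabilises $\{1, 2, 4\}$ shows that $\Phi$ is primitive, so $X$ is absolutely simple with $\operatorname{End}^0_{\overline{\mathbb{Q}}}(X) \cong \mathbb{Q}(\zeta_9)$.

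For the iso-Kummerian property, by the Chinese remainder theorem it suffices to prove $\mathbb{Q}(E[\ell^n]) \subseteq \mathbb{Q}(X[\ell^n])$ for every prime $\ell$ and every $n \geq 1$. Over $K$, after passing to $\mathbb{Q}$-isogenous copies of $X$ and $E$ with maximal CM orders and good reduction (and tracking the effect of these isogenies on the torsion fields, a technical step), Theorem \ref{thm:CM} gives idele-class descriptions of $\rho_{X, \ell^\infty}|_{\abGal{K}}$ and $\rho_{E, \ell^\infty}|_{\abGal{K}}$ via reflex norms. With $\sigma_a$ denoting the element of $\operatorname{Gal}(K/\mathbb{Q})$ sending $\zeta_9 \mapsto \zeta_9^a$, and noting $\Phi^* = \{1, 5, 7\}$, one has (modulo finite-order twists)
\[
\rho_X(b)^{-1} = \sigma_1(b_\ell)\sigma_5(b_\ell)\sigma_7(b_\ell), \qquad \rho_E(b)^{-1} = N_{K/\mathbb{Q}(\zeta_3)}(b_\ell) = \sigma_1(b_\ell)\sigma_4(b_\ell)\sigma_7(b_\ell).
\]
Expanding the triple product indexed by $\operatorname{Gal}(K/\mathbb{Q}(\zeta_3)) \times \Phi^* = \{1, 4, 7\} \times \{1, 5, 7\}$ and tallying the nine resulting exponents in $(\mathbb{Z}/9\mathbb{Z})^\times$ yields the identity
\[
N_{K/\mathbb{Q}(\zeta_3)}\bigl(N_{\Phi^*}(b)\bigr) = N_{K/\mathbb{Q}(\zeta_3)}(b) \cdot N_{K/\mathbb{Q}}(b),
\]
which exhibits $\rho_E|_{\abGal{K}}$ as the composition of $\rho_X|_{\abGal{K}}$ with $N_{K/\mathbb{Q}(\zeta_3)}$, corrected by the cyclotomic character $N_{K/\mathbb{Q}}$.

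To descend the conclusion from $\abGal{K}$ to $\abGal{\mathbb{Q}}$, I would combine two observations. First, $\mu_{\ell^n} \subseteq X[\ell^n]$ via the Weil pairing, so the restriction of the cyclotomic character modulo $\ell^n$ is already visible in $X[\ell^n]$. Second, $\mathbb{Q}(\zeta_9) \subseteq \mathbb{Q}(X[\ell^n])$ for every prime $\ell$ and every $n \geq 1$: the image of $\abGal{\mathbb{Q}}$ in $\operatorname{Aut}(X[\ell^n])$ modulo the Cartan $(\mathcal{O}_{\mathbb{Q}(\zeta_9)}/\ell^n)^\times$ surjects onto $\operatorname{Gal}(\mathbb{Q}(\zeta_9)/\mathbb{Q})$ through the action of $\abGal{\mathbb{Q}}$ on the geometric CM structure. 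Combining these with the identity above, any $\sigma \in \abGal{\mathbb{Q}}$ fixing $X[\ell^n]$ pointwise lies in $\abGal{K}$, has $\rho_X(\sigma)$ and $\chi_\ell(\sigma)$ both trivial modulo $\ell^n$, and therefore has $\rho_E(\sigma) \equiv 1 \pmod{\ell^n}$. The main technical obstacle is the bookkeeping of the finite-order characters $\varepsilon_X, \varepsilon_E$ from Theorem \ref{thm:CM} at the primes of bad reduction ($\ell \in \{2, 3\}$) and the control of the isogeny adjustments alluded to above; these finitely many cases should be addressable by a direct computation using the explicit models $y^2 = x^9 - 1$ and $y^2 = x^3 - 1$.
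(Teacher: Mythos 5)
Your overall strategy is close to the paper's, but the central verification takes a genuinely different route and there are two real gaps.

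\textbf{Where you differ.} The paper also works over $F=\mathbb{Q}(\zeta_9)$ via the idele-class description of Theorem \ref{thm:CM}, but it never invokes the cyclotomic character. Instead it applies $\sigma_5$ twice to the congruence $\varepsilon_X(a_3)\,\sigma_1(a_\ell)^{-1}\sigma_5(a_\ell)^{-1}\sigma_7(a_\ell)^{-1}\equiv 1\ (\mathrm{mod}\ \ell^n)$, obtaining $\varepsilon_X(a_3)^{21}\,\sigma_1(a_\ell)^{-1}\sigma_4(a_\ell)^{-1}\sigma_7(a_\ell)^{-1}\equiv 1$ and then uses $\varepsilon_X(a_3)^{21}=\varepsilon_X(a_3)^3=\varepsilon_E(a_3)$. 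Your identity $N_{K/\mathbb{Q}(\zeta_3)}(N_{\Phi^*}(b))=N_{K/\mathbb{Q}(\zeta_3)}(b)\,N_{K/\mathbb{Q}}(b)$ (which does check out on exponents) achieves the same passage from $\{1,5,7\}$ to $\{1,4,7\}$ but routes it through $N_{K/\mathbb{Q}}$, i.e.\ through $\chi_\ell$ extracted from the Weil pairing on $X$. Both routes are valid and of comparable length; yours is perhaps more conceptual, the paper's avoids needing a polarization on $X$. They are in the end equivalent (one can verify that $N_{K/\mathbb{Q}(\zeta_3)}$ restricted to $\mu_{18}\subset F$ is precisely cubing).

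\textbf{Gap 1: the relation between $\varepsilon_X$ and $\varepsilon_E$.} This is not bookkeeping. Your argument needs $\varepsilon_E = N_{K/\mathbb{Q}(\zeta_3)}\circ\varepsilon_X$ (equivalently, after the $\mu_{18}$ computation, $\varepsilon_E=\varepsilon_X^3$) to hold on the unit idele factors at $2$ and $3$, and nothing in the reflex-norm identity forces this. The paper proves it by a genuine geometric argument: at a place of bad reduction, $\varepsilon_X$ and $\varepsilon_E$ are given by the action of inertia on the special fibre of a Néron/semistable model; one realizes both as images of an inertial automorphism $\varphi_{J,w}(\sigma)\in\operatorname{Aut}(J)=\mu(F)$ under the restriction maps $\operatorname{Aut}(J)\to\operatorname{Aut}(X)$ (the identity) and $\operatorname{Aut}(J)\to\operatorname{Aut}(E)$ (cubing, because $\pi:C\to E$ is $(x,y)\mapsto(x^3,y)$). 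Without this, one cannot exclude that $\varepsilon_X$ and $\varepsilon_E$ differ by a finite character that would break the inclusion of torsion fields at $\ell\in\{2,3\}$. ``A direct computation using the explicit models'' does not obviously replace this: the $\varepsilon$'s are defined via models over unspecified finite extensions where $X$ and $E$ become semistable, not via the original Weierstrass equations.

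\textbf{Gap 2: the descent to $\mathbb{Q}$ at $\ell^n=2$.} You invoke Silverberg-type rigidity to get $\mathbb{Q}(\zeta_9)\subseteq\mathbb{Q}(X[\ell^n])$, but that theorem only applies for $\ell^n\geq 3$; the case $\ell^n=2$ must be argued separately. The paper does it with a direct $2$-torsion computation on $J$: since the isogeny $X\times E\to J$ has odd degree, $J[2]\cong X[2]\oplus E[2]$, and writing down the divisor classes $e_i=[(\zeta_9^i,0)-(\infty)]$ shows $\mathbb{Q}(X[2])=\mathbb{Q}(\zeta_9)\supseteq\mathbb{Q}(\zeta_3)=\mathbb{Q}(E[2])$. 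Your proposal silently omits this case.

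A minor further point: the phrase ``passing to $\mathbb{Q}$-isogenous copies of $X$ and $E$ with \dots good reduction'' is not available over the fixed field $F$ (good reduction requires a further extension, which would break the descent to $\mathbb{Q}$); the paper instead keeps $X,E$ as given over $F$ and handles bad reduction entirely through the characters $\varepsilon_X,\varepsilon_E$, which is why Gap 1 is essential.
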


The proof of Theorem \ref{thm:Shioda} will occupy the rest of this section. We start by describing the abelian variety $X$.
The map $\pi : C \to E$ given by $(x,y) \mapsto (x^3,y)$ shows that $E$ appears as an isogeny factor of $J$. We let $X \subseteq J$ be the identity component of the kernel of $\pi_* : J \to E$: it is a 3-dimensional abelian subvariety of $J$, absolutely simple by results of Shioda \cite[Example 6.1]{Shioda81algebraiccycles}.
We may identify the complex uniformisation of $J$ with $H^0(C_{\mathbb{C}}, \Omega^1_C) = \mathbb{C} \langle \frac{dx}{2y}, x\frac{dx}{2y}, x^2\frac{dx}{2y}, x^3\frac{dx}{2y} \rangle$.
Pulling back regular differentials shows that $\pi^*E$ corresponds to the vector subspace $\mathbb{C} \langle \frac{x^2 \, dx}{2y} \rangle$ of $H^0(C_{\mathbb{C}}, \Omega^1_C)$, so $J/E$ (which is isogenous to $X$) has natural analytic uniformisation given by the quotient
\[
\mathbb{C} \langle \frac{dx}{2y}, x\frac{dx}{2y}, x^2\frac{dx}{2y}, x^3\frac{dx}{2y} \rangle / \mathbb{C} \langle x^2 \frac{dx}{2y} \rangle = \mathbb{C} \langle \frac{dx}{2y}, x\frac{dx}{2y}, x^3\frac{dx}{2y} \rangle.
\]
The automorphism $\alpha : (x,y) \mapsto (\zeta_9 x, y)$ induces an action of $F := \mathbb{Q}(\zeta_9)$ on this vector space. Since $\alpha^* x^i \frac{ dx}{y} = \zeta_9^{1+i} \frac{dx}{y}$, the characters appearing in this representation are $\zeta_9 \mapsto \zeta_9^i$ for $i=1,2,4$. This identifies the CM type $\Phi = \{\sigma_1, \sigma_2, \sigma_4\}$ of $\operatorname{Gal}(\mathbb{Q}(\zeta_9)/\mathbb{Q}) = \left\{ \sigma_i : \zeta_9 \mapsto \zeta_9^i \right\}$, which is the CM type of both $X$ and $J/E$. The reflex field is $E$ itself, and the dual CM type is $\Phi^* = \{\sigma_1, \sigma_5, \sigma_7\}$.

It is clear that we have an action of $\mathbb{Z}[\zeta_9] = \mathcal{O}_F$ on $J$. We denote by $[\alpha] \in \operatorname{End}_F(J)$ the endomorphism corresponding to $\alpha \in \mathcal{O}_F$.
For every $\alpha \in \mathcal{O}_F$ we have $[\alpha](X) = X$ (indeed $X$ is the unique abelian subvariety of $J$ of dimension 3), so $[\alpha]$ induces an endomorphism of $X$, and it follows easily that $\operatorname{End}_F(X) = \mathcal{O}_F$. Moreover, as $\pi_* \pi^* : E \to E$ is multiplication by $\deg \pi = 3$, we obtain that the canonical isogeny $X \times E \to J$ given by addition has degree a power of 3. In particular, for every positive integer $d$ prime to 3 we have $J[d] \cong X[d] \oplus E[d]$.

We now prove that $A := X \times E$ and $X$ are strongly iso-Kummerian over $F$. Once this is done, it will be easy to deduce that the same statement also holds over $\mathbb{Q}$. We certainly have $F(X[d]) \subseteq F(A[d])$, so we only need to show the other inclusion. Since $F(A[d]) = F(X[d], E[d])$, it suffices to show that $F(E[d]) \subseteq F(X[d])$. By Galois theory, this amounts to proving that every $\sigma \in \abGal{F}$ that fixes $X[d]$ also fixes $E[d]$. By the Chinese Remainder Theorem, it suffices to restrict to the case of $d$ being a power $\ell^n$ of a prime $\ell$. Proposition 2.3 in \cite{MR819231} shows that the open subgroup $F^\times \cdot \left( \prod_{v \text{ finite}} U_v \times \prod_{v \text{ infinite}} F_v^\times \right)$ of $I_F$ corresponds to the abelianised absolute Galois group of the Hilbert class field of $F$. Since $F$ has class number 1, we find that this subgroup 
maps surjectively onto $\abGal{F}^{\operatorname{ab}}$. As in the proof of Theorem \ref{thm:MainBody}, the $\ell$-adic Galois representations attached to $X$ and $E$ over $F$ are then completely determined by their restrictions to $\prod_{v \text{ finite}} U_v$.
More precisely, these representations %
are described by Theorem \ref{thm:CM}: given a finite idèle $a=(a_v) \in \prod_{v} U_v$ 
we have
\[
\rho_{E, \ell^\infty}(a) = \varepsilon_E(a) N_{F/\mathbb{Q}(\zeta_3)} (a_\ell)^{-1}, \quad \rho_{X, \ell^\infty}(a) = \varepsilon_X(a) N_{\Phi^*} (a_\ell)^{-1},
\]
where we denote by $N_{\Phi^*}$ both the reflex norm $F^\times \to F^\times$ attached to the CM type of $X$ described above and its extension to $(F \otimes \mathbb{Q}_\ell)^\times \to (F \otimes \mathbb{Q}_\ell)^\times$. Here $\varepsilon_E$ and $\varepsilon_X$ are characters of $\prod_{v} U_v$ with values in the roots of unity in $\mathbb{Q}(\zeta_3)$ and $F$, respectively. Since it is easily checked that $C$ has good reduction away from 2 and 3, it follows from Theorem \ref{thm:CM} that $\varepsilon_X$ and $\varepsilon_E$ factor via the product $\prod_{w \mid 6}U_w$ of local units $U_w$ at the places $w$ of $\mathcal{O}_F$ above $2$ and $3$. We now prove that $\varepsilon_E(a)=\varepsilon_X(a)^3$, or equivalently $\varepsilon_E(a_w)=\varepsilon_X(a_w)^3$ for every place $w$ of bad reduction.

In order to do this, recall from \cite[p.~508]{MR0236190} that the restriction of $\varepsilon_X$ to $U_w$ coincides with the homomorphism $\varphi_w$ described in the proof of \cite[Theorem 6 (b)]{MR0236190}. 
This homomorphism is obtained as follows: let $L/F$ be  a finite extension over which $X$ acquires good reduction at the places above $w$. The homomorphism $\varphi_w$ is then given by the action of inertia on the special fibre of the Néron model of the abelian variety $X/L$. Observe that, since all the automorphisms of $J$ send $X$ to itself, one can equivalently consider the action of inertia on the Néron model of $J$ instead. We now simplify this description by using the fact that $J$ is a Jacobian.

An element $\sigma$ in the inertia at $w$ acts on the Néron model $\mathcal{J}_L$ of $J/L$ via the finite group $\operatorname{Aut}(\mathcal{J}_L) =\operatorname{Aut}(J_F) =\mu(F)$. Since this group coincides with the automorphism group of the curve $C$, we may equivalently describe the action of $\sigma$ on $\tilde{C}$, and obtain $\varphi_{J,w}(\sigma)$ as follows. We take as $L$ a finite extension of $F$ over which $C$ admits a semistable model $\tilde{C}$ (in particular, $J/L$ is semistable and hence has good reduction everywhere), and fix an isomorphism $\psi : C \to \tilde{C}$ defined over $L$. Then $\psi^{-1} \circ {}^\sigma \psi$ is an automorphism of $C$, which is precisely $\varphi_{J,w}(\sigma)$ as it can be seen by embedding $C$ into its Jacobian.
Since $J$ has good reduction over $L$, so do its subvariety $X$ and its quotient $E$. Thus we may use the same extension $L$ in order to compute the automorphisms of $X$ and $E$ induced by $\sigma$; these will be our $\varepsilon_X(\sigma)$ and $\varepsilon_E(\sigma)$. To describe them, we first notice that we have already proved that the restriction map $\mathcal{O}_F = \operatorname{End}(J) \to \operatorname{End}(X)=\mathcal{O}_F$ is the identity. Thus $\varepsilon_{X}(\sigma)=\varphi_{J,w}(\sigma)$. On the other hand, since the map $\pi : C \to E$ is $(x,y) \mapsto (x^3,y)$, the map $\mu(F) = \operatorname{Aut}_F(J) \to \operatorname{Aut}_F(E) = \mu(\mathbb{Q}(\zeta_3))$ is easily seen to be $\zeta \mapsto \zeta^3$. Thus $\varepsilon_E(\sigma)=\varphi_{J,w}(\sigma)^3$, and we conclude, as desired, that $\varepsilon_E(a_w)=\varepsilon_X(a_w)^3$ for all $a_w \in U_w$.

We can finally prove the inclusion $F(E[\ell^n]) \subseteq F(X[\ell^n])$. Indeed, let $a \in \prod_{v \text{ finite}} U_v$ be an element whose image in $\abGal{F}^{\operatorname{ab}}$ fixes $X[\ell^n]$. By what we already proved, this amounts to the condition
\begin{equation}\label{eq:TrivialModElln}
\varepsilon_X(a_3) \sigma_1(a_\ell)^{-1}\sigma_5(a_\ell)^{-1}\sigma_7(a_\ell)^{-1} \equiv 1 \pmod{\ell^n}.
\end{equation}
Applying $\sigma_5$ to both sides of this congruence we find
$
\varepsilon_X(a_3)^5 \sigma_5(a_\ell)^{-1}\sigma_7(a_\ell)^{-1}\sigma_{-1}(a_\ell)^{-1} \equiv 1 \pmod{\ell^n}$, and comparing with Equation \eqref{eq:TrivialModElln} we obtain $
\varepsilon_X(a_3)^5\sigma_{-1}(a_\ell)^{-1} \equiv \varepsilon_X(a_3) \sigma_1(a_\ell)^{-1} \pmod{\ell^n}$. Applying $\sigma_5$ again we get $
\varepsilon_X(a_3)^{20}\sigma_{4}(a_\ell)^{-1} \equiv \sigma_5(a_\ell)^{-1} \pmod{\ell^n}$, and replacing this congruence in Equation \eqref{eq:TrivialModElln} we finally obtain
\[
\varepsilon_X(a_3)^{21} \sigma_1(a_\ell)^{-1}\sigma_4(a_\ell)^{-1}\sigma_7(a_\ell)^{-1} \equiv 1 \pmod{\ell^n}.
\]
As $\varepsilon_X(a_3)$ is an $18$-th root of unity we have $\varepsilon_X(a_3)^{21}=\varepsilon_X(a_3)^{3}=\varepsilon_E(a_3)$, and therefore 
\[
1 \equiv \varepsilon_X(a_3)^{21} \sigma_1(a_\ell)^{-1}\sigma_4(a_\ell)^{-1}\sigma_7(a_\ell)^{-1} \equiv  \varepsilon_E(a_3) N_{F/\mathbb{Q}(\zeta_3)}(a_\ell)^{-1} = \rho_{E, \ell^n}(a) \pmod{\ell^n},
\]
hence $a$ also acts trivially on $E[\ell^n]$. By Galois correspondence, this shows that $F(E[\ell^n]) \subseteq F(A[\ell^n])$ for all $\ell$ and $n$, and therefore that $A$ and $X$ are strongly iso-Kummerian over $F$. 

It remains to show that the same holds over $\mathbb{Q}$. As above, it suffices to prove that $\mathbb{Q}(E[\ell^n]) \subseteq \mathbb{Q}(X[\ell^n])$. By a theorem of Silverberg \cite{MR1154704}, for $\ell^n \geq 3$ the $\ell^n$-torsion field $\mathbb{Q}(X[\ell^n])$ contains the field of definition of the endomorphisms of $X$, which is $F$ (notice that by Theorem \ref{thm:CM} every field of definition of the endomorphisms contains the reflex field of the CM type of $X$). Hence we obtain $\mathbb{Q}(X[\ell^n]) = F(X[\ell^n]) \supseteq F(E[\ell^n]) \supseteq \mathbb{Q}(E[\ell^n])$ as desired.
Finally we consider the case $\ell^n=2$. Since the isogeny $E \times X \sim J$ has odd degree we have $J[2] \cong E[2] \oplus X[2]$. By well-known properties of hyperelliptic Jacobians, the $2$-torsion $J[2]$ is the $\mathbb{F}_2$-vector space spanned by the divisor classes $e_i = [( \zeta_9^i , 0 ) - (\infty)]$ for $i=0,\ldots,8$, subject to the only relation $\sum_{i=0}^8 e_i = 0$. Similarly, $E[2]$ is spanned by $f_i = [ ( \zeta_3^i ,0) - (\infty) ]$ for $i=0,1,2$, with the only relation $\sum_{i=0}^2 f_i=0$. The pullback $\pi^* : E \to J$ sends $f_i$ to $e_i + e_{i+3} + e_{i+6}$. Thus $X[2]=J[2]/E[2]$ is free with basis the classes of $e_0,\ldots,e_5$, and from this one checks that $\mathbb{Q}(E[2])$ is $\mathbb{Q}(\zeta_3)$, while $\mathbb{Q}(X[2])$ is $F$. Thus also for $\ell^n=2$ we have $\mathbb{Q}(E[\ell^n]) \subseteq \mathbb{Q}(X[\ell^n])$, which concludes the proof of Theorem \ref{thm:Shioda}.

\begin{remark}
The projection $\MT(X \times E) \to \MT(X)$ is an isomorphism. This fact can be deduced from the above proof, but we will show it using the algorithm suggested by Lemmas \ref{lemma:MTIsoLinearAlgebra} and \ref{lemma:MTIsoLinearAlgebraExplicit}. In the notation of those lemmas, we can take $E_1=F, E_2=\mathbb{Q}(\zeta_3)$ and $L=F$. The character group $\widehat{T}_F$ has natural basis $\{\sigma_i : (i,9)=1\}$, while $\widehat{T}_{E_2}$ has basis $\{\sigma, \overline{\sigma}\}$ given by the two complex conjugate embeddings of $\mathbb{Q}(\zeta_3)$ into $\mathbb{C}$. The morphism $N_1^*$ induced on character groups by the norm $N_1$ is the identity, while $N_2^*$ sends $\sigma$ to $\sigma_1+\sigma_4+\sigma_7$ and $\overline{\sigma}$ to $\sigma_2+\sigma_5+\sigma_8$. Concerning the reflex norms, we have that $\phi_2^*$ is the identity, while $\phi_1^* : \widehat{T}_L=\mathbb{Z}[\sigma_i] \to \widehat{T}_L = \mathbb{Z}[\sigma_i]$ is multiplication by $\sigma_1+\sigma_5+\sigma_7$. Thus $N_1^*\phi_1^* + N_2^*\phi_2^*$ is represented by the matrix
\[
\begin{pmatrix}
1 & 1 & 1 & 0 & 0 & 0 & 1 & 0 \\
0 & 1 & 1 & 0 & 0 & 1 & 0 & 1 \\
0 & 0 & 1 & 0 & 1 & 1 & 1 & 0 \\
1 & 1 & 0 & 1 & 0 & 0 & 0 & 1 \\
1 & 0 & 0 & 1 & 1 & 0 & 1 & 0 \\
0 & 0 & 0 & 1 & 1 & 1 & 0 & 1
\end{pmatrix}.
\]
Here the first six columns are indexed by the $\sigma_i$ with $(i,9)=1$, and the last two by $\sigma, \overline{\sigma}$. The rows are indexed by the $\sigma_i$ with $(i,9)=1$. The column vectors
$(0,  0,  1,  0,  1, -1, -1,  0)^T$ and $(0, 0, 0,  1, -1,  1,  0, -1)^T$ are in the (right) kernel of this matrix, and together with $\widehat{T}_F \times \{0\}$ they clearly generate $\widehat{T}_F \times \widehat{T}_{\mathbb{Q}(\zeta_3)}$. By Lemma \ref{lemma:MTIsoLinearAlgebra}, this proves the claim.
\end{remark}

\section{Abelian varieties with $\operatorname{End}(A_{\overline{K}})=\mathbb{Z}$}\label{sect:EndZ}
We now consider the other end of the spectrum with respect to the CM case, namely, abelian varieties with trivial geometric endomorphism ring.
We will show that there exist abelian varieties $A, B$ with $\operatorname{End}_{\overline{K}}(A) = \operatorname{End}_{\overline{K}}(B) = \mathbb{Z}$ and without common isogeny factors for which the canonical projections $\Hg(A \times B) \to \Hg(A), \Hg(A \times B) \to \Hg(B)$ both have finite kernel. We are unfortunately unable to construct examples for which the kernels in question are actually trivial (see also Remark \ref{rmk:ParityObstruction}).
Our main tool is the Kuga-Satake construction, introduced in \cite{MR0210717}, and for which we refer the reader to \cite{vG}. See also \cite{Moonen_noteson} for the notion of Mumford-Tate group in the more general context of Hodge structure of arbitrary weight.

\medskip

Let $n$ be a fixed positive integer and $V$ be a $\mathbb{Q}$-vector space of dimension $n+2$. Fix a quadratic form $Q$ on $V$ of signature $(2-,n+)$. By \cite[Lemma 4.7]{vG} we can equip $V$ with a Hodge structure of weight 2 in such a way that $\dim V^{2,0}=1$ and $\MT(V)=\GO(Q)$, the group of orthogonal similitudes of $Q$. Let $h:\mathbb{S} \to \operatorname{GL}_{V_\mathbb{R}}$ be the morphism defining any such Hodge structure on $V$, where $\mathbb{S}=\operatorname{Res}_{\mathbb{C}/\mathbb{R}}(\mathbb{R})$ is the Deligne torus.

Let $C(Q)$ be the Clifford algebra over $Q$ and $C^+(Q)$ be its even part. The Kuga-Satake construction takes as input the data $(V,h,Q)$, and gives as output a \textit{polarisable} Hodge structure on the rational vector space $C^+(Q)$ of weight 1 and type $(1,0)\oplus(0,1)$, which we will denote by $(C^+(Q),h_s)$. By passing to the dual, such a Hodge structure identifies a complex abelian variety $A$ (up to isogeny): indeed, the category of complex abelian varieties (up to isogeny) and that of polarisable Hodge structures of type $(-1,0)\oplus(0,-1)$ are equivalent (see \cite[§8]{vG} for details).

Recall that $V$ is canonically a vector subspace of $C(Q)$, and that with this identification the group $\CSpin(Q)$ is defined as 
\[
\CSpin(Q)(R)=\left\{ g \in (C^+(Q) \otimes R)^* \bigm\vert g(V \otimes 1) g^{-1} \subseteq V \otimes 1 \right\}
\]
for every $\mathbb{Q}$-algebra $R$. %
For the general properties of the Spin group, the reader can refer to \cite[pp.~304-308]{FH} and \cite{MR1701598}. By its very definition, $\CSpin(Q)$ admits a representation
\begin{equation}\label{eq:CSpinRep}
\begin{matrix}
\rho: & \CSpin(Q) & \to & \operatorname{GL}_{C^+(Q)} \\
      &    g     & \mapsto & (x \mapsto gx),
\end{matrix}
\end{equation}
and it can be checked that $\operatorname{End}_{\CSpin(Q)} \left(C^+(Q) \right) \cong C^+(Q)$, the isomorphism being induced by the right action of $C^+(Q)$ on itself. Moreover, one has $\MT(C^+(Q),h_s)=\CSpin(Q)$ as soon as $\MT(V)=\GO(Q)$ \cite[Proposition 6.3]{vG}, and clearly the dual Hodge structure also has Mumford-Tate group $\GO(Q)$.
We will need to know that there is a certain \textit{spinor norm} $\operatorname{CSpin}(Q) \to \mathbb{G}_{m,\mathbb{Q}}$, and that the Spin group $\Spin(Q)$ can be described both as the derived subgroup of $\CSpin(Q)$ and as the kernel of the spinor norm. It is also easy to see the $\CSpin(Q)$ is the almost-direct product of $\Spin(Q)$ and $\mathbb{G}_m$.

We now turn to the study of the abelian variety $A$ obtained from the Kuga-Satake construction. Its dimension is
\[
\dim A = \frac{1}{2} \dim_{\mathbb{Q}} C^+(Q) = \frac{1}{4} \dim_{\mathbb{Q}} C(Q) = 2^n.
\]
The endomorphism algebra of $A$ is canonically identified with 
\[
\operatorname{End}_{\MT(C^+(Q))}(C^+(Q)) \cong \operatorname{End}_{\CSpin(Q)}(C^+(Q)) \cong C^+(Q).
\]
Choose $n=4k+2$ and $Q = x_1^2+x_2^2+\ldots+x_{n}^2-x_{n+1}^2-x_{n+2}^2$. 
The structure of $C^+(Q)$ is as follows:
\begin{proposition}\label{prop_KS} Let $n=4k+2$ and $Q = x_1^2+x_2^2+\ldots+x_{n}^2-x_{n+1}^2-x_{n+2}^2$, and write $\mathbb{H}$ for the rational quaternions $\mathbb{Q} \oplus \mathbb{Q}i\oplus \mathbb{Q}j \oplus \mathbb{Q}k$ with $i^2=j^2=k^2=-1, ij=-ji=k$.
\begin{itemize}
\item If $k$ is odd we have $C^+(Q) \cong \operatorname{Mat}_{2^{2k}}(\mathbb{H}) \oplus \operatorname{Mat}_{2^{2k}}(\mathbb{H})$;
\item If $k$ is even we have $C^+(Q) \cong \operatorname{Mat}_{2^{2k+1}}(\mathbb{Q}) \oplus \operatorname{Mat}_{2^{2k+1}}(\mathbb{Q})$.
\end{itemize}

\end{proposition}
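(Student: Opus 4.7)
The plan is to compute $C^+(Q)$ by reducing to a standard Clifford algebra and then bootstrapping from small base cases via an eightfold periodicity, working carefully over $\mathbb{Q}$ rather than $\mathbb{R}$.

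First, I would establish the classical isomorphism $C^+(Q) \cong C(Q')$ with $Q' = \langle 1^n, -1\rangle$ on $\mathbb{Q}^{n+1}$. Fix an orthogonal basis $e_1, \ldots, e_{n+2}$ of $V$ with $e_i^2 = Q(e_i)$ equal to $+1$ for $i \leq n$ and $-1$ for $i = n+1, n+2$, and set $f_i := e_i e_{n+2}$ for $i = 1, \ldots, n+1$. A direct check shows that the $f_i$ anticommute and satisfy $f_i^2 = -e_{n+2}^2 e_i^2 = Q(e_i)$, so by a dimension count they generate $C^+(Q)$ and realise the claimed isomorphism. Since $Q'$ is isotropic, the change of variables $u = y_1 + y_{n+1}$, $v = y_1 - y_{n+1}$ yields the orthogonal decomposition $Q' \cong \langle 1, -1\rangle \perp \langle 1^{n-1}\rangle$.

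Next, I would apply the tensor-product formula for Clifford algebras: when $\dim V_1$ is even, one has $C(V_1 \perp V_2) \cong C(V_1) \otimes_{\mathbb{Q}} C(V_2, \eta_1^2 \cdot Q_2)$, where $\eta_1 := e_1 \cdots e_{\dim V_1}$ is the pseudoscalar of $V_1$. The proof is a direct verification that $\eta_1$ anticommutes with $V_1$ while commuting with $V_2$, so that $\eta_1 \cdot V_2$ commutes with $V_1$, followed by a comparison of dimensions. Taking $V_1 = \langle 1,-1\rangle$ (for which $\eta_1^2 = +1$) and using $C(\langle 1, -1\rangle) \cong \operatorname{Mat}_2(\mathbb{Q})$ (the quaternion algebra $(1,-1)/\mathbb{Q}$ is split), the problem reduces to
\[ C^+(Q) \cong \operatorname{Mat}_2(\mathbb{Q}) \otimes_{\mathbb{Q}} C(\langle 1^{4k+1}\rangle). \]

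Finally, I would set up a mod-$8$ induction. Applying the tensor formula with $V_1 = \langle 1^8\rangle$ (for which $\eta_1^2 = 1$) gives the periodicity $C(\langle 1^{m+8}\rangle) \cong C(\langle 1^8\rangle) \otimes_{\mathbb{Q}} C(\langle 1^m\rangle)$. The base cases are $C(\langle 1\rangle) \cong \mathbb{Q}^2$; $C(\langle 1,1\rangle) \cong \operatorname{Mat}_2(\mathbb{Q})$ (the quaternion algebra $(1,1)/\mathbb{Q}$ is split because $3^2+4^2=5^2$); and $C(\langle -1,-1\rangle) \cong \mathbb{H}$. Subsequent applications of the tensor formula then give $C(\langle 1^4\rangle) \cong \operatorname{Mat}_2(\mathbb{Q}) \otimes_\mathbb{Q} \mathbb{H} \cong \operatorname{Mat}_2(\mathbb{H})$, $C(\langle 1^5\rangle) \cong \operatorname{Mat}_2(\mathbb{H})^2$, and $C(\langle 1^8\rangle) \cong \operatorname{Mat}_2(\mathbb{H}) \otimes_{\mathbb{Q}} \operatorname{Mat}_2(\mathbb{H}) \cong \operatorname{Mat}_{16}(\mathbb{Q})$ --- the last isomorphism using the rational fact $\mathbb{H} \otimes_\mathbb{Q} \mathbb{H} \cong \operatorname{Mat}_4(\mathbb{Q})$, which encodes that $\mathbb{H}$ has order $2$ in the Brauer group of $\mathbb{Q}$. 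A routine induction on $k$ then yields
\[ C(\langle 1^{4k+1}\rangle) \cong \begin{cases} \operatorname{Mat}_{2^{2k}}(\mathbb{Q})^2 & k \text{ even}, \\ \operatorname{Mat}_{2^{2k-1}}(\mathbb{H})^2 & k \text{ odd}, \end{cases} \]
and tensoring with $\operatorname{Mat}_2(\mathbb{Q})$ produces the claimed formulas. The principal subtlety throughout is that one genuinely works over $\mathbb{Q}$: the Brauer-theoretic inputs (splitness of $(1,1)$ and $(1,-1)$ over $\mathbb{Q}$, nonsplitness of $\mathbb{H}$, and the order-$2$ relation $\mathbb{H} \otimes_\mathbb{Q} \mathbb{H} \cong \operatorname{Mat}_4(\mathbb{Q})$) are what distinguish the output from its behaviour over $\mathbb{R}$.
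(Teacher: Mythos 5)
Your proof is correct and follows the same route as the one the paper invokes by reference to van Geemen's Theorem 7.7 (reduce $C^+(Q)$ to a full Clifford algebra on one fewer variable, split off hyperbolic/periodicity blocks via the pseudoscalar tensor decomposition, and pin down the base cases). Your Brauer-theoretic bookkeeping over $\mathbb{Q}$ — splitness of $(1,1)$ and $(1,-1)$, nonsplitness of $\mathbb{H}$, and $\mathbb{H}\otimes_{\mathbb{Q}}\mathbb{H}\cong\operatorname{Mat}_4(\mathbb{Q})$ — is precisely the "keeping track of the quaternion algebra $D$" that the paper notes as its only addition to op.~cit., so you have in effect supplied the details that the one-line proof in the paper delegates.
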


\begin{proof}
This is proved exactly as \cite[Theorem 7.7]{vG}, the only difference being that we keep track of the quaternion algebra denoted $D$ in op.~cit.
\end{proof}
Combining the Kuga-Satake construction with the above proposition we obtain a family of examples of abelian varieties $A, B$ for which the two projections $\Hg(A \times B) \to \Hg(A)$ and $\Hg(A \times B) \to \Hg(B)$ have finite kernel:

\begin{theorem}\label{thm_Counterexamples}
Let $k$ be a positive integer. There exist simple, non-isogenous $\mathbb{C}$-abelian varieties $A_+, A_-$ such that
\begin{itemize}
\item $\operatorname{End}^0(A_+) \cong \operatorname{End}^0(A_-) \cong \begin{cases} \mathbb{H}, \mbox{ if } k \mbox{ is odd}; \\ \mathbb{Q}, \mbox{ if } k \mbox{ is even} \end{cases}$
\item $\dim(A_+)=\dim(A_-)=\begin{cases} 2^{2k+1}, \mbox{ if } k \mbox{ is odd}; \\ 2^{2k}, \mbox{ if } k \mbox{ is even} \end{cases}$
\item $\Hg(A_+ \times A_-)$ is absolutely simple. The projections $\Hg(A_+ \times A_-) \to \Hg(A_+)$ and $\Hg(A_+ \times A_-) \to \Hg(A_-)$ have finite kernel of order 2.
\end{itemize}
\end{theorem}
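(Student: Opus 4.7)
My plan is to take $A$ to be the Kuga--Satake abelian variety attached to the polarized weight-$2$ Hodge structure $(V, h, Q)$ constructed above and to produce $A_+, A_-$ as its two simple isogeny factors. By the discussion preceding the theorem, $H_1(A, \mathbb{Q}) \cong C^+(Q)$ as $\MT(A) = \CSpin(Q)$-module (with $\CSpin(Q)$ acting by left multiplication), $\operatorname{End}^0(A) \cong C^+(Q)$, and $\Hg(A) = \Spin(Q)$, the latter being the identity component of $\CSpin(Q) \cap \SL_{C^+(Q)}$. Proposition \ref{prop_KS} decomposes $C^+(Q)$ as $M_r(D) \oplus M_r(D)$ with $(r, D) = (2^{2k}, \mathbb{H})$ for $k$ odd and $(2^{2k+1}, \mathbb{Q})$ for $k$ even; the two central primitive idempotents $e_\pm$ then induce an isogeny decomposition $A \sim A_+^r \times A_-^r$.

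I would next verify the properties of the simple factors. Because $A_+$ and $A_-$ correspond to the two distinct simple central components of $\operatorname{End}^0(A)$, they are automatically non-isogenous and simple. For each, $\operatorname{End}^0(A_\pm) \cong \operatorname{End}_{\Hg(A)}(U_\pm)$, where $U_\pm$ is the simple subobject of the $\Hg(A)$-module $V_\pm := C^+(Q) \cdot e_\pm$ (so that $V_\pm \cong U_\pm^r$). Using the already-recalled identification $\operatorname{End}_{\CSpin(Q)}(C^+(Q)) \cong C^+(Q)$ (right multiplication), this endomorphism algebra works out to $e_\pm C^+(Q) e_\pm \cong M_r(D)$ on $V_\pm$, whence $\operatorname{End}^0(A_\pm) \cong D$. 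The advertised dimensions follow from the identity $2 \dim(A_\pm) \cdot r = \dim_\mathbb{Q} M_r(D) = r^2 \dim_\mathbb{Q} D$.

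For the Hodge-theoretic assertions, Proposition \ref{prop_Products}(2) gives $\Hg(A_+ \times A_-) = \Hg(A_+^r \times A_-^r) = \Hg(A) = \Spin(Q)$, which is absolutely simple because $\dim V = 4k+4 \geq 8$ makes $\Spin(Q)$ a simply connected group of type $D_{2k+2}$ with $2k+2 \geq 4$. The canonical projection $\Hg(A_+ \times A_-) \to \Hg(A_\pm)$ is then the map $\Spin(Q) \to \GL(U_\pm)$ describing the action on $U_\pm$, which after base change to $\overline{\mathbb{Q}}$ realises one of the two half-spin representations of $\Spin_{4k+4}$. Since $2k+2$ is even, the geometric center of $\Spin(Q)$ is $\mu_2 \times \mu_2$, and each half-spin representation is known to have kernel equal to one of the two non-diagonal copies of $\mu_2$, yielding an order-$2$ kernel for each projection.

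The main obstacle will be carefully verifying this last statement about half-spin kernels and its descent to $\mathbb{Q}$: I must match the simple factors $V_\pm$ of $C^+(Q)$ with the two half-spin representations of $\Spin(Q)_{\overline{\mathbb{Q}}}$ and check that each order-$2$ kernel is already defined over $\mathbb{Q}$. The matching is provided by the fact that $\Spin(Q) \hookrightarrow C^+(Q)^\times$ so that $V_\pm$ is automatically a $\Spin(Q)$-module of the predicted type, and the descent is automatic because the kernels in question are characteristic subgroup schemes of the center of $\Spin(Q)$ (they are distinguished by which of the two primitive central idempotents of $C^+(Q)$ they act trivially against).
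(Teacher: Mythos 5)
Your proposal follows essentially the same approach as the paper: apply the Kuga--Satake construction to $(V,Q,h)$ with $n=4k+2$, read off $\operatorname{End}^0(A)\cong C^+(Q)$ and $\Hg(A)\cong\Spin(Q)$, use the decomposition of $C^+(Q)$ into two matrix algebras to split $A\sim A_+^r\times A_-^r$, compute dimensions and endomorphism algebras from the Wedderburn decomposition, and identify the projections $\Hg(A_+\times A_-)\to\Hg(A_\pm)$ with the half-spin representations (kernel of order $2$). The extra worry you raise at the end about descent of the order-$2$ kernels to $\mathbb{Q}$ is a non-issue handled even more directly than you suggest: the modules $C^+(Q)e_\pm$ are defined over $\mathbb{Q}$ because the central idempotents $e_\pm$ lie in $C^+(Q)$ itself, so the kernels of the corresponding homomorphisms of $\mathbb{Q}$-algebraic groups are automatically $\mathbb{Q}$-subgroup schemes, with no need to argue via characteristic subgroups of the center.
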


\begin{proof}
Let $n=4k+2$, choose $V,Q,h$ as above and let $A$ be the $\mathbb{C}$-abelian variety up to isogeny attached to the triple $(V,Q,h)$ by the Kuga-Satake construction (that is, the abelian variety corresponding to the dual Hodge structure of $(C^+(Q),h_s)$). Combining the results above we see that $\dim(A)=2^{n}$ and
\[
\operatorname{End}^0(A) \cong C^+(Q) \cong \begin{cases} \operatorname{Mat}_{2^{2k}}(\mathbb{H}) \oplus \operatorname{Mat}_{2^{2k}}(\mathbb{H}), \mbox{ if } k \mbox{ is odd} \\ \operatorname{Mat}_{2^{2k+1}}(\mathbb{Q}) \oplus \operatorname{Mat}_{2^{2k+1}}(\mathbb{Q}),\mbox{ if } k \mbox{ is even}\end{cases}
\]
For the sake of brevity let $m=2k$ (resp.~$2k+1$) if $k$ is odd (resp.~even). From the description of its endomorphism algebra we obtain that $A$ is isogenous to $(A_+ \times A_-)^{2^m}$ for simple, non-isogenous Abelian varieties $A_+, A_-$ whose endomorphism algebra is $\mathbb{H}$ or $\mathbb{Q}$, depending on the parity of $k$. Moreover,
\[
\dim(A_+)=\dim(A_-)=\frac{1}{2} \frac{\dim(A)}{2^m} = 2^{n-m-1} = \begin{cases} 2^{2k+1}, \mbox{ if } k \mbox{ is odd} \\ 2^{2k}, \mbox{ if } k \mbox{ is even} \end{cases},
\]
because the decomposition $A_+ \times A_-$ comes from the analogous decomposition of the spin representation as a direct sum of the two non-isomorphic half spin representations \cite[p.~305]{FH}.
Note now that $\MT(A)$ is isomorphic to $\CSpin(Q)$, so $\Hg(A) \cong \Spin(Q)$ is absolutely simple (over the complex numbers, the group $\Spin(n+2)$ is simple as soon as $n+2>4$). The kernels of the surjective projections 
\[
\Hg \left( A_+ \times A_- \right) \to \Hg(A_+), \quad \Hg \left( A_+ \times A_- \right) \to \Hg(A_-)
\]
must therefore be finite as claimed. More precisely, from the discussion above we see that these kernels may be identified with the kernels of the half-spin representations of $\Spin(Q)$, hence they have order 2 \cite[III.6.1]{MR1636473}.
\end{proof}

\medskip

The above complex construction leads to the following version over number fields:
\begin{theorem}\label{thm_CounterHl}
Let $h$ be a positive integer. There exist geometrically simple abelian varieties $A_+, A_-$, defined over a number field $K$, non-isogenous over $\overline{K}$, and such that
\begin{itemize}
\item $\operatorname{End}_{\overline{K}}(A_+) \otimes \cong \operatorname{End}_{\overline{K}}(A_-) \otimes  \cong \mathbb{Z}$;
\item $\dim(A_+)=\dim(A_-)=2^{4h}$;
\item the Mumford-Tate conjecture holds for $A_+, A_-$ and $A_+ \times A_-$;
\item for every rational prime $\ell$ the group $H_\ell(A_+ \times A_-)$ is a double cover (via the canonical projections) of both $H_\ell(A_+)$ and $H_\ell(A_-)$.
\end{itemize}
\end{theorem}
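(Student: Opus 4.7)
The plan is to combine the complex construction of Theorem \ref{thm_Counterexamples} with the descent result Proposition \ref{prop:DescentToNumberFields}, specialised to the even-$k$ case in order to obtain trivial geometric endomorphism rings.

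First, I would apply Theorem \ref{thm_Counterexamples} with $k := 2h$, which is even. This yields simple, non-isogenous complex abelian varieties $A_+^{\mathbb{C}}, A_-^{\mathbb{C}}$ of dimension $2^{2k} = 2^{4h}$, with $\operatorname{End}^0(A_\pm^{\mathbb{C}}) \cong \mathbb{Q}$, and such that the canonical projections $\Hg(A_+^{\mathbb{C}} \times A_-^{\mathbb{C}}) \to \Hg(A_\pm^{\mathbb{C}})$ are surjective with kernel of order $2$. Applying Proposition \ref{prop:DescentToNumberFields} to the pair $(A_+^{\mathbb{C}}, A_-^{\mathbb{C}})$ then produces abelian varieties $A_+, A_-$ defined over a common number field $K$, with the same dimensions, endomorphism algebras, and Hodge groups as the complex originals, and for which the Mumford-Tate conjecture holds both individually and for the product.

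It remains to verify the listed properties. The equality $\operatorname{End}^0_{\overline{K}}(A_\pm) \cong \mathbb{Q}$ forces $\operatorname{End}_{\overline{K}}(A_\pm) = \mathbb{Z}$, since $\mathbb{Z}$ is the unique order in $\mathbb{Q}$. Geometric simplicity of $A_\pm$ follows from the fact that a nontrivial isogeny decomposition would produce orthogonal idempotents in the geometric endomorphism algebra, which is a field. Similarly, $A_+$ and $A_-$ are not geometrically isogenous, since $\operatorname{End}^0_{\overline{K}}(A_+ \times A_-) \cong \mathbb{Q} \oplus \mathbb{Q}$ contains no subalgebra isomorphic to $\operatorname{Mat}_2(\mathbb{Q})$. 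For the $H_\ell$ condition, the Mumford-Tate conjecture gives $H_\ell(X) \cong \Hg(X) \times_{\mathbb{Q}} \mathbb{Q}_\ell$ for each $X \in \{A_+, A_-, A_+ \times A_-\}$, so the projections $H_\ell(A_+ \times A_-) \to H_\ell(A_\pm)$ are the base change to $\mathbb{Q}_\ell$ of the corresponding projections of Hodge groups, and inherit their finite kernel of order $2$.

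The main obstacle is already dispatched upstream: it is Proposition \ref{prop:DescentToNumberFields}, which rests on the Serre-Noot descent theorem (Theorem \ref{thm_SerreNoot}) to produce a number-field model with prescribed Mumford-Tate data. Once that is in hand, the proof of Theorem \ref{thm_CounterHl} is essentially a bookkeeping exercise combining the complex Kuga-Satake computation with the descent. The only subtle point is to observe that the isomorphism of Mumford-Tate groups produced by the descent is compatible with the product decomposition; this follows from the proof of Proposition \ref{prop:DescentToNumberFields}, where the rational cohomology is transported as a representation of the Mumford-Tate group, so that the idempotents cutting out the two factors correspond to one another.
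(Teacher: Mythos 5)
Your proof is correct and follows essentially the same route as the paper: apply Theorem \ref{thm_Counterexamples} with $k=2h$ to get the complex Kuga--Satake pair, then transport to a number field via Proposition \ref{prop:DescentToNumberFields}, and conclude the $H_\ell$ statement by base-changing the Hodge-group projections under the Mumford--Tate conjecture. You spell out some verifications the paper leaves implicit (geometric simplicity, non-isogeny, $\operatorname{End}=\mathbb{Z}$, compatibility of the descent with the product decomposition), all of which are handled correctly and are indeed justified by the proof of Proposition \ref{prop:DescentToNumberFields}.
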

\begin{proof}
Let $B_+, B_-$ be the abelian varieties whose existence is guaranteed by Theorem \ref{thm_Counterexamples} with $k=2h$. By Proposition \ref{prop:DescentToNumberFields}, there exist abelian varieties $A_+, A_-$, defined over a number field $K$, with the same Mumford-Tate groups as $B_+, B_-$ and such that the Mumford-Tate conjecture holds for $A_+, A_-$ and $A_+ \times A_-$. The morphism $H_\ell(A_+ \times A_-) \to H_\ell(A_+)$ may then be identified with the base-change to $\mathbb{Q}_\ell$ of the corresponding morphism $\Hg(A_+ \times A_-) \to \Hg(A_+)$, and similarly for $A_-$. As discussed in the proof of Theorem \ref{thm_Counterexamples}, this morphisms are precisely the half-spin representations of a certain Spin group and have kernel of order 2.
\end{proof}

The abelian varieties of the previous theorem are almost-examples of the situation considered in condition \eqref{eq:Question}. For simplicity we only study the case $d=\ell^\infty$, but the result could also be adapted to finite values of $d$.
\begin{corollary}\label{cor:AlmostStronglyIsokummerian}
Let $A_+, A_-$ be as in the previous theorem and write $A_0$ for the abelian variety $A_+ \times A_-$. There exists a finite extension $K'$ of $K$ such that for every prime $\ell$ we have
\[
[ K'(A_0[\ell^\infty]) : K'(A_+[\ell^\infty]) ] \leq 2, \quad [ K'(A_0[\ell^\infty]) : K'(A_-[\ell^\infty]) ] \leq 2.
\]
\end{corollary}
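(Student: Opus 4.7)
The plan is to translate the statement from the topology of Galois representations into algebraic group theory, using the fact that $H_\ell(A_0) \to H_\ell(A_\pm)$ is an isogeny of order $2$ (from Theorem \ref{thm_CounterHl}), and then to read off the corresponding inequality of field degrees via Galois theory.

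First, I would choose the finite extension $K'$ of $K$ so that for every prime $\ell$ the Zariski closure of the image of $\operatorname{Gal}(\overline{K'}/K')$ on $T_\ell(A_0)$ is connected. Exactly as in the proof of Proposition \ref{prop:MTIsomorphic}, this is possible by \cite[p.~17]{MR1730973} (or \cite[Proposition 6.14]{MR1150604}) applied to the abelian variety $A_0=A_+ \times A_-$. After this extension, since the Mumford-Tate conjecture holds for $A_0, A_+, A_-$ by Theorem \ref{thm_CounterHl}, the $\ell$-adic Galois representations attached to each of $A_0, A_+, A_-$ over $K'$ land in the $\mathbb{Q}_\ell$-points of the respective Mumford-Tate groups, and $G_\ell(A_0)=\MT(A_0)_{\mathbb{Q}_\ell}$, $G_\ell(A_\pm)=\MT(A_\pm)_{\mathbb{Q}_\ell}$.

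Next, let $\pi_\pm : \MT(A_0) \to \MT(A_\pm)$ be the canonical projection. Since $\MT(A) = \Hg(A) \cdot \mathbb{G}_m$ with the $\mathbb{G}_m$-factor mapping isomorphically under the projection (it acts as homotheties on both sides), the kernel of $\pi_\pm$ coincides with the kernel of the induced morphism $\Hg(A_0) \to \Hg(A_\pm)$, which has order $2$ by Theorem \ref{thm_CounterHl}. In particular, on $\mathbb{Q}_\ell$-points we have $\#\ker(\pi_\pm)(\mathbb{Q}_\ell) \leq 2$.

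Finally, a diagram chase closes the argument. By construction $\rho_{A_\pm,\ell^\infty} = \pi_\pm \circ \rho_{A_0,\ell^\infty}$, so $\ker \rho_{A_0,\ell^\infty} \subseteq \ker \rho_{A_\pm,\ell^\infty}$, and by Galois theory
\[
K'(A_\pm[\ell^\infty]) \subseteq K'(A_0[\ell^\infty]), \qquad [K'(A_0[\ell^\infty]):K'(A_\pm[\ell^\infty])] = [\ker \rho_{A_\pm,\ell^\infty} : \ker \rho_{A_0,\ell^\infty}].
\]
The first isomorphism theorem applied to $\rho_{A_0,\ell^\infty}$ identifies this last index with the order of $\rho_{A_0,\ell^\infty}\bigl(\ker \rho_{A_\pm,\ell^\infty}\bigr) = G_{A_0,\ell^\infty} \cap \ker(\pi_\pm)(\mathbb{Q}_\ell)$, which is bounded by $\#\ker(\pi_\pm)(\mathbb{Q}_\ell) \leq 2$. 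No step should be seriously obstructed; the only genuinely substantive input is the connectedness of the $\ell$-adic monodromy groups after a finite base change, which is taken care of uniformly in $\ell$ by the result cited above and is precisely what forces us to pass from $K$ to $K'$.
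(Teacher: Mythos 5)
Your overall strategy matches the paper's, but there is a genuine gap in the step where you identify $\ker(\pi_\pm : \MT(A_0) \to \MT(A_\pm))$ with $\ker(\Hg(A_0) \to \Hg(A_\pm))$. Your justification is that $\MT(A) = \Hg(A) \cdot \mathbb{G}_m$ with the $\mathbb{G}_m$-factor mapping isomorphically, but $\MT(A_0)$ is only an \emph{almost}-direct product of $\Hg(A_0)$ and $\mathbb{G}_m$, so this alone does not force the kernel to lie inside $\Hg(A_0)$. Concretely, if $\lambda g \in \ker(\pi_\pm)$ with $\lambda \in \mathbb{G}_m$ and $g \in \Hg(A_0)$, then $\pi_\pm(g) = \lambda^{-1}$ is a scalar in $\Hg(A_\pm)$; a priori $\Hg(A_\pm) \cap \mathbb{G}_m$ could contain scalars other than $\pm 1$ (the only constraint from $\Hg \subset \SL$ is that such scalars are roots of unity of order dividing $2 \dim A_\pm$), in which case $\lambda g \notin \Hg(A_0)$ and the kernel of $\pi_\pm$ would be strictly larger than the one you computed.

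The missing ingredient is exactly what Lemma~\ref{lemma:kerCSpin} supplies: the half-spin representations of $\Spin(4m)$ are \emph{orthogonal}, so the image of $\Hg(A_0) \cong \Spin(Q)$ under $\pi_\pm$ lands in an orthogonal group, whose only scalar elements are $\pm\operatorname{Id}$. This forces $\lambda = \pm 1$, hence $\lambda g \in \Spin(Q) = \Hg(A_0)$, and only then does one get $\ker(\pi_\pm) = \ker(\Hg(A_0) \to \Hg(A_\pm))$ of order 2. Once you insert this observation (or simply cite Lemma~\ref{lemma:kerCSpin}), the rest of your diagram chase — the inclusion of torsion fields and the identification of the index with the order of $G_{A_0,\ell^\infty} \cap \ker(\pi_\pm)(\mathbb{Q}_\ell)$ — is correct and agrees with the paper's argument. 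The minor difference in how you choose $K'$ (connectedness of the $\ell$-adic monodromy groups, as in Proposition~\ref{prop:MTIsomorphic}, versus the paper's direct citation for the Galois image landing in $\MT(\mathbb{Q}_\ell)$) is harmless given that the Mumford-Tate conjecture holds here.
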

\begin{proof}
Up to replacing $K$ with a finite extension $K'$, we may assume that for each prime $\ell$ the image of the $\ell$-adic Galois representation attached to $A_+/K'$ (respectively $A_0/K'$) lands inside the $\mathbb{Q}_\ell$-points of $\MT(A_+)$ (respectively $\MT(A_0)$), see \cite[133]{MR3185222} or \cite{MR1441234}.
Let $K_{0} := K'(A_0[\ell^\infty])$ and $K_{+} := K'(A_+[\ell^\infty])$. Also let $\rho_{0, \ell^\infty}$ and $\rho_{+, \ell^\infty}$ be the $\ell$-adic Galois representations attached to $A_0/K'$ and $A_+/K'$ respectively, with images $G_{0, \ell^\infty}$ and $G_{+, \ell^\infty}$.
The extension $K_0 / K_+$ is Galois, with Galois group isomorphic to the kernel of the natural surjective map $G_{0, \ell^\infty} \to G_{+, \ell^\infty}$. By construction we have $G_{0, \ell^\infty} \subseteq G_\ell(A_0)(\mathbb{Q}_\ell)$ and $G_{+, \ell^\infty} \subseteq G_\ell(A_+)(\mathbb{Q}_\ell)$, and the projection map corresponds to the algebraic map $G_\ell(A_0) \to G_\ell(A_+)$. Since by Lemma \ref{lemma:kerCSpin} below the kernel of this map has order 2 (even over $\overline{\mathbb{Q}_\ell}$; note that we are working with the Spin group $\Spin(n+2)=\Spin(4h+4)$), the projection $G_{0, \ell^\infty} \to G_{+, \ell^\infty}$ has kernel of order at most 2, as claimed. The proof for $A_-$ is identical.
\end{proof}

\begin{lemma}\label{lemma:kerCSpin}
Let $F$ be an algebraically closed field of characteristic different from 2. Consider the $F$-algebraic groups $\operatorname{Spin}(4m)$ and $\operatorname{CSpin}(4m)$, defined from any non-degenerate quadratic form $Q$ on $F^{4m}$. The half-spin representations $\rho_{\pm}$ of $\Spin(4m)$ extend to representations of $\CSpin(4m) = \mathbb{G}_m \cdot \operatorname{Spin}(4m)$ by setting $\rho_{\pm}(\lambda g) = \lambda \rho_{\pm}(g)$. The kernels of these extended representations have order 2.
\end{lemma}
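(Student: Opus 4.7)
The plan is to exploit the almost-direct product structure $\CSpin(4m) = \mathbb{G}_m \cdot \Spin(4m)$, in which the intersection $\mathbb{G}_m \cap \Spin(4m) = \{\pm 1\}$ is the kernel of the double cover $\Spin(4m) \to \operatorname{SO}(4m)$, and then reduce the question to the well-understood kernels of the half-spin representations of $\Spin(4m)$. First I would verify that the prescribed rule $\rho_\pm(\lambda g) = \lambda \rho_\pm(g)$ genuinely defines representations of $\CSpin(4m)$: this amounts to checking that $\rho_\pm(-1) = -I$, where $-1$ denotes the nontrivial element of $\ker(\Spin \to \operatorname{SO})$. This is immediate because $-1 \in \Spin(4m) \subset C^+(Q)^\times$ acts on the spinor module $C^+(Q)$ by left multiplication by $-1$, and this action obviously preserves the half-spin summands.

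Next I would compute the kernel of the extended representation. If $[\lambda, g] \in \ker \rho_\pm$, then $\rho_\pm(g) = \lambda^{-1} I$ is a scalar; since each half-spin representation of $\Spin(4m)$ is irreducible (as $4m \geq 4$), Schur's lemma forces $g$ to lie in the centre $Z(\Spin(4m)) \cong \mathbb{Z}/2 \times \mathbb{Z}/2$, and the scalar $\lambda^{-1}$ must therefore equal $\pm 1$.

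The core input is then the classical structure of $Z(\Spin(4m))$ and of its action on the two half-spin representations: the centre has four elements, exactly two of which act trivially on $\rho_+$ (the other two acting as $-I$), and symmetrically for $\rho_-$. This is essentially the content of \cite[III.6.1]{MR1636473}. Writing the kernel of $\rho_\pm|_{\Spin(4m)}$ as $\{1, z\}$ and the remaining central elements (which act as $-I$) as $\{-1, -z\}$, the pairs $(\lambda, g) \in \mathbb{G}_m \times \Spin(4m)$ that yield elements of $\ker \rho_\pm$ on $\CSpin(4m)$ are exactly
\[
(1,1),\quad (1,z),\quad (-1,-1),\quad (-1,-z),
\]
but the $\mu_2$-identification $(\lambda, g) \sim (-\lambda, -g)$ defining $\CSpin$ collapses these four pairs into two classes, yielding a kernel of order exactly $2$, as claimed.

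The main obstacle, if any, lies in pinning down the precise action of the two ``extra'' central elements of $\Spin(4m)$ on each half-spin representation — i.e.\ in explaining why $-1$ acts as $-I$ on both $\rho_+$ and $\rho_-$ while the other central generator acts as $+I$ on one and $-I$ on the other. I would either quote this directly from \cite{MR1636473} or derive it from the explicit realisation of $\rho_\pm$ through the two central idempotents of $C^+(Q)$ that implement the decomposition of $C^+(Q)$ into a product of two simple algebras (as in Proposition \ref{prop_KS} and its source in \cite{vG}).
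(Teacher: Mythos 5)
Your proof is correct for the case relevant to the paper ($m\geq 2$), but it takes a genuinely different route than the author's, and there are two points worth flagging.

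The paper's argument avoids the centre of $\Spin(4m)$ entirely. It observes that the half-spin representations of $\Spin(4m)$ are self-dual (the author says ``orthogonal'', citing \cite[Table 2.1]{MR3494170}; for some $m$ they are in fact symplectic, but the argument is insensitive to this), so for $g\in\Spin(4m)$ the matrix $\rho_\pm(g)$ preserves a non-degenerate bilinear form; the only scalar matrices doing so are $\pm\operatorname{Id}$. Hence $\rho_\pm(\lambda g)=\operatorname{Id}$ forces $\lambda=\pm 1$, so $\lambda g\in\Spin(4m)$, and the kernel of the extended representation coincides with that of $\rho_\pm\vert_{\Spin(4m)}$, which has order $2$. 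No inventory of $Z(\Spin(4m))\cong\mathbb{Z}/2\times\mathbb{Z}/2$ and its action on $S^\pm$ is needed. Your approach gets the same result by first reducing to the centre and then enumerating; it is more explicit but longer. In fact, once you have $\lambda=\pm 1$, you could short-circuit your final enumeration exactly as the paper does: $\lambda g\in\Spin(4m)$, and the kernel on $\CSpin$ equals the kernel on $\Spin$.

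Two cautions. First, your phrase ``Schur's lemma forces $g$ to lie in the centre'' attributes too much to Schur's lemma alone: irreducibility does not by itself imply that the preimage of the scalars is central. You are additionally using that $\Spin(4m)$ is almost simple, so a closed normal subgroup properly containing the centre would be the whole group, which irreducibility then excludes. This fails for $m=1$: $\Spin(4)\cong\SL_2\times\SL_2$, the half-spin representations are projections to the factors, their kernels are the complementary $\SL_2$'s (certainly not of order $2$), and the conclusion of the lemma is false. The paper's proof has the same implicit restriction (it also asserts that $\ker\rho_\pm\vert_{\Spin(4m)}$ has order $2$), and the application in Corollary \ref{cor:AlmostStronglyIsokummerian} only uses $\Spin(4h+4)$ with $h\geq 1$, i.e.\ $m\geq 2$, so nothing breaks; but since you invoke the centre explicitly you should make the hypothesis $m\geq 2$ (or at least $4m\geq 8$) visible, rather than writing ``as $4m\geq 4$''. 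Second, you should justify briefly why the scalar $\lambda^{-1}$ is $\pm 1$: it is because the central element $g$ has order dividing $2$, so $\rho_\pm(g)^2=\operatorname{Id}$; this is implicit in your write-up but worth a half-sentence.
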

\begin{proof}
The fact that the half-spin representations extend is clear from their construction (see Equation \eqref{eq:CSpinRep}). It is well-known that the half-spin representations of $\Spin(4m)$ are orthogonal (see for example \cite[Table 2.1]{MR3494170}), so for $g \in \Spin(4m)$ the matrix $\rho_{\pm}(g)$ is a multiple of the identity if and only if it is $\pm \operatorname{Id}$ (the only orthogonal matrices that are also scalars are $\pm \operatorname{Id}$). Hence if $\rho_{\pm}(\lambda g) = \lambda \rho_{\pm}(g)$ is trivial for some $\lambda g \in \CSpin(4m)$ we must have $\lambda = \pm 1$, so $\lambda g$ is in $\Spin(4m)$. It follows that the kernels of the extended representations coincide with the kernels of the original representations of $\Spin(4m)$, which have order 2.
\end{proof}

\begin{remark}\label{rmk:ParityObstruction}
The example of Corollary \ref{cor:AlmostStronglyIsokummerian} is motivated on the one hand by the condition appearing in Lemma \ref{lemma:IsoKummerianImplesSameHodgeDimension}, and on the other by an examination of Table 2.1 in \cite{MR3494170}: the two half-spin representations of a Lie algebra of type $D_l$ provide the simplest example for which the criterion of \cite[Theorem 1.1]{MR3494170} fails. However, in order for $H_\ell(A_+ \times A_-) \to H_\ell(A_+)$ to be an isomorphism, one would need the half-spin representations to be faithful, which happens only if $l$ is odd (the present $l$ is connected to the parameter $n$ in the construction above by $l = (n+2)/2$). On the other hand, when $l$ is odd \cite[Table 2.1]{MR3494170} shows that the half-spin representations are not self-dual, complicating the construction of corresponding polarised Hodge structures.
\end{remark}

\bibliographystyle{abbrv}
\bibliography{Biblio}
\end{document}